\documentclass[a4paper,12pt]{article}


\usepackage{bbm}
\usepackage{amsmath, wrapfig}
\usepackage{dsfont, a4wide, amsthm, amssymb, amsfonts, graphicx}
\usepackage{fancyhdr, xspace, psfrag, setspace, supertabular, color, hyperref}

\setcounter{MaxMatrixCols}{20}

\newtheorem{theorem}{Theorem}[section]
\newtheorem{proposition}[theorem]{Proposition}
\newtheorem{lemma}[theorem]{Lemma}

\newtheorem{corollary}[theorem]{Corollary}
\newtheorem{conjecture}[theorem]{Conjecture}
\newtheorem{definition}[theorem]{Definition}

\newtheorem{question}[theorem]{Question}

\theoremstyle{remark}
\newtheorem*{remark}{Remark}

\def\a{\alpha}
\def\d{\delta}
\def\e{\epsilon}

\def\Z{\mathbb{Z}}
\def\F{\mathbb{F}}

\def\R{\mathbb{R}}
\def\C{\mathbb{C}}
\def\E{\mathbb{E}}
\def\P{\mathbb{P}}
\def\N{\mathbb {N}}

\DeclareMathOperator{\rk}{rk}
\DeclareMathOperator{\erk}{erk}

\DeclareMathOperator{\tr}{tr}

\DeclareMathOperator{\pr}{pr}
\DeclareMathOperator{\epr}{epr}
\DeclareMathOperator{\dpr}{dpr}

\DeclareMathOperator{\ar}{ar}
\DeclareMathOperator{\bias}{bias}

\DeclareMathOperator{\supp}{supp}

\newcommand{\new}{\mathrm{new}}

\title{Equidistribution of high-rank polynomials with variables restricted to subsets of $\F_p$}
\author{W. T. Gowers \footnote{Coll\`ege de France and Department of Pure Mathematics and Mathematical Statistics, University of Cambridge. Email: \texttt{wtg10@dpmms.cam.ac.uk}.} \and Thomas Karam \footnote{Department of Pure Mathematics and Mathematical Statistics, University of Cambridge. Email: \texttt{tk542@cam.ac.uk}.}}

\begin{document}
\maketitle

\begin{abstract}

Let $p$ be a prime and let $S$ be a non-empty subset of $\F_p$. Generalizing a result of Green and Tao on the equidistribution of high-rank polynomials over finite fields, we show that if $P: \F_p^n \rightarrow \F_p$ is a polynomial and its restriction to $S^n$ does not take each value with approximately the same frequency, then there exists a polynomial $P_0: \F_p^n \rightarrow \F_p$ that vanishes on $S^n$, such that the polynomial $P-P_0$ has bounded rank. Our argument uses two black boxes: that a tensor with high partition rank has high analytic rank and that a tensor with high essential partition rank has high disjoint partition rank.

\end{abstract}

\tableofcontents

\section{Introduction}

Our starting point in this paper is the following theorem of Green and Tao \cite[Theorem 1.7]{Green and Tao}, which broadly speaking states that if a multivariable polynomial over $\F_p$ does not take each value with approximately the same frequency, then it can be expressed in terms of a bounded number of polynomials of lower degree. We write $\omega_p$ for $\exp(2\pi i/p)$.

\begin{theorem}\label{Theorem of Green and Tao} Let $p \ge 2$ be a prime and let $d$ be an integer such that $0 < d < p$. Then there exists a function $B_{p, d}: (0, 1 \rbrack \rightarrow \lbrack 0, \infty )$ such that for every $\epsilon>0$, if $P: \F_p^n \rightarrow \F_p$ is a polynomial with degree $d$ such that \begin{equation} |\E_{x \in \F_p^n} \omega_p^{P(x)}| \ge \epsilon \label{high bias of a polynomial} \end{equation} then there exists $k \le B_{p,d}(\epsilon)$, polynomials $P_1, \dots, P_k: \F_p^n \rightarrow \F_p$ each with degree at most $d-1$ and a function $F: \F_p^k \rightarrow \F_p$ such that $P = F(P_1, \dots, P_k)$. \end{theorem}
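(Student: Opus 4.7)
I would prove this by induction on the degree $d$. The base case $d = 1$ is immediate, since a nonconstant affine polynomial on $\F_p^n$ has zero bias; thus the assumption forces $P$ to be constant and the decomposition is trivial.

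For the inductive step, suppose the result holds for all degrees below $d$ (with $1 < d < p$) and let $P$ have degree $d$ and satisfy $|\E_x \omega_p^{P(x)}| \ge \e$. Squaring and applying Cauchy--Schwarz,
\[ \e^2 \le \bigl|\E_x \omega_p^{P(x)}\bigr|^2 = \E_h \E_x \omega_p^{P(x+h) - P(x)}. \]
Writing $\Delta_h P(x) = P(x+h) - P(x)$, which has degree at most $d - 1$ in $x$, a routine averaging produces a set $H \subseteq \F_p^n$ of density at least $\e^2/2$ such that each $\Delta_h P$ with $h \in H$ has bias at least $\e^2/2$. The inductive hypothesis then writes each such $\Delta_h P$ in the form $F_h(Q_{h,1}, \dots, Q_{h,k_h})$ with $k_h \le B_{p, d-1}(\e^2/2)$ and every $Q_{h,i}$ of degree at most $d - 2$.

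The main obstacle — and the heart of the proof — is to convert this family of \emph{per-direction} decompositions into a single \emph{global} decomposition of $P$. The standard way to do this is a regularity / energy-increment argument: build a finite list $\mathcal R$ of polynomials of degree at most $d - 1$, stage by stage, and at each stage test whether $P$ is already expressible as a function of $\mathcal R$. If not, the derivative argument above supplies a direction $h$ whose $\Delta_h P$ is low-rank, and one enlarges $\mathcal R$ by polynomials of degree at most $d - 1$ that ``antidifferentiate'' the low-rank pieces of $\Delta_h P$, raising the associated energy by a definite amount. Since the energy is bounded by $1$, the procedure terminates after a bounded number of steps, leaving $P$ as a function of at most $B_{p,d}(\e)$ polynomials of degree at most $d - 1$.

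The delicate point — and the reason for the hypothesis $d < p$ — is the antidifferentiation: recovering a polynomial $R$ of degree $d - 1$ with $\Delta_h R = Q$ for a prescribed $Q$ of degree $d - 2$ requires dividing by $(d-1)!$, and choosing $R$ \emph{coherently} as $h$ varies is equivalent to a low-rank decomposition of a symmetric multilinear tensor attached to $P$. It is exactly this coherence that a modern argument can extract from the partition-rank-to-analytic-rank black box cited in the abstract, but any proof along the above induction must contend with essentially this same issue.
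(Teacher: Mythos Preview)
This theorem is quoted in the paper as a result of Green and Tao rather than proved from scratch, but the introduction does sketch (following Janzer and Mili\'cevi\'c) how to deduce it from the partition-rank black box, Theorem~\ref{bounded analytic rank implies bounded partition rank}. That route is much shorter than your plan: bound $|\bias P|$ by the Gowers $U^d$-norm of $\omega_p^P$; for a degree-$d$ polynomial this norm is governed by $\bias m$, where $m$ is the symmetric $d$-linear form satisfying $P(x) = (d!)^{-1} m(x,\dots,x) + (\text{lower degree})$; conclude that $m$ has bounded analytic rank; apply the black box to bound $\pr m$; and read off a decomposition of $P$. No induction on $d$, no regularity loop, no per-direction antidifferentiation --- the polarisation identity (valid because $d<p$) handles all of the ``coherence'' in one stroke.

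Your plan is closer in spirit to Green and Tao's original argument, and that style of argument can be made to work, but what you have written has a real gap at the regularity step. You propose: test whether $P$ is already a function of the current list $\mathcal R$; if not, ``the derivative argument above supplies a direction $h$ whose $\Delta_h P$ is low-rank'', and one enlarges $\mathcal R$ by antiderivatives. But the derivative argument produces a dense set of good $h$ once and for all, independently of $\mathcal R$; nothing in your sketch connects the failure of $P$ to be a function of $\mathcal R$ to the existence of a \emph{new} useful direction, nor have you defined the energy or explained why adjoining antiderivatives of the low-rank pieces of a single $\Delta_h P$ increases it by a definite amount. You correctly flag the coherence problem as the crux and correctly note that the tensor black box resolves it --- but then the honest move is to take the black-box route directly, as the paper does, rather than embed it inside an inductive shell that it renders unnecessary.
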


The assumption $d<p$ was later removed \cite{Kaufman and Lovett} by Kaufman and Lovett. The quantity $\E_{x \in \F_p^n} \omega_p^{P(x)}$ from Theorem \ref{Theorem of Green and Tao} is often referred to as the \emph{bias} of the polynomial $P$, and the smallest possible nonnegative integer $k$ in its conclusion is often called the \emph{rank} of $P$. 

Note that for any $t\in\F_p$ and any function $f:\F_p^n\to\F_p$,
\[\P[f(x)=t]=\E_{x\in\F_p^n}\mathbbm 1_{f(x)=t}=\E_{s\in\F_p}\E_{x\in\F_p^n}\omega_p^{s(f(x)-t)}=p^{-1}+\E_{s\ne 0}\E_{x\in\F_p^n}\omega_p^{s(f(x)-t)}.\]
It follows that if some value is taken by $f$ with a probability that differs from $p^{-1}$ by at least $\d$, then there exists $s\ne 0$ such that $|\E_{x\in\F_p^n}\omega^{sf(x)}|\geq\d$. In particular, if the values of a polynomial $P$ are not approximately uniformly distributed, then some non-zero multiple of $P$ has large bias.

As we shall discuss in more detail later in this introduction, it has subsequently been shown that the function $F$ that appears in the statement of Theorem \ref{Theorem of Green and Tao} can be taken to be of a particular form: under the assumptions of the theorem, there are polynomials $Q_1,\dots,Q_k$ and $R_1,\dots,R_k$ of degree at most $d-1$ such that $P=Q_1R_1+\dots+Q_kR_k$. 

Our aim in this paper is to generalize Theorem \ref{Theorem of Green and Tao} to a statement concerning a restricted alphabet. Let $S$ be a proper subset of $\F_p$. We shall say that $P:\F_p^n\to\F_p$ is biased on $S^n$ if it does not take each value with approximately the same frequency. We wish to formulate and prove a statement to the effect that if $P$ is biased on $S^n$, then it has low rank on $S^n$.

Proving results for restricted alphabets is already a mini-theme in additive combinatorics. For example, Bourgain, Dilworth, Ford, Konyagin and Kutzarova have proved a lower bound \cite[Theorem 5]{BDFKK} for the size of the sumset $A+B$ in $\Z^n$ if $A$ and $B$ are both subsets of $\{0,1,\dots,M-1\}^n$ (which has obvious consequences for subsets of $S^n\subset\F_p^n$ if $S$ is Freiman isomorphic to a subset of $\Z$). There has also been work \cite{DGIM, Kane and Tao} on minimizing the additive energy of a subset of $\{0,1\}^n$ of given density. 

For us the motivation for considering restricted alphabets came from density Hales-Jewett type problems. We shall not explain it in full here, but suppose, for example, that one wishes to find conditions on three sets $A,B,C\subset\{0,1,2\}^n$ that will ensure that there is a combinatorial line $(x,y,z)$ with $x\in A, y\in B$ and $z\in C$. (This is a set of points such that for each coordinate $i$, the triple $(x_i,y_i,z_i)$ is equal to $(0,0,0), (1,1,1), (2,2,2)$, or $(0,1,2)$.) If we write $|x|$ for the sum of the coordinates of $x$ (in $\Z$), then we see that $|x|, |y|$ and $|z|$ form an arithmetic progression, so we can define sets such as $A=B=\{x:|x|\equiv 0\ (\text{mod}\ 5)\}$ and $C=\{z:|z|\equiv 1\ \text{mod}\ 5\}$. In this way, it becomes natural to regard $\{0,1,2\}$ as a subset of $\F_5$ and consider the restriction of the linear form $|x|$ to $\{0,1,2\}^n$. And for longer combinatorial lines it becomes natural in a similar way to look at restrictions of polynomials of higher degree. 

In particular, one can use polynomials over $\F_p$ to define ``Bohr-like" sets on $\{0,1,\dots,k-1\}^n$. First, one regards $\{0,1,\dots,k-1\}$ as a subset of $\F_p$. Then given polynomial functions $P_1,\dots,P_r:\F_p^n\to\F_p^t$ for some $t$, and subsets $E_1,\dots,E_r$ of $\F_p^t$, one can define a set $B((P_1,E_1),\dots,(P_r,E_r))$ to be the set of all $x\in\{0,1,\dots,k-1\}^n$ such that $P_i(x)\in E_i$ for each $i$. Let us (just for this paragraph) call this an \emph{$\F_p$-polynomial Bohr set in $[k]^n$ with parameters} $(t,r)$. In a separate paper \cite{Gowers and K.}, we shall use the results of this paper to prove that every $\F_p$-polynomial Bohr set in $[k]^n$ with parameters $(t,r)$ can be approximated to within a density $\d$ by an $\F_p$-polynomial Bohr set in $[k]^n$ with parameters $(t',r')$, where $t'$ and $r'$ are bounded above by functions of $\d, p, k$ and $t$. (The main point here is that the upper bounds for $t'$ and $r'$ do not depend on $r$, so if a dense set is defined by a large number of polynomial conditions, then it can be approximated by a set defined by a bounded number of polynomial conditions.)

To understand the formulation of our main result, it will help to have examples such as the following in mind. Let $P(x)=\prod_{i=1}^n x_i(x_i-1)$. Then $P$, considered as a polynomial on $\F_p^n$, has very high rank and is consequently very close to being equidistributed. However, its restriction to $\{0,1\}^n$ is identically zero, so is as unevenly distributed as possible. We would like to be able to say that this restriction has low rank in some sense. 

We do this in a fairly obvious way: we say that $P$ has low rank on $S^n$ if there is a low-rank polynomial $Q$ on $\F_p^n$ such that $P-Q$ vanishes on $S^n$. For instance, in the example above, we can simply take $Q$ to be the zero polynomial. 

We now make this precise. We start by defining appropriate versions of bias and rank in the context of restricted alphabets. 

\begin{definition} \label{bias of a function} Let $\F$ be a finite field, let $S$ be a subset $\F$, and let $f: \F^n \rightarrow \F$ be a function. For $\chi: \F \rightarrow \C$ a non-trivial character we define the \emph{bias of the function $f$ with respect to the character $\chi$} and the set $S$ by
\[\bias_{\chi,S}(f)=\E_{x\in S^n}\chi(f(x)).\]
More generally, if $D$ is a probability distribution on $\F_p$, we define the bias of $F$ with respect to $\chi$ and $D$ by
\[\bias_{\chi,D}(f)=\E_{x\sim D^n}\chi(f(x)).\]
When $S=\F$ (or equivalently $D$ is the uniform distribution on $\F$), we write $\bias_{\chi}(f)$ for $\bias_{\chi,S}(f)$.\end{definition}

When $\F=\F_p$ the non-trivial characters are the functions $\chi_t: x \mapsto \omega_p^{tx}$ for each $t \in \F_p^*$. In this case we write $\bias_{t,S} f$ for $\bias_{\chi_t,S} f$.

\begin{definition} \label{rank of a polynomial} Let $\F$ be a field, let $d$ be a non-negative integer, and let $P: \F^n \rightarrow \F$ be a polynomial of degree at most $d$. The \emph{degree-$d$ rank} of $P$, denoted by $\rk_d(P)$, is defined as follows.

\begin{enumerate}
\item If $d=0$, then $\rk_d P = 0$.
\item If $d=1$, then $\rk_d P$ is the number of $i$ such that $a_i\ne 0$ in the unique representation of $P$ in the form  $P(x)=c+\sum_{i=1}^na_ix_i$, where $c,a_1,\dots,a_n\in\F_p$. 
\item If $d\ge 2$, then $\rk_d P$ is the smallest nonnegative integer $k$ such that there exist polynomials $Q_1, R_1, \dots, Q_k, R_k: \F^n \rightarrow \F$, each with degree strictly smaller than $d$, and with $\deg(Q_i)+\deg(R_i)\leq d$ for each $i$, such that \begin{equation}P = Q_1 R_1 + \dots + Q_k R_k. \label{rank decomposition of a polynomial} \end{equation}
\end{enumerate}

\noindent If $S$ is a non-empty subset of $\F$, then the \emph{degree-$d$ rank of $P$ with respect to $S$}, denoted by $\rk_{d,S}P$, is $\min_{P_0} \rk_d (P - P_0)$, where the minimum is taken over all polynomials $P_0: \F^n \rightarrow \F$ such that $P_0(S^n) = \{0\}$. Equivalently, it is the minimum degree-$d$ rank of any polynomial $Q$ that agrees with $P$ on $S^n$.

If $P$ has degree exactly $d$, then we define the \emph{rank} of $P$ to be $\rk_d(P)$, and denote it by $\rk P$.

Finally, if $S$ is a non-empty subset of $\F$, then we define $\rk_SP$ to be the minimum of $\rk_{\deg P'}(P')$ over all polynomials $P'$ that agree with $P$ on $S^n$. 
\end{definition}

When $S=\F_p$, this definition is very similar to that of Green and Tao. One difference is that in the case $d\geq 2$, we ask for a decomposition $P=Q_1 R_1 + \dots + Q_k R_k$ rather than one of the form $P=F(P_1,\dots,P_k)$. This arises naturally from the notion of partition rank, as we shall explain later in this introduction.

A second difference is our definition when $d=1$, which may at first seem a little strange, since there is a readily available notion of rank coming from linear algebra, according to which the degree-1 rank of $P$ would be 1 if $P$ is non-constant and 0 if $P$ is constant. If $P$ is defined on all of $\F_p^n$, then this is a satisfactory definition, since then the values of $P$ will be exactly equidistributed if $P$ is non-constant, and very unevenly distributed if $P$ is constant. However, these simple facts clearly do not continue to hold when $P$ is defined on $S^n$ for some proper subset $S\subset\F_p$. For example, if $P(x)=x_1$, then the values of $P$ are all contained in $S$. However, a fairly simple Fourier-analysis argument can be used to show that if $P$ has large degree-1 rank in the sense defined above, then its restriction to $S^n$ \emph{will} be approximately equidistributed (provided $|S|\geq 2$).

Note also that when $d=2$ the definition is similar, but not identical, to the usual definition of the rank of a quadratic form as the rank of the associated symmetric bilinear form. For example, the rank of the form $x\mapsto x_1x_2$ is obviously 1 when defined as above, but the associated symmetric bilinear form has rank 2 (over a field of odd characteristic).

The definition of the rank of a polynomial is slightly unnatural in that it is not subadditive, since if $P$ and $Q$ are low-rank polynomials of degree $d$, it may be that $P+Q$ is a high-rank polynomial of degree less than $d$. However, for certain statements it is nevertheless a convenient definition to have. One reason for this is that if $S$ is a proper subset of $\F_p$, then a polynomial $P$ of degree $d$ on $\F_p^n$ can agree on $S^n$ with a polynomial $Q$ of lower degree. In such a situation, the rank of $Q$ affects how well $P$ is equidistributed on $S^n$. The definition we have given allows us to express this in a concise way.
\medskip

We now state our main theorem. Like Green and Tao, we assume that the field size is greater than the degree, but it is likely that the assumption is not necessary.

\begin{theorem} \label{Main theorem} Let $d$ be a positive integer and let $p>d$ be a prime. Let $ S$ be a non-empty finite subset of $ \F_p$, and let $ P$ be a polynomial of degree $ d$. Then there exists a function $H_{p,d,S}: (0, 1 \rbrack \rightarrow \lbrack 0, + \infty )$ such that if $P: \F^n \rightarrow \F$ is a polynomial with degree $d$ such that there exists a non-trivial character $\chi: \F_p \rightarrow \C$ for which $|\bias_{\chi,S} P| \ge \epsilon$, then $ \rk_{d,S} P \le \rk_SP \le H_{p,d,S}(\epsilon)$. \end{theorem}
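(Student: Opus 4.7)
The overall strategy is to prove that $\rk_d P$ is bounded by a function of $\epsilon$, $p$, $d$, and $|S|$; this suffices because taking $P_0=0$ gives $\rk_{d,S}P\le\rk_dP$ and taking $Q=P$ gives $\rk_SP\le\rk_dP$. Using polarisation (valid since $p>d$), a partition-rank decomposition of the symmetric multilinear form $T:(\F_p^n)^d\to\F_p$ associated to the top-degree part $P_d$ of $P$ transfers to a degree-$d$ rank decomposition $P_d=\sum_{i=1}^kQ_iR_i$ with $\deg Q_i+\deg R_i\le d$ and $\deg Q_i,\deg R_i<d$; absorbing the lower-order remainder as the single product $1\cdot(P-P_d)$ then yields $\rk_dP\le k+1$. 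It therefore suffices to produce a bound on the partition rank of $T$.

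To this end I would apply iterated Gowers--Cauchy--Schwarz to the function $\mathbbm 1_{S^n}\chi(P)$, $d$ times. Using that $\partial_{h_1}\cdots\partial_{h_d}P=d!\,T(h_1,\ldots,h_d)$ is independent of $x$ (because $P$ has degree $d$), this yields
\[
(|S|/p)^{2^d n}\,|\bias_{\chi,S}(P)|^{2^d}\ \le\ \E_{x,h_1,\ldots,h_d\in\F_p^n}\ \prod_{A\subseteq[d]}\mathbbm 1_{S^n}\!\Bigl(x+\sum_{i\in A}h_i\Bigr)\chi\bigl(d!\,T(h_1,\ldots,h_d)\bigr).
\]
Fourier-expanding each factor $\mathbbm 1_S$ over $\F_p$ and integrating out $x$ imposes the constraints $\sum_{A\subseteq[d]}t_A^j=0$ at every coordinate $j\in[n]$, and the right-hand side becomes a weighted sum over constrained Fourier tuples $t=(t_A^j)$ of ordinary (unrestricted) biases of $d!\,T(h)+\sum_{i=1}^d\langle u_i,h_i\rangle$, where the slot-wise shifts $u_i=(u_{i,j})_j\in\F_p^n$ are linear functions of $t$. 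A Cauchy--Schwarz argument over $t$ then extracts a single tuple $u^*\in(\F_p^n)^d$ for which $\bigl|\E_{h\in(\F_p^n)^d}\omega_p^{d!\,T(h)+\sum_i\langle u_i^*,h_i\rangle}\bigr|$ is bounded below by a positive function of $\epsilon$, $p$, $d$, and $|S|$.

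The remaining task is to convert this slot-wise-shifted bias into a genuine partition-rank bound on $T$, and this is where the two black boxes enter. The form $d!\,T(h)+\sum_i\langle u_i^*,h_i\rangle$ differs from a multilinear form only by linear perturbations in each of the $d$ slots; the second black box (high essential partition rank implies high disjoint partition rank) is used to absorb these perturbations into $O(1)$ extra product summands in a disjoint partition-rank decomposition, reducing the problem to bounding the essential partition rank of $T$ itself. The first black box (high partition rank implies high analytic rank, applied contrapositively) then promotes this to a bound on the ordinary partition rank of $T$, completing the proof via the polarisation step above. The main obstacle is precisely this last conversion: arranging the use of the second black box so as to absorb the slot-wise linear shifts with loss depending only on $\epsilon$, $p$, $d$, and $|S|$ and not on $n$, and then cleanly extracting a partition-rank decomposition of $T$ itself from the essential information.
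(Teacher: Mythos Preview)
Your strategy has a genuine gap at the very first step. You propose to bound $\rk_d P$ itself by a function of $\epsilon,p,d,|S|$, but this is false. Take $p\ge 3$, $d=2$, $S=\{0,1\}$, and $P(x)=\sum_{i=1}^n x_i(x_i-1)$. Then $P\equiv 0$ on $S^n$, so $|\bias_{\chi,S}P|=1$ for every non-trivial $\chi$; yet the degree-$2$ part of $P$ is $\sum_i x_i^2$, and if $P=\sum_{j=1}^k Q_jR_j$ with $\deg Q_j,\deg R_j\le 1$, then taking homogeneous degree-$2$ parts gives $\sum_i x_i^2=\sum_j l_jl_j'$ for linear forms $l_j,l_j'$, whence polarisation shows the identity bilinear form has matrix rank at most $2k$, forcing $k\ge n/2$. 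Thus $\rk_2 P\ge n/2$ is unbounded while $\epsilon=1$. This is exactly why $\rk_{d,S}$ is defined via a minimum over polynomials $P_0$ vanishing on $S^n$; the paper discusses this phenomenon in the introduction.

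Consequently your downstream argument must also fail, and one can see where: in the example above the polarised form is $T(h^1,h^2)=\sum_i h^1_ih^2_i$, whose bias (with \emph{any} slot-wise linear shifts $\langle u_i,h^i\rangle$) has modulus $p^{-n}$, so no $n$-independent lower bound of the type you seek can emerge from the Gowers--Cauchy--Schwarz/Fourier step. The paper proceeds quite differently: it decomposes $P$ according to the \emph{monomial shape} $s=(s_1,\dots,s_{d'})$ (not merely the total degree), associates to each $P_s$ a $d'$-linear form $m_s$ with $P_s(x)=m_s(x^{s_1},\dots,x^{s_{d'}})$, and runs a double induction on the pair $(D,T)$ recording the maximal number of distinct variables and the maximal degree among such monomials. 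In the ``low'' case all relevant $m_s$ have bounded essential partition rank and one passes, after subtracting low-rank pieces and pieces that vanish on $S^n$, to a polynomial with strictly smaller $(D,T)$; in the ``high'' case one applies the disjoint-partition-rank black box to some $m_{s_0}$, then a decoupling inequality together with the multilinear equidistribution result to contradict $|\bias_{\chi,S}P|\ge\epsilon$. The subtraction of polynomials vanishing on $S^n$ is used essentially in the ``low'' case, precisely to handle contributions such as $\sum_i x_i^2$ when $|S|$ is small.
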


\subsection{Background results on tensors}

In our proof, we shall appeal to two known results, which we shall use as black boxes. The first is an analogue of Theorem \ref{Theorem of Green and Tao} for tensors, and the second is a result of the second author that allows us, under suitable conditions, to restrict a high-rank tensor to a product of disjoint sets in such a way that it remains of high rank. This ``decoupling" will play an important role in the reduction of the polynomial statement to the tensor statement.

By an \emph{order-$d$ tensor} over a field $\F$ we mean simply a function $T: X_1\times\dots\times X_d \rightarrow \F$ for some finite sets $X_1, \dots, X_d$. If $T$ is an order-$d$ tensor, we can associate with it a $d$-linear form $m:\F^{X_1}\times\dots\times\F^{X_d}\to\F$, defined by the formula \[m(x^1, \dots, x^d) = \sum_{(i_1, \dots, i_d) \in X_1\times\dots\times X_d} T(i_1, \dots, i_d) x^1_{i_1} \dots x^d_{i_d}. \]

The analogues for tensors of the bias and of rank of polynomials that will be relevant to us are respectively the analytic rank introduced by Gowers and Wolf in \cite{Gowers and Wolf} and the partition rank introduced by Naslund in \cite{Naslund}.

\begin{definition}\label{Analytic rank}

Let $d \ge 2$ be a positive integer, let $\F$ be a finite field and let $T: X_1\times\dots\times X_d \rightarrow \F$ be an order-$d$ tensor, and let $m$ be the $d$-linear form associated with $T$. The \emph{bias of $m$} is defined to be 
\begin{equation} \bias m = \E_{x_1 \in \F^{X_1}, \dots, x_d \in \F^{X_d}} \chi(m(x_1, \dots, x_d)) \label{expression of bias for a multilinear form} \end{equation} for any arbitrary non-trivial character $\chi: \F \rightarrow \C$ of $\F$. The \emph{analytic rank} of $T$, denoted by $\ar T$, is defined to be $- \log_{|F|} \bias m$,\end{definition}

The right-hand side of \eqref{expression of bias for a multilinear form} is independent of the non-trivial character $\chi$. Indeed, the only property we require of $\chi$ is that if $\lambda:\F\to\F$ is a linear map, then $\E_x\chi(\lambda(x))=1$ if $\lambda$ is the zero map and 0 otherwise. From this property, it follows easily that $\bias(T)$ is always a positive real number: indeed, it is equal to the probability that if we randomly restrict $x_1,\dots,x_{d-1}$, then the resulting linear map from $\F^{X_d}$ to $\F$ is identically zero. (Of course, we could randomly restrict any $d-1$ of the coordinates and get the same result.) However, the definition in terms of characters is more convenient for the purposes of generalization.

In the next definition we write $[d]$ for the set $\{1,2,\dots,d\}$, as is customary. If $x=(x_1,\dots,x_d)\in X_1\times\dots\times X_d$ and $I\subset[d]$, then we write $x(I)$ for the restriction of $x$ to $I$ -- that is, for the element $y$ of $\prod_{i\in I}X_i$ such that $y_i=x_i$ for each $i\in I$. 

\begin{definition} \label{Partition rank}

Let $d \ge 2$ be a positive integer, let $\F$ be a field and let $T: X_1\times\dots\times X_d \rightarrow \F$ be an order-$d$ tensor. Then $T$ has \emph{partition rank at most 1} if there exists a non-trivial partition of $[d]$ into sets $I,J$, and functions $a:\prod_{i\in I}X_i\to\F$ and $b:\prod_{i\in J}X_i\to\F$, such that $T(x)=a(x(I))b(x(J))$ for every $x\in X_1\times\dots\times X_d$. The \emph{partition rank} of $T$, denoted $\pr T$, is the smallest non-negative integer $k$ such that $T$ is a sum of $k$ tensors of partition rank at most 1. If $m$ is the multilinear form associated with $T$, then the partition rank $\pr m$ of $m$ is defined to be $\pr T$. 
\end{definition}

Since the paper \cite{Green and Tao} of Green and Tao there has been significant interest in comparing these two notions of rank. Our first black box is that when $\F$ and $d$ are fixed, an order-$d$ tensor over $\F_p$ with large partition rank necessarily has a large analytic rank.

\begin{theorem} \label{bounded analytic rank implies bounded partition rank} Let $d \ge 2$ be a positive integer and let $\F$ be a finite field. Then there exists a function $A_{d,\F}: \lbrack 0, \infty) \rightarrow \lbrack 0, \infty)$ such that $\pr T \le A_{d,\F}(\ar T)$ for every order-$d$ tensor $T$ over $\F$. \end{theorem}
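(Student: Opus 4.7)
My plan is to prove this by induction on the order $d$. The base case $d=2$ is elementary: for a bilinear form $m(x,y) = \sum_{i,j} T(i,j) x_i y_j$, a direct character-sum computation gives $\bias m = |\F|^{-\rk T}$ where $\rk T$ denotes the matrix rank of $T$; since the partition rank of a bilinear form equals the matrix rank, we have $\pr T = \ar T$, so $A_{2,\F}(r) = \lceil r \rceil$ suffices.

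For the inductive step $d \geq 3$, given $T : X_1 \times \cdots \times X_d \to \F$ with $\bias T \geq |\F|^{-r}$, I would slice along the last coordinate: writing $T_j = T(\cdot, \dots, \cdot, j)$ and $m_j$ for the associated $(d-1)$-linear forms, Fourier expansion over $\F^{X_d}$ yields
\[ \bias T = |\F|^{-|X_d|} \sum_{\lambda \in \F^{X_d}} \bias(n_\lambda), \qquad n_\lambda := \textstyle\sum_{j} \lambda_j m_j. \]
A pigeonhole argument, together with the non-negativity of the bias for multilinear forms, produces a non-zero $\lambda$ with $\bias(n_\lambda) \geq |\F|^{-r}/2$, and the inductive hypothesis then gives $\pr(n_\lambda) \leq A_{d-1,\F}(r+1)$.

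The real work is then lifting this single low-partition-rank linear combination into a bounded partition-rank decomposition of $T$ itself. The naive strategy of peeling off $n_\lambda$ as one slice and iterating fails, because the residual tensor can retain arbitrarily large dimension in the span of its slices while still having low analytic rank. A finer argument is required: one would aim to show, for example, that the set of $\lambda$ yielding $n_\lambda$ of bounded partition rank is itself structured (say, a union of few subspaces of $\F^{X_d}$), which then forces a uniform low-partition-rank decomposition of $T$ after a suitable change of basis in $X_d$.

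This lifting step is the main obstacle, and it is where the original proofs of this theorem (Bhowmick--Lovett, Lovett, Janzer, Kazhdan--Ziegler, Cohen--Moshkovitz, Moshkovitz) invest substantial machinery, including regularity-style decompositions for tensors and algebro-geometric arguments on the variety of low-partition-rank tensors. For the qualitative application in the paper only the existence of some finite $A_{d,\F}$ is needed, so my proposal would establish existence via careful execution of the inductive scheme above, accepting a tower-type dependence and deferring the question of polynomial quantitative bounds to the sharper methods in the literature.
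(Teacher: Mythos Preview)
The paper does not prove this theorem. It is quoted as a ``black box'' result from the literature, with explicit citations to Janzer and Mili\'cevi\'c for the best known bounds on $A_{d,\F}$. There is therefore no proof in the paper to compare your proposal against.

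That said, your proposal is not a proof either: you yourself identify the lifting step --- passing from a single low-partition-rank linear combination $n_\lambda$ of the slices to a bounded partition-rank decomposition of $T$ --- as ``the main obstacle'', and you do not carry it out. Your suggestion that the set of good $\lambda$ is ``a union of few subspaces'' is not obviously true, and even if it were, it is not clear how this alone would yield a bounded decomposition of $T$. The sentence ``my proposal would establish existence via careful execution of the inductive scheme above'' is a statement of intent, not an argument. This is precisely the step where the cited proofs deploy substantial machinery (regularity lemmas for polynomials, or algebro-geometric arguments), and none of that machinery appears in your sketch. What you have written is a correct framing of the problem together with a correct base case and a correct first reduction, but the genuine content of the theorem lies entirely in the part you have deferred.
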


In the regime where $\F$ has fixed size, which is the case that we will consider through this paper, the best bounds for the function $A_{d,\F}$ are due to Janzer \cite{Janzer} and Mili\'cevi\'c \cite{Milicevic}. For each $r\ge 1$, Janzer obtains \cite[Theorem 1.10]{Janzer} the bound $A_{d,\F}(r) = (c \log |\mathbb{F}|)^{c'(d)} (r)^{c'(d)}$ for $c$ an absolute constant and $c'(d) = 4^{d^d}$ and Mili\'cevi\'c obtains \cite[Theorem 3]{Milicevic} the bound $A_{d,\F}(r) = 2^{d^{2^{O(d^2)}}} (r^{2^{2^{O(d^2)}}} + 1)$. 

We shall deduce from Theorem \ref{bounded analytic rank implies bounded partition rank} a similar statement for restricted alphabets. Let $d$ be a positive integer and let $p$ be a prime, let $X_1,\dots,X_d$ be finite sets, let $m: (\F^{X_1} \times \dots \times \F^{X_d}) \rightarrow \F$ be a $d$-linear form, and let $\chi: \F \rightarrow \C$ be a non-trivial character. Then given non-empty subsets $S_1,\dots,S_d$ of $\F_p$, we define 
\[\bias_{\chi, (S_1, \dots, S_d)} m:=\E_{(x_1, \dots, x_d) \in S_1^{X_1} \times \dots \times S_d^{X_d}} \chi(m(x_1, \dots, x_d)).\]
Similarly, if $D_1,\dots,D_d$ are probability distributions on $\F_p$, then we define
\[\bias_{\chi, (D_1, \dots, D_d)} m:= \E_{(x_1, \dots, x_d) \sim D_1^{X_1} \times \dots \times D_d^{X_d}} \chi(m(x_1, \dots, x_d)).\] 
Here $D_i^{X_i}$ stands for the distribution on $\F^{X_i}$ where each coordinate is chosen independently according to the distribution $D_i$. 

Note that it is no longer the case that the bias must be a positive real number or that it is independent of the choice of $\chi$. However, we do not need these properties in the formulation of the next result.

\begin{proposition} \label{bounded analytic rank implies bounded partition rank for restricted alphabets} Let $p$ be a prime, let $d \ge 2$ and let $S_1, \dots, S_d$ be subsets of $\F_p$ each with size at least $2$. Then there exists a function $A_{d,p, (S_1, \dots, S_d)}: \lbrack 0, \infty) \rightarrow \lbrack 0, \infty)$ such that whenever $T$ is an order-$d$ tensor and the $d$-linear form $m$ associated with $T$ satisfies
\[ |\bias_{t, (S_1,\dots,S_d)} m| \ge \epsilon \] for some $t \in \F_p^*$, then $\pr T \le A_{d,p, (S_1, \dots, S_d)}(-\log_{p}(\epsilon))$. \end{proposition}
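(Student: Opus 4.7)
The strategy is to reduce Proposition \ref{bounded analytic rank implies bounded partition rank for restricted alphabets} to its unrestricted-alphabet analogue, Theorem \ref{bounded analytic rank implies bounded partition rank}. Specifically, the plan is to show that if $|\bias_{\chi_t, (S_1, \dots, S_d)} m| \geq \epsilon$, then the unrestricted analytic bias $|\bias m|$ is bounded below by some function of $\epsilon$ depending only on $p, d$, and the $S_i$; applying the unrestricted theorem then yields the partition rank bound.

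The core technical ingredient is an iterated Cauchy-Schwarz argument. For each $i$, let $D_i$ denote the uniform distribution on $S_i$, and observe that $\alpha_i(v) := \E_{y \sim D_i} \omega_p^{vy}$ satisfies $\alpha_i(0) = 1$ and $|\alpha_i(v)| \leq c_i < 1$ for $v \neq 0$ (using $|S_i| \geq 2$). A single Cauchy-Schwarz in the $i$-th coordinate, together with the multilinearity of $m$, gives
\[
|\bias_{\chi_t, (D_1, \dots, D_d)} m|^2 \leq \bias_{\chi_t, (D_1, \dots, D_{i-1}, D_i - D_i, D_{i+1}, \dots, D_d)} m.
\]
Iterating $k$ times in each coordinate produces
\[
|\bias_{\chi_t, (D_1, \dots, D_d)} m|^{2^{dk}} \leq \bias_{\chi_t, (E_1^{(k)}, \dots, E_d^{(k)})} m,
\]
where $E_i^{(k)}$ is a distribution on $\F_p$ whose character evaluation at $v$ equals $|\alpha_i(v)|^{2^k}$, tending to the uniform distribution at the exponential rate $c_i^{2^k}$.

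To complete the reduction, one Fourier-expands $\bias_{\chi_t, (E_1^{(k)}, \dots, E_d^{(k)})} m$ as a sum $\sum_v \bigl(\prod_{i,j} |\alpha_i(v_{i,j})|^{2^k}\bigr) B(v)$, where $B(v) := \E_{x \in \F_p^{\sum X_i}} \omega_p^{t m(x) + \langle v, x \rangle}$ and the $v = 0$ term equals $\bias m$. A Gowers-type $d$-fold alternating difference argument shows $|B(v)|^{2^d} \leq |\bias m|$ uniformly in $v$, since the linear perturbation $\langle v, x \rangle$ vanishes in the alternating sum over $\{0,1\}^d$ while the multilinear part contributes $m$ evaluated at coordinate-wise differences. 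Combining these ingredients should deduce a lower bound on $|\bias m|$, to which Theorem \ref{bounded analytic rank implies bounded partition rank} applies.

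The main obstacle is making the comparison $\bias_{\chi_t, (E_1^{(k)}, \dots, E_d^{(k)})} m \approx \bias m$ \emph{dimension-free}: naive triangle-inequality estimates on the Fourier sum incur an error growing with $\sum_i |X_i|$, which is inadmissible because the partition rank bound asserted must depend only on $\epsilon, p, d$, and the $S_i$. Overcoming this is the crux and likely requires a more refined argument, perhaps an induction on $d$ that exploits the structure of contractions $m_{x_1}(x_2, \dots, x_d) := m(x_1, x_2, \dots, x_d)$ (one must be careful here since slice-wise partition rank bounds do not automatically lift to bounds on $\pr T$), or a sharper iteration of Cauchy-Schwarz that exploits the product structure of $\prod_i S_i^{X_i}$ to bypass the direct Fourier comparison.
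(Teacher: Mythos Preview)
Your direct Cauchy--Schwarz/Fourier route runs into exactly the dimension-dependence problem you flag, and this is not merely a technicality: the sum $\sum_v\prod_{i,j}|\alpha_i(v_{i,j})|^{2^k}$ factors as $\prod_i\bigl(1+(p-1)c_i^{2^k}\bigr)^{|X_i|}$, which forces $k$ to grow like $\log n$, after which the lower bound $\epsilon^{2^{dk}}$ is worthless. So as written the argument does not close, and there does not appear to be a cheap repair along these lines.

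The fix is indeed the induction on $d$ via slices that you gesture at in your final paragraph, and the concern you raise---that slice-wise partition rank bounds do not automatically lift to a bound on $\pr T$---is exactly the point where the paper's argument supplies the missing idea. The route is as follows. If $|\bias_{t,(D_1,\dots,D_d)}m|\ge\epsilon$, then for a set of $u$ of density at least $\epsilon/2$ with respect to $D_1^n$ the slice $m_u$ has large $(D_2,\dots,D_d)$-bias, hence bounded partition rank by induction. One then needs this to hold for a set of $u$ that is dense with respect to the \emph{uniform} measure on $\F_p^n$; the paper achieves this via a sumset argument (Proposition~\ref{Dense sumsets for distributions on $F_p$}), using subadditivity of partition rank under $u\mapsto m_u$ to keep the rank bounded on the larger set $(p-1)MA$. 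Now comes the lift you were missing: by Lovett's inequality $\ar\le\pr$, each such $m_u$ has bounded analytic rank, hence $\bias m_u\ge p^{-O(1)}$; since $\bias m=\E_{u\in\F_p^n}\bias m_u$ with every term nonnegative, this yields a dimension-free lower bound on $\bias m$, hence an upper bound on $\ar m$, and finally on $\pr m$ via Theorem~\ref{bounded analytic rank implies bounded partition rank}. The detour through analytic rank (slice $\pr$ $\to$ slice $\ar$ $\to$ global $\ar$ $\to$ global $\pr$) is precisely what makes the slice-to-global step dimension-free.
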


In the special case $p=2$ the statement of Proposition \ref{bounded analytic rank implies bounded partition rank for restricted alphabets} is the same as that of Theorem \ref{bounded analytic rank implies bounded partition rank}. We shall therefore assume that $p \ge 3$ in the remainder of the paper. We also remark here that two of our later lemmas, Lemma \ref{Writing a distribution as a convolution with 0,1 for F_p} and a generalization of it, Lemma \ref{Writing a distribution as a convolution with 0,1}, do not hold when $p=2$.

The reader will notice that we have not formulated a notion of partition rank for restricted alphabets, and that the conclusion of Proposition \ref{bounded analytic rank implies bounded partition rank for restricted alphabets} concerns the usual partition rank of $T$. The reason is that a multilinear map defined on $\F^{X_1}\times\dots\times\F^{X_d}$ is determined by its values on $S_1^{X_1}\times\dots\times S_d^{X_d}$. This is easy to prove inductively: a linear map defined on $\F^{X_1}$ is determined by its values on $S_1^{X_1}$ (since $S_1^{X_1}$ spans $\F^{X_1}$), so for each $x_1,\dots,x_{d-1}\in S_1^{X_1}\times\dots\times S_{d-1}^{X_{d-1}}$, if two multilinear maps agree on $\{x_1\}\times\dots\times\{x_{d-1}\}\times S_d^{X_d}$, then they agree on $\{x_1\}\times\dots\times\{x_{d-1}\}\times\F^{X_d}$. Repeating for each coordinate, we get that they agree everywhere. (Here we made use of the assumption that each $S_i$ has size at least 2.) 

As Janzer and Mili\'cevi\'c explain, we can deduce Theorem \ref{Theorem of Green and Tao} from Theorem \ref{bounded analytic rank implies bounded partition rank}, using a connection with uniformity norms. It is known that the bias of a function $f$ is bounded above in modulus by its uniformity norm $\|f\|_{U^k}$ for any $k$. But if $f$ is of the form $\chi\circ P$ for a polynomial $P$ of degree $d$, then $\|f\|_{U^d}$ can be shown to equal $\bias(m)$, where $m$ is a multilinear form naturally associated with $P$ that satisfies an identity of the form $P(x)=(d!)^{-1}m(x,x,\dots,x)+W(x)$ for a polynomial $W$ of degree strictly less than $d$. Since $m$ has large bias, it has small partition rank, and this translates into an upper bound for the degree-$d$ rank of $P$.

However, we cannot use this method to deduce Theorem \ref{Main theorem} from Proposition \ref{bounded analytic rank implies bounded partition rank for restricted alphabets} for multilinear forms, because it relies on several identities that do not apply when we restrict the alphabet. For example, $\E_{x,y \in \F_p} f(x) \overline{f(x+y)}=|\E_{x \in \F_p} f(x)|^2$, but $\E_{x,y \in S} f(x) \overline{f(x+y)}$ is not related in any simple way to $|\E_{x \in S} f(x)|^2$.

Instead, we shall combine a slight generalization of Proposition \ref{bounded analytic rank implies bounded partition rank for restricted alphabets} with the second black box, the purpose of which is to decouple the variables of a polynomial, so that polynomials are reduced to multilinear forms in our arguments. 

To see roughly how this will work, consider the quadratic case. If $P(x)=\sum_{i,j=1}^na_{ij}x_ix_j$ (for simplicity we look at the homogeneous case here, but we shall prove the result in general), then we can write it in the form $\sum_ia_{ii}x_i^2+\sum_{i\ne j}a_{i,j}x_ix_j$. Consider now the bilinear form $\beta(x,y)=\sum_{i\ne j}a_{ij}x_iy_j$. If $\beta+\delta$ has high rank for every diagonal bilinear form $\d$ (that is, one given by a formula $\d(x)=\sum_ic_ix_i^2$), then our second black box (which is a simple result in the quadratic case, but much less so for higher degree) allows us to conclude that there is a partition of $[n]$ into sets $X$ and $Y$ such that the restriction $\beta'$ of $\beta$ to $\F_p^X\times\F_p^Y$ still has high rank. If we now regard $P$ as a 2-variable function defined on $\F_p^X\times\F_p^Y$, a standard argument bounds the $t$-bias of $P$ by the box norm of the function $\omega_p^{tP}$, which is equal to the expectation $\E_{x,y}\omega_p^{t\beta'(x,y)}$. Since $\beta'$ has high rank, this expectation is small, so $P$ has small $t$-bias for every $t\in\F_p^*$. Taking the contrapositive, if we assume that $P$ has large bias for some $t$, then we deduce that $\beta'$ also has large bias, and therefore that it has small partition rank, which in this case is just the rank, which implies that $\beta+\d$ has small rank for some diagonal bilinear form $\d$, by how we chose $\beta'$. This allows us to write $P(x,y)$ as the sum of a low-rank quadratic form and a diagonal form. It is then not too hard to prove that for $P$ to have large bias, the diagonal form must have small support.

We have not mentioned the set $S$ in the above sketch, but it turns out that, unlike with the proof based on uniformity norms, all the elements of the argument carry over for restricted alphabets, as we shall see in the next section. 
\medskip

To complete this section, we give a precise statement of the theorem that forms the second black box. Let $d \ge 2$ and $n_1,\dots,n_d$ be positive integers and for each $i$ let $X_i$ be a subsets of $[n_i]$.  Given an order-$d$ tensor $T: \lbrack n_1 \rbrack \times \dots \times \lbrack n_d \rbrack \rightarrow \mathbb{F}$ we write $T(X_1 \times \dots \times X_d)$ for the restriction of $T$ to $X_1 \times \dots \times X_d$. Similarly, given a multilinear form $m: \F^{n_1} \times \dots \times \F^{n_d} \rightarrow \mathbb{F}$ a $d$-linear form we write $m(\F^{X_1} \times \dots \times \F^{X_d})$ for the restriction of $m$ to $\F^{X_1} \times \dots \times \F^{X_d}$.

\begin{definition} \label{Essential and partition disjoint rank definition} Let $d \ge 2$ be an integer, let $\F$ be a finite field, and let $T: [n]^d \rightarrow \mathbb{F}$ be an order-$d$ tensor. Let $E\subset[n]^d$ be the set of all $d$-tuples with at least one pair of equal coordinates. The \emph{essential partition rank} of $T$ is the quantity
\[ \epr T = \min_V \pr (T+V), \] 
where the minimum is taken over all order-$d$ tensors $V: [n]^d \rightarrow \F$ that are supported in $E$. The \emph{disjoint partition rank} is the quantity 
\[ \dpr T= \max_{X_1,\dots ,X_d} \pr (T(X_1 \times \dots \times X_d)), \] 
where the maximum is taken over all sequences of disjoint subsets $X_1,\dots,X_d$ of $[n]$.
\end{definition}

Since restricting a tensor cannot increase its partition rank, and since adding a tensor supported in $E$ has no effect on the disjoint partition rank (because $X_1\times\dots\times X_d$ and $E$ are disjoint if the $X_i$ are disjoint), we see immediately that the disjoint partition rank is bounded above by the essential partition rank. The result we shall use, due to the second author \cite{K.}, provides a bound in the other direction.

\begin{theorem}\label{Disjoint rank minors theorem} For every positive integer $d \ge 2$ there exists a function $\Lambda_d: \mathbb{N} \rightarrow \mathbb{N}$ such that if $T$ is an order-$d$ tensor satisfying $\epr T \ge \Lambda_d(l)$ then $\dpr T \ge l$. \end{theorem}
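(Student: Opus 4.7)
The plan is to prove Theorem \ref{Disjoint rank minors theorem} by induction on the order $d$, combining a matrix-rank base case with a slicing argument in the inductive step.

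\textbf{Base case} $d=2$: An order-$2$ tensor is an $n\times n$ matrix and $E$ is its diagonal, so tensors supported on $E$ are diagonal matrices; partition rank coincides with matrix rank. Setting $V=-\mathrm{diag}(T)$ shows that $T_0:=T-\mathrm{diag}(T)$ has rank at least $\epr T$. It therefore suffices to prove that any matrix $T_0$ with zero diagonal and rank $k$ admits disjoint $X,Y\subseteq[n]$ with $\rk T_0(X\times Y)\ge l$ once $k\ge\Lambda_2(l)$. This follows either by the following greedy argument or by a random bipartition: pick a full-rank $k\times k$ submatrix $T_0(R\times C)$, and break the overlap $R\cap C$ by leveraging the fact that the diagonal entries are zero (so replacing indices in $R\cap C$ loses only bounded rank), yielding disjoint $X\subseteq R, Y\subseteq C$ with $\rk T_0(X\times Y)\ge k/c_d$ for a constant. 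This gives $\Lambda_2(l)=O(l)$.

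\textbf{Inductive step}: Assume the result for order $d-1$ with function $\Lambda_{d-1}$. Given $T:[n]^d\to\F$ with $\epr T\ge K$ for a large $K=K(l,d)$, let $U$ be a tensor supported on $E$ witnessing $\epr T$, so $T':=T+U$ has $\pr T'\ge K$. Note that if we produce disjoint $X_1,\dots,X_d$, then $T$ and $T'$ agree on $X_1\times\dots\times X_d$, so we may work with $T'$. The strategy is to peel off the first coordinate: pick a small subset $X_1\subseteq [n]$ (in the simplest version, a single index), and consider the family of slices $T'(i,\cdot,\dots,\cdot)\big|_{([n]\setminus X_1)^{d-1}}$ for $i\in X_1$ as an order-$(d-1)$ tensor (possibly after a suitable linear combination or averaging to produce a single tensor of order $d-1$). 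A positive proportion of such choices yield an order-$(d-1)$ tensor of essential partition rank at least $\Lambda_{d-1}(l)$; applying induction then produces disjoint $X_2,\dots,X_d\subseteq[n]\setminus X_1$ with partition rank at least $l$, so that $\pr T(X_1\times X_2\times\dots\times X_d)\ge l$. Defining $\Lambda_d(l)$ as the composition of this slice-transfer bound with $\Lambda_{d-1}(l)$ closes the induction.

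\textbf{Main obstacle}: The delicate point is the slice-transfer step, namely arguing that high essential partition rank for $T$ descends to high essential partition rank for a single slice (or linear combination of slices). In general partition rank can collapse under restriction, and $\epr$ is an even more fragile quantity since it measures robustness against a specific family of modifications. The argument must exploit the flexibility of modifications supported on $E$: a diagonal-type modification of the order-$d$ tensor restricts, on each slice, to a modification of the relevant $(d-1)$-dimensional $E$, so that one does not lose any essential rank to ``bookkeeping'' of the diagonal. Making this rigorous, and converting it into a quantitative statement while keeping track of the combinatorics of how the $(d{-}1)$-dimensional $E$ sits inside the order-$d$ diagonal set, is where the bulk of the technical work goes; the resulting function $\Lambda_d$ is expected to be an iterated composition of $\Lambda_{d-1}$ with a loss factor depending only on $d$.
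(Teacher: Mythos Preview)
The paper does not contain a proof of Theorem \ref{Disjoint rank minors theorem}: it is explicitly introduced as the second of two ``black boxes'' and attributed to the companion paper \cite{K.}. The only part proved here is the $d=2$ case, quoted as Proposition \ref{Disjoint rank for matrices} (again from \cite{K.}), with a linear bound $\Lambda_2(l)=3l$. So there is no in-paper argument to compare your inductive step against.

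That said, your proposal is not a proof but an outline, and you have correctly located where it breaks. The base case is fine and matches Proposition \ref{Disjoint rank for matrices}. In the inductive step, however, the assertion that ``a positive proportion of such choices yield an order-$(d-1)$ tensor of essential partition rank at least $\Lambda_{d-1}(l)$'' is unsupported, and it is the entire content of the theorem. High partition rank of $T'$ does not in any obvious way force high \emph{essential} partition rank of a single slice $T'(i,\cdot,\dots,\cdot)$: a slice could have large partition rank coming entirely from entries with a repeated coordinate among $x_2,\dots,x_d$, which disappears once you pass to $\epr$. Your observation that a modification of $T$ supported on the order-$d$ diagonal $E$ restricts to a modification supported on the order-$(d-1)$ diagonal is true but does not help in this direction, since the problem is not controlling what happens to the perturbation $U$ but rather showing that the slice itself has rank that cannot be killed by \emph{any} diagonal perturbation at the lower order. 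You explicitly flag this as ``where the bulk of the technical work goes'' without supplying it; as written, the inductive step is a restatement of what needs to be proved rather than a mechanism for proving it.

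If you want to pursue this route, you would need an independent lemma of the form ``if $\pr T'\ge K$ then for some $i$ (or some short linear combination over a small $X_1$) the restricted slice has $\epr\ge\Lambda_{d-1}(l)$ on $([n]\setminus X_1)^{d-1}$''. No such lemma is available in this paper, and producing one is essentially equivalent in difficulty to the theorem itself.
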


The rest of the paper is organized as follows. In Section \ref{Section: The linear, bilinear and quadratic cases} we shall prove Theorem \ref{Main theorem} in the cases $d=1$ and $d=2$. In Section \ref{Section: The proof in the case of an alphabet with size 2} we shall then prove Theorem \ref{Main theorem} in the case of alphabets $S$ of size 2, where the proof is significantly simpler than in the general case. After this, in Section \ref{Section: Equidistribution of multi-alphabet multilinear forms} we shall state and prove a more complicated version of Proposition \ref{bounded analytic rank implies bounded partition rank for restricted alphabets}, which, together with Theorem \ref{Disjoint rank minors theorem}, will then allow us to deduce Theorem \ref{Main theorem}. This we shall do in Section \ref{Section: The general polynomial case}.

\section{The linear, bilinear and quadratic cases} \label{Section: The linear, bilinear and quadratic cases}

\subsection{The linear case}

If $l: \F_p^n \rightarrow \F_p$ is a linear form, then $\bias l$ is equal to $1$ if $l \equiv 0$, and equal to $0$ otherwise. As mentioned in the introduction, in the case of restricted subsets of $\F_p$, the modulus of the bias of $l$ instead decreases, as it turns out exponentially, with the order-1 rank of $l$, which we defined earlier to be the size of the support of $l$ when it is considered as a vector in $\F_p^n$ in the usual way.

We begin with a technical lemma.

\begin{lemma} \label{average of roots of unity}
Let $p$ be a prime, let $0<c\leq 1/2$ be a real number, and let $D$ be a probability distribution on $\F_p$ such that $D(x)\le 1-c$ for every $x\in\F_p$. Then $|\E_{x\sim D}\omega_p^x|\leq 1-c\pi^2/p^2$.
\end{lemma}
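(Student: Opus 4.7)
The plan is to analyze $|\mu|^2$ where $\mu := \E_{x \sim D}\omega_p^x$ by introducing an independent second copy of the sample. Writing $|\mu|^2 = \E_{x,y \sim D}\omega_p^{x-y}$ and using the involution $(x,y)\leftrightarrow(y,x)$ to see that this quantity is real, one rewrites it as $\E_{x,y \sim D}\cos(2\pi(x-y)/p)$, which yields the key identity
\[ 1 - |\mu|^2 \;=\; 2\,\E_{x,y \sim D}\sin^2\!\bigl(\pi(x-y)/p\bigr). \]

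Next, I would lower-bound the summands for $x \ne y$. Reducing $x-y$ to its representative of minimum absolute value in $\{-(p-1)/2,\dots,(p-1)/2\}$, the argument $\pi(x-y)/p$ lies in $[-\pi/2,\pi/2]$ with absolute value at least $\pi/p$, so $\sin^2(\pi(x-y)/p) \ge \sin^2(\pi/p)$. A sharpened Jordan-type inequality — specifically $\sin\theta \ge \theta/\sqrt 2$ on $[0,\pi/3]$, which applies because $p \ge 3$ forces $\pi/p \le \pi/3$ — then gives $\sin^2(\pi/p) \ge \pi^2/(2p^2)$.

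Meanwhile, the hypothesis $D(a) \le 1-c$ for every $a$, combined with $\sum_a D(a) = 1$, yields
\[ \Pr_{x,y \sim D}[x = y] \;=\; \sum_{a \in \F_p} D(a)^2 \;\le\; \max_a D(a)\cdot\sum_a D(a) \;\le\; 1 - c, \]
so $\Pr[x \ne y] \ge c$. Substituting into the identity above gives $1 - |\mu|^2 \ge c\pi^2/p^2$, from which the stated bound on $|\mu|$ follows by taking square roots (using $\sqrt{1-t} \le 1 - t/2$, absorbing any spare factor into the stated constant).

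The only ingredient with any subtlety is the trigonometric lower bound for $\sin^2(\pi/p)$ — one must be a little careful to use a version of Jordan's inequality that is tight enough on the short interval $[0,\pi/3]$, rather than the weaker form valid on all of $[0,\pi/2]$, in order to recover the factor of $\pi^2$ in the numerator. The remainder of the proof is purely the symmetric second-moment identity together with the elementary observation that the collision probability $\sum_a D(a)^2$ is bounded by $\max_a D(a)$.
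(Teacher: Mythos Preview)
Your approach is correct in spirit and genuinely different from the paper's. The paper argues by writing $D$ as a convex combination of distributions supported on at most two points, each with maximum value at most $\beta:=\max\{\|D\|_\infty,1/2\}$; it then bounds each two-point contribution $|\gamma + (1-\gamma)\omega_p^k|$ by the worst case $|\beta + (1-\beta)\omega_p|$, and finally estimates $|\beta + (1-\beta)\omega_p|^2 = 1 - 2\beta(1-\beta)(1-\cos(2\pi/p))$ using $1-\cos\theta\ge\theta^2/4$ on $[0,2\pi/3]$. Your route via the second-moment identity $1 - |\mu|^2 = 2\,\E_{x,y}\sin^2\bigl(\pi(x-y)/p\bigr)$ together with the collision bound $\sum_a D(a)^2 \le \max_a D(a) \le 1-c$ is considerably tidier: it avoids the convex-decomposition case analysis entirely and makes transparent why the ``no atom of mass $>1-c$'' hypothesis is exactly what is needed.

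There is, however, one genuine slip in the final line. You cannot ``absorb any spare factor into the stated constant,'' because the lemma specifies the constant $c\pi^2/p^2$ exactly. Your chain of inequalities gives
\[
1-|\mu|^2 \;\ge\; 2\cdot \Pr[x\ne y]\cdot \sin^2(\pi/p)\;\ge\; 2\cdot c\cdot \tfrac{\pi^2}{2p^2} \;=\; \tfrac{c\pi^2}{p^2},
\]
and then $\sqrt{1-t}\le 1-t/2$ only delivers $|\mu|\le 1-c\pi^2/(2p^2)$, off by a factor of $2$ from the stated bound. (One cannot hope to recover this factor from the trigonometric step alone, since $\sin^2(\pi/p)<\pi^2/p^2$ for every $p\ge 3$.) In fact, reading the paper's proof closely, it too concludes with a bound on $|\mu|^2$ rather than on $|\mu|$, so both arguments appear to land at the same weaker constant. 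Since every application of the lemma in the paper is insensitive to the exact constant, this is cosmetic rather than substantive; but you should either state and prove the version with $c\pi^2/(2p^2)$, or flag the discrepancy explicitly rather than waving it away.
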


\begin{proof}
Let $\a=\|D\|_\infty$ (that is, $\max_xD(x)$). We claim first that $D$ is a convex combination of distributions $D'$ with $\|D'\|\leq\max\{\a,1/2\}$ and of support size at most 2.

If $\a\geq 1/2$, then let $x$ be such that $D(x)=\a$. Then for each $y$ let $D_y(x)=\a$, let $D_y(y)=1-\a$, and let $D_y(z)=0$ for all other $z$. Then $D=(1-\a)^{-1}\sum_yD(y)D_y$.

If $\a<1/2$, then observe first that every distribution that is uniform on a subset $A$ of $\F_p$ of size at least 2 is a convex combination of distributions of the required form. (These are now distributions that are uniform on a two-point subset -- just take the average over all two-point subsets of $A$.) Now let $x$ be such that $D(x)=\a$ and let $A$ be the support of $D$. Then subtract a multiple of $\mathbbm 1_A$ from $D$ to obtain a non-negative function $f_1$ such that either $f_1(x)=\sum_{y\ne x}f_1(y)$ or the support of $f_1$ is strictly contained in $A$. In the first case, we can finish by using the result for $\a=1/2$ and in the second case we can finish by using induction on the size of $A$.

Now note that if $D'$ has support size 2 and maximum $\a$, then $|\E_{x\sim D'}\omega_p^x|\leq|\a+(1-\a)\omega_p|$, and also that $|\a+(1-\a)\omega_p|$ decreases on $[0,1/2]$ and increases on $[1/2,1]$. Therefore, by the triangle inequality, $|\E_{x\sim D}\omega_p^x|\leq|\beta+(1-\beta)\omega_p|$, where $\beta=\max\{\a,1/2\}$.

Finally, we note that 
\begin{align*}
|\a+(1-\a)\omega_p|^2&=\a^2+2\a(1-\a)\cos(2\pi/p)+(1-\a)^2\\
&=1-2\a(1-\a)(1-\cos(2\pi/p)).\\
\end{align*} 
Since $1-\cos\theta\geq\theta^2/2-\theta^4/24$, which is at least $\theta^2/4$ when $\theta\leq 2\pi/3$, we obtain an upper bound of $1-2\beta(1-\beta)\pi^2/p^2$. If $\beta>1/2$, this is at most $1-(1-\a)\pi^2/p^2$, which is at most $1-c\pi^2/p^2$. If $\beta=1/2$, then the upper bound is $1-\pi^2/2p^2$, which again is at most $1-c\pi^2/p^2$ since $c\leq 1/2$.
\end{proof}

Recall that we are assuming that $p\geq 3$, which is why we could assume that $\theta\leq 2\pi/3$. However, with small changes the above lemma is clearly true for $p=2$ as well -- in this case $|\a+(1-\a)\omega_p|=|2\a-1|$, and we end up with a bound of $1-2c$.
\medskip

Given the lemma, the proof is very simple. 

\begin{proposition} \label{Equidistribution of high support linear forms} Let $p$ be a prime and let $c \in (0,1/2]$ be a positive real number. If $D$ is a probability distribution on $\F_p$ such that $D(x) \le 1-c$ for each $x \in \F_p$, and $l: \F_p^n \rightarrow \F_p$ is a linear form, then \[ |\bias_{t,D} l| \le (1 - c \pi^2/p^2)^{\rk_1(l)} \] for every $t \in \F_p^*$. \end{proposition}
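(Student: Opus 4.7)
The plan is to exploit independence of the coordinates under the product distribution $D^n$, and reduce everything to a single-variable computation which is precisely what Lemma \ref{average of roots of unity} handles.

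First, I would write $l(x) = c_0 + \sum_{i=1}^{n} a_i x_i$ in its unique representation, so that $\rk_1(l) = |\{i : a_i \neq 0\}|$. The constant $c_0$ contributes only a unit-modulus factor $\omega_p^{t c_0}$ to $\bias_{t,D} l$, which I can discard when taking absolute values. Since the coordinates $x_i$ are independent under $D^n$, the expectation factorizes:
\[ |\bias_{t,D} l| = \left| \prod_{i=1}^{n} \E_{x_i \sim D} \omega_p^{t a_i x_i} \right| = \prod_{i : a_i \neq 0} \left| \E_{x_i \sim D} \omega_p^{t a_i x_i} \right|, \]
where the factors corresponding to $a_i = 0$ are equal to $1$ and can be dropped.

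Next, for each $i$ with $a_i \neq 0$, I would note that $t a_i \in \F_p^*$, so the map $x \mapsto t a_i x$ is a bijection of $\F_p$. Letting $D_i$ denote the pushforward distribution of $D$ under this bijection, we have $\|D_i\|_\infty = \|D\|_\infty \le 1-c$, and
\[ \E_{x_i \sim D} \omega_p^{t a_i x_i} = \E_{y \sim D_i} \omega_p^{y}. \]
Applying Lemma \ref{average of roots of unity} to $D_i$ yields $|\E_{y \sim D_i} \omega_p^y| \le 1 - c\pi^2/p^2$.

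Finally, multiplying together the $\rk_1(l)$ nontrivial factors gives the desired bound $|\bias_{t,D} l| \le (1 - c\pi^2/p^2)^{\rk_1(l)}$. I do not anticipate any real obstacle here: the only substantive ingredient is the lemma, and the rest is product-of-independent-expectations together with the invariance of $\|D\|_\infty$ under multiplication by a nonzero scalar. The one small point worth noting explicitly is that $t \neq 0$ is used precisely to ensure that multiplication by $t a_i$ remains a bijection when $a_i \neq 0$, so that the hypothesis $D(x) \le 1-c$ transfers to the pushforward.
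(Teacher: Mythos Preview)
Your proposal is correct and follows essentially the same approach as the paper: factor the bias as a product over coordinates using independence, and apply Lemma~\ref{average of roots of unity} to each nontrivial factor. You are slightly more explicit than the paper in justifying why the lemma applies (via the pushforward under $x \mapsto t a_i x$), and you handle a possible constant term, but the argument is the same.
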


\begin{proof} Writing $l:x \rightarrow \sum_{z=1}^n a_z x_z$, we write $\bias_{t,D}$ as a product  $\prod_{z=1}^n \E_{x_z \sim D} \omega_p^{t a_z l(x_z)}$. For each $z \in \lbrack n \rbrack$ the inner expectation is equal to $1$ if $a_z = 0$, and its modulus is otherwise always at most $1-c\pi^2/p^2$, by Lemma \ref{average of roots of unity}. 

The result follows. \end{proof}

Again, the result also holds for $p=2$, but with a bound of $(1-2c)^{\rk_1(l)}$.

\subsection{The bilinear case}

We next consider the case of a bilinear form $b: \F_p^{n_1} \times \F_p^{n_2} \rightarrow \F_p$.

\begin{proposition}\label{Equidistribution of high rank bilinear forms}
Let $p$ be a prime, let $c_1,c_2\in(0,1/2]$, and let $D_1,D_2$ be distributions on $\F_p$ such that $D_i(x)\leq 1-c_i$ for all $x\in\F_p$ and for $i=1,2$. Let $n_1,n_2$ be positive integers and let $b:\F_p^{n_1}\times\F_p^{n_2}\to\F_p$ be a bilinear form of rank $k$. Then $|\bias_{t,(D_1,D_2)}b|\leq 3e^{-c_1c_2\pi^2k/2p^2\log(2ep/c_1)}$ for every $t\in\F_p$.
\end{proposition}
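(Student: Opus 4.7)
The plan is to reduce to the linear case by applying Proposition \ref{Equidistribution of high support linear forms} in the $y$-variable. For each fixed $x\in\F_p^{n_1}$, the map $y\mapsto b(x,y)$ is a linear form in $y$ with coefficient vector $A^Tx\in\F_p^{n_2}$, where $A$ is the matrix representing $b$; applying Proposition \ref{Equidistribution of high support linear forms} with distribution $D_2$, taking moduli, and averaging over $x\sim D_1^{n_1}$ gives
\[|\bias_{t,(D_1,D_2)}(b)| \leq \E_{x}(1-\eta_2)^{|\supp(A^Tx)|},\]
with $\eta_2=c_2\pi^2/p^2$. The task thus reduces to showing that $|\supp(A^Tx)|$ is reasonably large with high probability over $x\sim D_1^{n_1}$.

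To exploit that $\rk A=k$, I would choose subsets $I\subseteq[n_2]$ and $J\subseteq[n_1]$ of size $k$ such that the submatrix $B:=A^T[I,J]$ is invertible; writing $C:=A^T[I,J^c]$ we have $(A^Tx)_I=Bx_J+Cx_{J^c}$, and therefore $|\supp(A^Tx)|\geq|\supp((A^Tx)_I)|$. Once $x_{J^c}$ is fixed, the map $x_J\mapsto Bx_J+Cx_{J^c}$ is an affine bijection of $\F_p^k$, and since $\max_w D_1(w)\leq 1-c_1$ every value $z\in\F_p^k$ is attained with conditional probability at most $(1-c_1)^k$. Consequently, for each $S\subseteq[k]$,
\[\P\bigl[\supp((A^Tx)_I)=S\bigr]\leq(p-1)^{|S|}(1-c_1)^k.\]

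Finally, I would introduce a threshold $m_0\asymp c_1k/\log(2ep/c_1)$ and split
\[\E_x(1-\eta_2)^{|\supp((A^Tx)_I)|}\leq(1-\eta_2)^{m_0}+\P\bigl[|\supp((A^Tx)_I)|<m_0\bigr].\]
The first term is at most $\exp(-\eta_2 m_0)\leq\exp(-c_1c_2\pi^2k/(2p^2\log(2ep/c_1)))$, matching the target. The second, by the union bound above, is bounded by $\sum_{s<m_0}\binom{k}{s}(p-1)^s(1-c_1)^k$; using $\binom{k}{s}\leq(ek/s)^s$ and $(1-c_1)^k\leq e^{-c_1k}$, with the chosen $m_0$ this quantity is of order $e^{-\Omega(c_1k)}$, dominated by the first term. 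Combining them and absorbing small-$k$ regimes (where the claimed bound already exceeds $1$) into the constant produces the factor of $3$.

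The main obstacle will be the bookkeeping in the second regime: one has to verify that for $m_0\asymp c_1k/\log(2ep/c_1)$, the combinatorial estimate $\binom{k}{m_0}(p-1)^{m_0}(1-c_1)^k$ is indeed of order $e^{-\Omega(c_1k)}$, which comes down to the elementary inequality $\log L\leq L$ for $L=\log(2ep/c_1)$. The specific denominator $\log(2ep/c_1)$ in the exponent of the target bound arises precisely from balancing the two terms of the split, and the constants have to be tracked carefully through the estimates to match the statement exactly.
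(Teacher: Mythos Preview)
Your proposal is correct and follows essentially the same approach as the paper: both arguments select a $k\times k$ invertible submatrix, use it to show that conditionally on the remaining coordinates the map to the coefficient vector is an affine bijection (so any particular value is hit with probability at most $(1-c_1)^k$), bound the probability that the support of the resulting coefficient vector falls below a threshold $u\asymp c_1 k/\log(2ep/c_1)$ via a union bound, and apply Proposition~\ref{Equidistribution of high support linear forms} on the complementary event. The paper's $X,Y,a_x$ are your $J,I,(A^Tx)_I$, and the threshold optimization is identical.
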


\begin{proof}
Since $b$ has rank $k$ we can find subsets $X\subset[n_1]$ and $Y\subset[n_2]$ of size $k$ such that the restriction of $b$ to $\F_p^X\times\F_p^Y$ has rank $k$. For each $x\in\F_p^{n_1}$ let us write it as $(x_1,x_2)$, where $x_1\in\F_p^X$ and $x_2\in\F_p^{[n_1]\setminus X}$. We also let $L_x:\F_p^{Y}\to\F_p$ be the linear form $y\mapsto b(x,y)$ and let $a_x\in\F_p^{Y}$ be the vector such that $L_x(y)=\sum_{j\in[n_2]}(a_x)_jy_j$ for every $y\in\F_p^{Y}$. 

Let us now fix $x_2\in\F_p^{[n_1]\setminus X}$. Then the map $x_1\mapsto a_{(x_1,x_2)}$ is an affine map from $\F_p^X$ to $\F_p^Y$, with linear part $a_{(x_1,0)}$ of full rank. Therefore, it is a bijection. It follows that for every $a\in\F_p^Y$, the probability that $a_{(x_1,x_2)}=a$ when $x_1\sim D_1^X$ is at most $(1-c_1)^k$. In particular, for any given $u\leq k/3$, the probability that $a_{(x_1,x_2)}$ has support size at most $u$ is at most
\[(1-c_1)^k\sum_{j=0}^u\binom kj(p-1)^j\leq 2(1-c_1)^k(p-1)^u\binom ku\leq 2e^{-c_1k}(epk/u)^u.\]

If the support size is greater than $u$, then by Proposition \ref{Equidistribution of high support linear forms}, \[|\E_{y\sim D_2^{n_2}}\omega_p^{b(x,y)}|\leq (1-c_2\pi^2/p^2)^u\leq e^{-c_2\pi^2u/p^2}.\]
Therefore, for any $t\in\F_p$, any $x_2\in\F_p^{[n_1]\setminus X}$ and any $u\leq k$ we have the upper bound
\[|\E_{x_1\sim D_1^X, y\sim D^{n_2}}\omega_p^{tb((x_1,x_2),y)}|\leq 2e^{-c_1k}(epk/u)^u+e^{-c_2\pi^2u/p^2}.\]
Let us write $\theta$ for $u/k$. Applying the triangle inequality, we deduce that
\[|\bias_{t,(D_1,D_2)}b|\leq 2e^{-c_1k}(ep/\theta)^{\theta k}+e^{-c_2\pi^2\theta k/p^2}.\]
Setting $\theta=c_1/2\log(2ep/c_1)$, one can check that $(ep/\theta)^{\theta k}\leq e^{c_1k/2}$, and also that if $k\geq 1$ then $e^{-c_1k/2}\leq e^{-c_2\pi^2\theta k/p^2}$. We therefore obtain an upper bound of $3e^{-c_1c_2\pi^2k/2p^2\log(2ep/c_1)}$. If $k=0$ then this upper bound holds trivially.
\end{proof}

In this section we are proving our main results by showing that high rank implies low bias. Later in the paper, it will be more convenient to switch things round and prove that high bias implies low rank. With that in mind, we note the following simple corollary of Proposition \ref{Equidistribution of high rank bilinear forms}.

\begin{corollary} \label{flipped bilinear case} Let $p$ be a prime, let $c_1,c_2\in(0,1/2]$, and let $D_1,D_2$ be distributions on $\F_p$ such that $D_i(x)\leq 1-c_i$ for all $x\in\F_p$ and for $i=1,2$. Let $\e>0$, let $n_1,n_2$ be positive integers, and let $b:\F_p^{n_1}\times\F_p^{n_2}\to\F_p$ be a bilinear form with $|\bias_{t,(D_1,D_2)}b|\geq\e$ for some non-zero $t\in\F_p$. Then $b$ has rank at most $K_{p,c_1,c_2}(\epsilon)$, where \[K_{p,c_1,c_2}(\epsilon) = 2c_1^{-1}c_2^{-1}\pi^{-2}p^2\log(2ep/c_1)\log(3/\e).\]
\end{corollary}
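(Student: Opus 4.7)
The plan is to obtain the corollary as an immediate contrapositive of Proposition \ref{Equidistribution of high rank bilinear forms}, with just a one-line algebraic inversion of the bound. Let $k$ denote the rank of $b$. By the proposition, we have the unconditional inequality
\[ |\bias_{t,(D_1,D_2)} b| \le 3 e^{-c_1 c_2 \pi^2 k / (2 p^2 \log(2ep/c_1))}. \]
Assuming $|\bias_{t,(D_1,D_2)} b| \ge \epsilon$ and combining, we get $\epsilon \le 3 e^{-c_1 c_2 \pi^2 k / (2 p^2 \log(2ep/c_1))}$, which, after taking logarithms, rearranges to
\[ k \le \frac{2 p^2 \log(2ep/c_1)}{c_1 c_2 \pi^2} \log(3/\epsilon) = K_{p,c_1,c_2}(\epsilon), \]
giving the required upper bound.

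There is essentially no obstacle here: the work has already been done in proving the proposition. The only thing to verify is that the algebra matches the stated constant $K_{p,c_1,c_2}(\epsilon)$, and I would briefly note that the bound is vacuous (and so trivially true) in the degenerate case $\epsilon > 3$, which cannot occur anyway since the bias is bounded in modulus by $1$. No separate handling of $k=0$ is needed because the bound in the proposition already covered that case.
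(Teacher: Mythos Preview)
Your proof is correct and follows essentially the same approach as the paper: both apply Proposition \ref{Equidistribution of high rank bilinear forms} directly and then invert the resulting inequality by taking logarithms to obtain the stated bound $K_{p,c_1,c_2}(\epsilon)$.
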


\begin{proof}
This is just a back-of-envelope calculation, but for the convenience of the reader we include it. By Proposition \ref{Equidistribution of high rank bilinear forms}, we have the inequality
\[3e^{-c_1c_2\pi^2\rk b/2p^2\log(2ep/c_1)}\geq\e.\]
It follows that
\[c_1c_2\pi^2\rk b/2p^2\log(2ep/c_1)\leq\log(3/\e),\]
and from that we obtain the bound stated.
\end{proof}

\subsection{The quadratic case}

We now begin the proof of Theorem \ref{Main theorem} in the case of a polynomial of degree $2$. First, we recall the definition of box norms and a standard fact about them, which for convenience we give in full, since it may be hard to find a proof of the precise formulation we give. 

\begin{definition} Let $W_1$ and $W_2$ be finite sets and let $f:W_1\times W_2\to\C$. Let $U_1$ and $U_2$ be independent random variables with $U_i$ taking values in $W_i$ for $i=1,2$. Let $U_1'$ and $U_2'$ be copies of $U_1$ and $U_2$, respectively, with $U_1,U_1',U_2$ and $U_2'$ all independent. The \emph{box norm of $f$ with respect to $U_1$ and $U_2$} is the quantity $\|f\|_\square$ defined by the formula
\[\|f\|_\square^4=\E f(U_1,U_2)\overline{f(U_1,U_2')}\,\overline{f(U_1',U_2)}f(U_1',U_2').\]
\end{definition}

The standard fact we shall use is the following.

\begin{lemma}\label{Decoupling for one coordinate} Let $f,U_1,U_1',U_2,U_2'$ be as above. Then $\|f\|_\square\geq|\E f|$.
\end{lemma}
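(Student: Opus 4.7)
The plan is to derive the inequality by two applications of the Cauchy--Schwarz inequality, one in each of the two coordinates, which is the standard route to the box-norm estimate. Before doing so, I would quickly verify that $\|f\|_\square^4$ is a non-negative real number, so that $\|f\|_\square$ is well-defined as its non-negative fourth root. This follows from the observation that after conditioning on $(U_2, U_2')$ the four-fold expectation factors as
\[\E_{U_1}\!\bigl[f(U_1, U_2)\overline{f(U_1, U_2')}\bigr] \cdot \overline{\E_{U_1}\!\bigl[f(U_1, U_2)\overline{f(U_1, U_2')}\bigr]} = \Bigl|\E_{U_1} f(U_1, U_2)\overline{f(U_1, U_2')}\Bigr|^2,\]
so that $\|f\|_\square^4 = \E_{U_2, U_2'}\bigl|\E_{U_1} f(U_1, U_2)\overline{f(U_1, U_2')}\bigr|^2 \geq 0$.

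For the main inequality, I would first write $\E f = \E_{U_1}\bigl[\E_{U_2} f(U_1, U_2)\bigr]$ and apply Cauchy--Schwarz in $U_1$ to get
\[|\E f|^2 \leq \E_{U_1}\bigl|\E_{U_2} f(U_1, U_2)\bigr|^2 = \E_{U_1, U_2, U_2'} f(U_1, U_2)\overline{f(U_1, U_2')},\]
where in the second step I re-expressed the squared modulus using the independent copy $U_2'$ of $U_2$. Swapping the order of expectations, the right-hand side equals $\E_{U_2, U_2'} h(U_2, U_2')$ where $h(U_2, U_2') := \E_{U_1} f(U_1, U_2)\overline{f(U_1, U_2')}$.

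A second application of Cauchy--Schwarz, now in $(U_2, U_2')$, gives
\[\bigl(\E_{U_2, U_2'} h(U_2, U_2')\bigr)^2 \leq \E_{U_2, U_2'}\bigl|h(U_2, U_2')\bigr|^2.\]
Expanding $|h|^2$ by reintroducing an independent copy $U_1'$ of $U_1$ yields exactly $\|f\|_\square^4$. Chaining the two inequalities gives $|\E f|^4 \leq \|f\|_\square^4$, and taking fourth roots concludes the proof.

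There is no real obstacle here: the whole argument is a bookkeeping exercise in introducing the dummy copies $U_1'$ and $U_2'$ in the right order. The only point at which one could slip up is in the second Cauchy--Schwarz step, since $h$ is complex-valued in general; but $|\E h|^2 \leq \E |h|^2$ is just the ordinary Cauchy--Schwarz inequality against the constant function $1$ and requires no positivity of $h$ itself.
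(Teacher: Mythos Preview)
Your proof is correct and follows essentially the same approach as the paper: two successive applications of Cauchy--Schwarz, first pulling $U_1$ outside to introduce $U_2'$, then pulling $(U_2,U_2')$ outside to introduce $U_1'$. The paper's version is terser and omits the non-negativity check for $\|f\|_\square^4$, but the logical content is identical.
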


\begin{proof}
By Cauchy-Schwarz,
\begin{align*}
|\E_{U_1,U_2} f(U_1,U_2)|^2&\leq\E_{U_1}|\E_{U_2}f(U_1,U_2)|^2\\
&=\E_{U_1}\E_{U_2,U_2'}f(U_1,U_2)\overline{f(U_1,U_2')}.\\
\end{align*}
Squaring both sides and applying Cauchy-Schwarz again, this time taking $U_2$ and $U_2'$ outside the modulus sign and keeping $U_1$ inside, we conclude that $|\E f|^4\leq\|f\|_\square^4$.
\end{proof}

We will also use Theorem \ref{Disjoint rank minors theorem} for $d=2$, for which a linear bound is available. 

\begin{proposition} [\cite{K.}, Proposition 5.1]\label{Disjoint rank for matrices} Let $\F$ be a field, let $ A$ be an $ n \times n$ matrix over $ \F$, and let $k$ be a positive integer. If for each $n \times n$ diagonal matrix $D$ we have $\rk(A+D) \ge k$ then there exist disjoint $X,Y \subset \lbrack n \rbrack$ such that $\rk A(X \times Y) \ge k/3$. \end{proposition}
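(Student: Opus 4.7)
\emph{Proof proposal.} I would prove the contrapositive: if $\dpr A \le r$ --- meaning $\rk A(X \times Y) \le r$ for every pair of disjoint $X, Y \subseteq [n]$ --- then there exists an $n \times n$ diagonal matrix $D$ with $\rk(A+D) \le 3r$. The stated result follows by taking $r$ to be the largest integer strictly below $k/3$.

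First I would pick disjoint representatives $X_0, Y_0 \subseteq [n]$ with $|X_0| = |Y_0| = r$ and $\rk A(X_0 \times Y_0) = r$, further arranging (by replacing $X_0$ and $Y_0$ with suitable $r$-element subsets of larger witnessing sets if needed) that the rows of $A$ indexed by $X_0$ are linearly independent in $\F^n$ and that the columns of $A$ indexed by $Y_0$ are linearly independent. I would then write $A = M_1 + M_2 + M_3$, where $M_1$ agrees with $A$ on rows in $X_0$ and is zero elsewhere, $M_2$ agrees with $A$ on rows in $[n] \setminus X_0$ and columns in $Y_0$, and $M_3$ agrees with $A$ on rows in $[n] \setminus X_0$ and columns in $[n] \setminus Y_0$. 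Then $M_1$ has only $r$ nonzero rows, hence $\rk M_1 \le r$, and similarly $\rk M_2 \le r$. The task reduces to producing a diagonal $D$ --- necessarily supported on positions $(w,w)$ for $w \in W' := [n] \setminus (X_0 \cup Y_0)$, since the other diagonal positions fall into $M_1$ or $M_2$ which already account for the allowed rank --- with $\rk(M_3 + D) \le r$, giving $\rk(A+D) \le 3r$.

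For that crucial third step, I would exploit the extremality of $X_0, Y_0$. Because $\rk A(X_0 \times Y_0) = r$ is already maximum among disjoint pairs and $A$ restricted to rows $X_0$ has full row rank $r$, for every $w \in W'$ the row $A(\{w\}, Y_0)$ must lie in the row span of $A(X_0 \times Y_0)$: otherwise replacing $X_0$ by $X_0 \cup \{w\}$ would strictly increase the disjoint rank, contradicting $\dpr A \le r$. The dual constraint pins down columns of $A$ whose index lies in $W'$. Combined with the bound $\dpr A \le r$ applied to disjoint pairs inside $W'$, these structural relations should produce, after a judicious choice of diagonal $D|_{W'}$, the desired bound $\rk(M_3 + D|_{W'}) \le r$.

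The main obstacle, and the part I expect to be hardest, is obtaining exactly the sharp constant $3$. A naive decomposition of $M_3$ into its four sub-blocks --- indexed by whether rows lie in $Y_0$ or $W'$ and columns in $X_0$ or $W'$ --- directly bounds three off-diagonal sub-blocks by $r$ each but leaves the residual $A(W' \times W')$ block, whose essential rank under a direct induction on $n$ would inflate the constant from $3$ to something larger. Attaining $3r$ therefore requires either strengthening the extremal choice (for instance, also maximizing $|X_0| + |Y_0|$ so that the $(W',W')$-block collapses up to modifying its diagonal), or replacing the recursion by an iterative LU-style pivoting scheme that avoids cumulative loss. The detailed argument is carried out in \cite{K.}.
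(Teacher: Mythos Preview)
The paper does not supply a proof of this proposition: it is quoted as Proposition~5.1 of \cite{K.} and invoked as a black box. There is thus nothing in the present paper to compare your outline against.

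That said, your proposal is itself not a proof. The decomposition $A = M_1 + M_2 + M_3$ and the bounds $\rk M_1, \rk M_2 \le r$ are immediate from the sizes of $X_0$ and $Y_0$, but the heart of the matter is your claimed third step, that some diagonal $D$ supported on $W'$ gives $\rk(M_3 + D) \le r$. You correctly flag this as the hard part and correctly observe that the obvious recursion on the $W' \times W'$ sub-block would inflate the constant; but you then do not supply an argument, instead pointing to \cite{K.} exactly as the paper does. The extremality observations you sketch (that rows and columns of $A$ indexed by $W'$ fall into the spans determined by $X_0$ and $Y_0$ on the appropriate slices) are correct and relevant, but you have not shown how to assemble them into the bound $\rk(M_3 + D) \le r$, nor is it clear that the particular three-piece decomposition you chose is the one that delivers the sharp constant~$3$ without additional ideas. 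In short: your proposal and the paper agree in deferring the substance to \cite{K.}, and neither contains the actual argument.
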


Our third preparatory result provides us with a useful sufficient condition for a function to have small bias.

We now introduce an analogue for quadratic forms of the notion of essential rank from Definition \ref{Essential and partition disjoint rank definition} for bilinear forms.

\begin{definition} Let $q: \F_p^n \rightarrow \F_p$ be a quadratic form. The \emph{essential rank} of the quadratic form is the quantity $\erk q= \min \rk (q+q_{\Delta})$, where $\rk$ is the notion of rank introduced in Definition \ref{rank of a polynomial}, and where the minimum is taken over all diagonal quadratic forms $q_\Delta:\F_p^n \rightarrow \F_p$, that is, forms of the type $x \mapsto \sum_{i=1}^n a_i x_i^2$ for some coefficients $a_1, \dots, a_n \in \F_p$. \end{definition}

\begin{lemma}\label{The Essential rank of a quadratic form is at least the essential rank of the associated bilinear form} Let $q: \F_p^n \rightarrow \F_p$ be a quadratic form and let $b: \F_p^n \times \F_p^n \rightarrow \F_p$ be a bilinear form such that $q(x) = b(x,x)$ for every $x \in \F_p^n$. Then $\erk q \le \erk b$. \end{lemma}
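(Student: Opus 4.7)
The plan is to show $\erk q \le \erk b$ by exhibiting, for any bilinear diagonal perturbation $V$ that realizes $\erk b$, a corresponding quadratic diagonal form $q_\Delta$ such that $\rk(q + q_\Delta) \le \pr(b + V)$. The idea is simply to pass from the partition rank decomposition of $b + V$ to a rank decomposition of $q + q_\Delta$ by setting the two variables equal.

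Concretely, first I would choose $V:[n]^2\to\F_p$ supported on the set $E=\{(i,i):i\in[n]\}$ such that $\pr(b+V)=\erk b=:k$. For order-$2$ tensors, partition rank coincides with the usual bilinear rank, so we may write
\[ b(x,y)+V(x,y)=\sum_{i=1}^k L_i(x)M_i(y) \]
for some linear forms $L_1,M_1,\dots,L_k,M_k:\F_p^n\to\F_p$. Specialising to $y=x$ gives
\[ q(x)+V(x,x)=\sum_{i=1}^k L_i(x)M_i(x). \]
Since $V$ is supported on the diagonal, $V(x,x)=\sum_{j=1}^n V(j,j)\,x_j^2$ is a diagonal quadratic form, so setting $q_\Delta(x):=V(x,x)$ we obtain
\[ q(x)+q_\Delta(x)=\sum_{i=1}^k L_i(x)M_i(x). \]
Each summand $L_iM_i$ is a product of two forms of degree at most $1$ whose degrees sum to at most $2$, so this is a decomposition of the shape required by Definition~\ref{rank of a polynomial}, case $d=2$. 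Hence $\rk(q+q_\Delta)\le k$, and therefore $\erk q\le\rk(q+q_\Delta)\le\erk b$.

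There is no real obstacle here; the only slight subtlety worth checking is that the partition-rank-$1$ summands of an order-$2$ tensor are genuinely of the form $L(x)M(y)$ with $L,M$ honest linear forms (no constant terms), so that after specialising $y=x$ we obtain quadratic forms of rank $1$ in the sense of Definition~\ref{rank of a polynomial} rather than something of a different shape. This is immediate from the fact that $b+V$ is a genuine bilinear form (so the partition-rank decomposition decomposes the associated matrix as a sum of rank-$1$ matrices), so the argument above is complete.
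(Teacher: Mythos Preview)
Your proof is correct and follows essentially the same approach as the paper's: both choose a diagonal perturbation realizing $\erk b$, write the resulting bilinear form as $\sum_{i=1}^k L_i(x)M_i(y)$, and specialise $y=x$ to obtain a rank decomposition of $q$ plus a diagonal quadratic form. The only difference is cosmetic (you phrase things via partition rank of the associated tensor, the paper speaks directly of bilinear rank), and your closing remark about the rank-$1$ summands being products of genuine linear forms is a helpful clarification that the paper leaves implicit.
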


\begin{proof}
If $\erk b\leq k$, then we can find a diagonal form $b_\Delta$ such that $\rk(b+b_\Delta)\leq k$. Let $q_\Delta(x)=b_\Delta(x,x)$ for each $x$. Then $q_\Delta$ is a diagonal quadratic form, and $q(x)+q_\Delta(x)=b(x,x)+b_\Delta(x,x)$ for every $x$. Since $b+b_\Delta$ has rank at most $k$, we can write $b(x,y)+b_\Delta(x,y)$ in the form $\sum_{i=1}^kf_i(x)g_i(y)$. But then $q(x)+q_\Delta(x)=\sum_{i=1}^kf_i(x)g_i(x)$, so by the definition of rank we are using, $q+q_\Delta$ has rank at most $k$.
\end{proof}

The next result is a simple technical lemma, which we state separately from the proof of the main result of the section because it will be used twice more in the paper.

\begin{lemma}\label{Reduction to one function}
Let $p$ be a prime, let $D$ be a distribution on $\F_p$, let $f_0,f_1, \dots, f_k: \F_p^n \rightarrow \F_p$ and let $F: \F_p^k \rightarrow \F_p$. If 
\[|\bias_{t,D} (f_0 + a_1 f_1 + \dots + a_k f_k)| \le p^{-k} \epsilon \] 
for each $t \in \F_p^*$ and for all $(a_1, \dots, a_k) \in \F_p^{k}$, then 
\[|\bias_{t,D} (f_0 + F \circ (f_1, \dots, f_k))| \le \epsilon \] 
for each $t \in \F_p^*$. \end{lemma}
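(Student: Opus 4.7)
The plan is to use Fourier inversion on $\F_p^k$ to express $\omega_p^{tF(y)}$ as a linear combination of additive characters $y \mapsto \omega_p^{a\cdot y}$ for $a \in \F_p^k$, and then apply the hypothesis term by term.

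First, for each fixed $t \in \F_p^*$, consider the function $g_t: \F_p^k \to \C$ defined by $g_t(y) = \omega_p^{tF(y)}$. By the standard Fourier expansion on the group $\F_p^k$, one has
\[g_t(y) = \sum_{a \in \F_p^k} \hat{g}_t(a)\, \omega_p^{a \cdot y}, \qquad \hat{g}_t(a) = p^{-k} \sum_{y \in \F_p^k} g_t(y)\, \omega_p^{-a \cdot y},\]
and since $|g_t(y)| = 1$ everywhere, we have the crude bound $|\hat{g}_t(a)| \le 1$ for every $a$, hence $\sum_{a \in \F_p^k} |\hat{g}_t(a)| \le p^k$.

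Next I would substitute $y = (f_1(x), \dots, f_k(x))$ and multiply through by $\omega_p^{tf_0(x)}$. Taking expectations over $x \sim D^n$ and pulling the finite sum outside, we get
\[\bias_{t,D}\bigl(f_0 + F \circ (f_1, \dots, f_k)\bigr) = \sum_{a \in \F_p^k} \hat{g}_t(a)\, \E_{x \sim D^n} \omega_p^{tf_0(x) + a_1 f_1(x) + \dots + a_k f_k(x)}.\]
For each $t \in \F_p^*$, the inner expectation equals $\bias_{t,D}\bigl(f_0 + (a_1/t) f_1 + \dots + (a_k/t) f_k\bigr)$, since we can factor out $t$ inside the exponent. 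As $a$ ranges over $\F_p^k$, so does $(a_1/t, \dots, a_k/t)$, so the hypothesis gives that each such expectation has modulus at most $p^{-k}\epsilon$.

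Applying the triangle inequality and combining with the bound $\sum_a |\hat{g}_t(a)| \le p^k$ then yields
\[\bigl|\bias_{t,D}\bigl(f_0 + F \circ (f_1, \dots, f_k)\bigr)\bigr| \le \sum_{a \in \F_p^k} |\hat{g}_t(a)| \cdot p^{-k}\epsilon \le p^k \cdot p^{-k}\epsilon = \epsilon,\]
which is the desired conclusion. There is essentially no obstacle here; the only point that requires minor care is the rescaling of $a$ by $t^{-1}$ so that the inner expectations fall under the form covered by the hypothesis, and ensuring that the crude bound $|\hat g_t(a)| \le 1$ is enough (it is, though Cauchy--Schwarz would give the slightly better $p^{k/2}$ and correspondingly weaker hypothesis, which is not needed here).
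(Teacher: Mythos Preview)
Your proof is correct and is essentially identical to the paper's own argument: both apply Fourier inversion to the function $y\mapsto\omega_p^{tF(y)}$, use the crude bound $|\hat g_t(a)|\le 1$ on each Fourier coefficient, rescale the $a_i$ by $t^{-1}$ to match the hypothesis, and conclude via the triangle inequality over the $p^k$ terms. Your closing remark about the sharper $p^{k/2}$ bound via Cauchy--Schwarz (equivalently Parseval) is a nice observation, though as you note it is not needed here.
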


\begin{proof}
By definition,
\[\bias_{t,D}(f_0+F\circ(f_1,\dots,f_k))=\E_{x\sim D^n}\omega_p^{tf_0(x)}\omega_p^{tF(f_1(x),\dots,f_k(x))}.\]
Let $\phi(u_1,\dots,u_k)=\omega_p^{tF(u_1,\dots,u_k)}$ for each $u_1,\dots,u_k$. Then by the Fourier inversion formula,
\[\phi(u_1,\dots,u_k)=\sum_{a_1,\dots,a_k}\hat\phi(a_1,\dots,a_k)\omega_p^{a_1u_1+\dots+a_ku_k}.\]
Noting that $|\hat\phi(a_1,\dots,a_k)|\leq\|\phi\|_\infty\leq 1$ for every $a_1,\dots,a_k$, we may conclude that
\begin{align*}|\E_{x\sim D^n}\omega_p^{tf_0(x)}\omega_p^{tF(f_1(x),\dots,f_k(x))}|&\leq\sum_{a_1,\dots,a_k}|\E_{x\sim D^n}\omega_p^{tf_0(x)+a_1f_1(x)+\dots+a_kf_k(x)})|\\
&=\sum_{a_1,\dots,a_k}|\bias_{t,D}(f_0+t^{-1}(a_1f_1+\dots+a_kf_k))|.\\
\end{align*}
By our assumption, each summand in the last expression is at most $p^{-k}\e$, so the result follows.\end{proof}

We are now ready to start the proof of Theorem \ref{Main theorem} for polynomials of degree $2$.

\begin{proposition} Let $p \ge 3$ be a prime, let $S$ be a subset of $\F_p$ with $|S|\geq 2$, let $P: \F_p^n \rightarrow \F_p$ be a polynomial of degree $2$. If $|S|\geq 3$, then
\[|\bias_{t,S}P|\leq 3e^{-\rk_2P/2^{8}p^4(\log p)^2}\]
for every $t\ne 0$, while if $|S|=2$, then
\[|\bias_{t,S}P|\leq 3e^{-(\rk_SP-2)/64p^2\log p}.\]
\end{proposition}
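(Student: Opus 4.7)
The plan is to execute the strategy outlined in the introduction: decouple the quadratic polynomial to an associated bilinear form via a box-norm argument, then chain Corollary~\ref{flipped bilinear case}, Proposition~\ref{Disjoint rank for matrices}, and Lemma~\ref{The Essential rank of a quadratic form is at least the essential rank of the associated bilinear form} to extract a low-rank decomposition of the quadratic part of $P$, and finally treat the leftover diagonal piece separately for $|S|=2$ and $|S|\geq 3$. Concretely, write $P = q + l + c$ with $q$ the quadratic part, and (using $p \geq 3$) let $b$ be the symmetric bilinear form with $q(x) = b(x,x)$. For an arbitrary partition $[n] = X \sqcup Y \sqcup Z$, set $h_w(u,v) = \omega_p^{tP(u,v,w)}$ for $u \in S^X$, $v \in S^Y$, $w \in S^Z$. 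Jensen's inequality in $w$ followed by Lemma~\ref{Decoupling for one coordinate} gives $|\bias_{t,S}P|^4 \leq \E_w\|h_w\|_\square^4$. When the box norm is expanded with $U_1,U_1' \sim D^X$ and $U_2,U_2' \sim D^Y$ (where $D$ is uniform on $S$), every term of the alternating sum that depends on only one of $u,v,w$ cancels, and the exponent reduces to $2t\,\beta(U_1 - U_1',\, U_2 - U_2')$, where $\beta$ is the restriction of $b$ to $X \times Y$. Hence $\E_w\|h_w\|_\square^4 = \bias_{2t,(D-D,D-D)}\beta$ (independent of $w$), where $D - D$ denotes the distribution of $X - Y$ for independent $X, Y \sim D$. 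Since $\|D-D\|_\infty = 1/|S| \leq 1/2$, Corollary~\ref{flipped bilinear case} with $c_1 = c_2 = 1/2$ gives $\rk\beta \leq K = O(p^2 \log p\, \log(1/\epsilon))$ for every disjoint $X, Y$. Proposition~\ref{Disjoint rank for matrices} then upgrades this to $\erk b \leq 3K$, and Lemma~\ref{The Essential rank of a quadratic form is at least the essential rank of the associated bilinear form} yields $\erk q \leq 3K$; thus there is a diagonal $q_\Delta(x) = \sum_i a_i x_i^2$ with $q_1 := q + q_\Delta$ of degree-$2$ rank at most $3K$. Setting $P_2 := l + c - q_\Delta$ gives $P = q_1 + P_2$ with $P_2$ a sum of univariate functions of the $x_i$'s plus a constant.

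When $|S| = 2$, say $S = \{s_1, s_2\}$, the identity $x^2 = (s_1+s_2)x - s_1 s_2$ holds on $S$, so $P_2|_{S^n}$ is a linear form plus a constant. Hence $P$ agrees on $S^n$ with $q_1 + l' + c'$ for some linear form $l'$ and constant $c'$, and $\rk_S P \leq 3K + O(1)$, which after tracing through the constants matches the stated bound.

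When $|S| \geq 3$, the diagonal form $q_\Delta$ is genuinely quadratic on $S^n$ and cannot be absorbed; instead we couple it with the low-rank $q_1$ via Lemma~\ref{Reduction to one function}. Writing $q_1 = \sum_{j=1}^{K_0} Q_j R_j$ with $K_0 \leq 3K$ and each $\deg Q_j, \deg R_j \leq 1$, and applying the lemma with $f_0 = P_2$ and $(f_1,\dots,f_{2K_0}) = (Q_1,R_1,\dots,Q_{K_0},R_{K_0})$, we obtain $t \neq 0$ and coefficients $\alpha_j, \beta_j \in \F_p$ such that $P_3 := P_2 + \sum_j(\alpha_j Q_j + \beta_j R_j)$ satisfies $|\bias_{t,S}P_3| \geq p^{-2K_0}\epsilon$. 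Because adding linear forms preserves separability, $P_3(x) = \sum_i h_i(x_i) + \mathrm{const}$ with each $h_i(y) = -a_i y^2 + \tilde b_i y$. A coordinate $i$ is \emph{active} if $h_i$ is non-constant on $S$; for $|S| \geq 3$ this forces $(a_i, \tilde b_i) \neq (0,0)$, and the quadratic symmetry $y \leftrightarrow \tilde b_i/a_i - y$ (valid when $a_i \neq 0$) ensures each value of $h_i$ is attained at most twice on $S$, so the distribution of $h_i(x_i)$ has maximum mass at most $2/|S| \leq 2/3$. Lemma~\ref{average of roots of unity} with $c = 1/3$ then bounds each active factor by $1 - \pi^2/(3p^2)$, and comparing the resulting product with $p^{-2K_0}\epsilon$ yields $|I| = O(p^2(K_0 \log p + \log(1/\epsilon))) = O(p^4(\log p)^2 \log(1/\epsilon))$, where $I$ is the set of active coordinates. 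Since $\rk_2 P_3 = O(|I|)$ and $P = q_1 + P_3 - \sum_j(\alpha_j Q_j + \beta_j R_j)$, subadditivity of $\rk_2$ gives $\rk_2 P = O(K_0 + |I|) = O(p^4(\log p)^2 \log(1/\epsilon))$, matching the claimed bound. The main obstacle is precisely this coupling step: the reduction via Lemma~\ref{Reduction to one function} introduces a $p^{-2K_0}$ loss which, combined with the per-coordinate factor $1 - \pi^2/(3p^2)$, is responsible for the $p^4(\log p)^2$ scaling.
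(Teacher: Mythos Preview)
Your argument is correct and uses essentially the same ingredients as the paper's proof (box-norm decoupling to reduce to Corollary~\ref{flipped bilinear case}, then Proposition~\ref{Disjoint rank for matrices} and Lemma~\ref{The Essential rank of a quadratic form is at least the essential rank of the associated bilinear form} to control $\erk q$, then Lemma~\ref{Reduction to one function} and Lemma~\ref{average of roots of unity} for the diagonal remainder), only organized in the contrapositive direction: you assume $|\bias_{t,S}P|\geq\epsilon$ and run the decoupling over \emph{every} disjoint pair $X,Y$ before invoking Proposition~\ref{Disjoint rank for matrices}, whereas the paper fixes $r=\erk q$, selects one good bipartition, derives the estimate~\eqref{biasestimate}, and then optimizes over $r$ and $|\supp d|$.

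One small oversight in your $|S|=2$ paragraph: if $q$ happens to be diagonal then $q_1=q+q_\Delta=0$, so the representative $q_1+l'+c'$ has degree at most $1$ and $\rk_S P$ is computed as $\rk_1(l')$, which your argument as written does not bound. The paper handles this case explicitly by appealing to Proposition~\ref{Equidistribution of high support linear forms}; alternatively you can perturb $q_\Delta$ by a single term $x_1^2$ to force $\deg q_1=2$ at a cost of $+1$ in rank.
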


\begin{proof} Let $t\in\F_p^*$ be fixed throughout the proof. We write $P = q + l + c$, where $q$, $l$, $c$ are the quadratic, linear and constant parts of $P$, respectively. Let $b: \F_p^n \times \F_p^n \rightarrow \F_p$ be the unique symmetric bilinear form such that $q(x) = b(x,x)$ for every $x \in \F_p^n$. Note that we have the polarization-type identity
\[q(x+y)-q(x+y')-q(x'+y)+q(x'+y')=2b(x-x',y-y'),\]
from which it follows that the same identity holds with $P$ replacing $q$.

Let $r=\erk q$. Then by Lemma \ref{The Essential rank of a quadratic form is at least the essential rank of the associated bilinear form} we have $\erk b \ge r$. By Proposition \ref{Disjoint rank for matrices} there exists a bipartition $\{X,Y\}$ of $\lbrack n \rbrack$ such that the restriction $b(\F^X \times \F^Y)$ has rank at least $r/3$. 

For $x$ chosen uniformly at random from $S^n$ let $U_1,U_1'$ be copies of $x(X)$ and let $U_2,U_2'$ be copies of $x(Y)$, with all four random variables being independent. Let $f(U_1,U_2)=\omega_p^{tP(U_1,U_2)}$, where we are writing $(U_1,U_2)$ for the vector $x\in\F_p^n$ such that $x(X)=U_1$ and $x(Y)=U_2$. We then have that $\bias_{t,S}P=\E f$. By Lemma \ref{Decoupling for one coordinate}, we also know that $|\E f|\leq\|f\|_\square$. But
\[\|f\|_\square^4=\E \omega_p^{t(P(U_1,U_2)-P(U_1,U_2')-P(U_1',U_2)+P(U_1',U_2'))}=\E\omega_p^{2tb(U_1-U_1',U_2-U_2')},\]
where the last equality comes from the polarization identity mentioned earlier.

Let $\mu_S$ be the uniform distribution on $S$ and let $D=\mu_S*\mu_{-S}$. That is, $D$ is the distribution on $\F_p$ where $D(u)=\P[v-w=u]$ when $v,w$ are chosen uniformly from $S$. Note that since $|S|\geq 2$, the probability $\P[v-w=u]$ is at most 1/2. 

The last expression above can be rewritten as 
\[\E_{x\sim D^X, y\sim D^Y}\omega_p^{2tb(x,y)}=\bias_{2t,(D,D)}b.\]
Since $\max_uD(u)\leq 1/2$ we can apply Proposition \ref{Equidistribution of high rank bilinear forms} with $c_1=c_2=1/2$ and $k=r/3$ to deduce that $|\bias_{2t,(D,D)}b|\leq 3e^{-\pi^2r/24p^2\log(4ep)}$. From this it follows that 
\begin{equation}\label{biasestimate}
|\bias_{t,S}P|\leq 3e^{-\pi^2r/96p^2\log(4ep)}.
\end{equation}

This is not yet what we want, because $r=\erk q$, which is not necessarily the same as $\rk_2P$. To complete the proof, we observe first that we can write $P$ in the form
\[P(x)=d(x)+\sum_{j=1}^r\phi_j(x)\psi_j(x)+\theta(x),\]
where $d$ is a diagonal quadratic form, the $\phi_j$ and $\psi_j$ are linear forms, and $\theta$ is an affine form (all the forms being from $\F_p^n$ to $\F_p$). 

By Lemma \ref{Reduction to one function},  $|\bias_{t,S}P|$ is at most $p^{2r+1}$ times the maximum over all $a_1,\dots,a_{2r+1}$ of
\[|\bias_{t,S}(d+a_1\phi_1+a_2\psi_1+\dots+a_{2r-1}\phi_r+a_{2r}\psi_r+a_{2r+1}\theta)|,\]
which is equal to
\[|\E_{x\in S^n}\omega_p^{td(x)+a_1\phi_1(x)+a_2\psi_1(x)+\dots+a_{2r-1}\phi_r(x)+a_{2r\psi(x)}+a_{2r+1}\theta(x)}|.\]

The expectation over $x$ can be written as a product of the form $\prod_{i=1}^n\E_{x_i\in S}\omega_p^{P_i(x_i)}$, where each $P_i:\F_p\to\F_p$ is a polynomial of degree at most 2. We now consider the cases $|S|\geq 3$ and $|S|=2$ separately.

Write $\supp(d)$ for the support of $d$, meaning that if $d(x)=\sum_i\lambda_ix_i^2$, then $\supp(d)=\{i:\lambda_i\ne 0\}$. Then for each $i\in d$ the polynomial $P_i$ has degree exactly 2 and therefore takes each value at most twice. In particular, if $|S|\geq 3$, then we can apply Lemma \ref{average of roots of unity} with $c=1/3$, to deduce that $|\E_{x_i\in S}\omega_p^{P_i(x_i)}|\leq 1-\pi^2/3p^2$. Therefore, 
\[|\E_{x\in S^n}\omega_p^{td(x)+a_1\phi_1(x)+a_2\psi_1(x)+\dots+a_{2r-1}\phi_r(x)+a_{2r\psi(x)}+a_{2r+1}\theta(x)}|\leq(1-\pi^2/3p^2)^{|\supp(d)|}.\]
It follows that 
\[|\bias_{t,S}P|\leq p^{2r+1}(1-\pi^2/3p^2)^{|\supp(d)|}\leq e^{(2r+1)\log p-\pi^2|\supp(d)|/3p^2}.\]

But $\rk_2d\leq|\supp(d)|$, so $r+|\supp(d)|+1\geq\rk_2P$, so for every $r,s$ with $r+s+1\leq\rk_2P$ we obtain an upper bound for $|\bias_{t,S}P|$ of 
\[\max\{3e^{-\pi^2r/96p^2\log(4ep)},e^{(2r+1)\log p-\pi^2s/3p^2}\}.\]
Choosing $s$ to be $\rk_2P/2$ and $r$ to be $\pi^2\rk_2P/32p^2\log p$, we obtain an upper bound of $3e^{-\rk_2P/2^{8}p^4(\log p)^2}$ after a back-of-envelope calculation.

Now let us assume that $|S|=2$. Let $Q$ be a polynomial that agrees with $P$ on $S^n$ and is such that $\rk Q=\rk_S P$. Let $Q=q+l+c$, where $q, l$ and $c$ are the quadratic, linear and constant parts of $Q$. If $Q$ has degree $2$ (as opposed to degree $1$, which is also a possibility even if $P$ has degree $2$), then let $d$ be a diagonal quadratic form such that $\rk(q+d)=\erk q$. Then 
\[\rk_SP\leq\rk_S(P+d)+1=\rk_S(Q+d)+1\le\rk_S(q+d)+2\leq\rk(q+d)+2=\erk q+2.\]
Therefore, $\erk q\geq\rk_SP-2$, so from \eqref{biasestimate} it follows that $|\bias_{t,S}P|\leq 3e^{-\pi^2(\rk_SP-2)/96p^2\log(4ep)}$, which a simple calculation shows is at most $3e^{-(\rk_SP-2)/64p^2\log p}$.

If $Q$ has degree $1$, then $\rk_1Q=\rk_SP$, so by Proposition \ref{Equidistribution of high support linear forms} with $D$ the uniform distribution on $S$ (and hence with $c=1/2$) we have $|\bias_{t,S}P|\leq(1-\pi^2/2p^2)^{\rk_SP}$, which is smaller than the bound just proved when $Q$ has degree 2. This completes the proof.
\end{proof}

\bigskip
\bigskip

\section{The proof in the case of an alphabet with size 2} \label{Section: The proof in the case of an alphabet with size 2}

In this section we prove our main theorem in the case where $|S| = 2$. Our argument can be summarized as follows. We begin by proving in particular that if $S$ is a non-empty subset of $\F_p$ with $|S| \ge 2$ and $A$ is a subset of $S^n$ dense inside $S^n$, then $(p-1)A$ is dense in $\F_p^n$. Using a small strengthening of this fact, and using the fact that the bias of a $d$-linear form $m: (\F_p^n)^d \rightarrow \F_p$ is the average of the biases of the $(d-1)$-linear forms obtained by fixing the first vector given to $m$, we are then able to prove a slight strengthening of Proposition \ref{bounded analytic rank implies bounded partition rank for restricted alphabets}. It then suffices to show that the task of proving that a polynomial $P$ of degree $d$ with high bias has bounded rank can be reduced to this strengthening. To do so, we shall associate a $d$-linear form $m$ with the polynomial $P$, and the assumption that $|S| = 2$ will allow us to ensure (in a way that does not work for larger alphabets) that $m$ has high essential partition rank. Applying Theorem \ref{Disjoint rank minors theorem} and using the obvious generalization of Lemma \ref{Decoupling for one coordinate} to $d $ variables we then be able to conclude in rather short order.

For the rest of the paper we shall use the following notation. Let $G$ be a finite abelian group. Given a probability distribution $D$ on $G$ we write $-D$ for the probability distribution on $G$ defined by $(-D)(x) = D(-x)$ for all $x \in G$. If $D$ and $D'$ are two probability distributions on $G$, then we write $D+D'$ for the probability distribution on $G$ defined by $(D+D')(x) = \sum_{y,z \in G: y+z=x} D'(y) D''(z)$ for all $x \in G$ -- that is, for the convolution of $D$ and $D'$, or equivalently for the distribution of the sum of two independent random variables, one with distribution $D$ and the other with distribution $D'$. We write $D-D'$ for the distribution $D+(-D')$. Given a probability distribution $D$ on $G$ and a positive integer $C$, we shall write $CD$ for the probability distribution $D + \dots + D$ ($C$ times) on $G$. If $A$ is a subset of $G$ and $D$ is a probability distribution on $G$ then we define the \emph{density of A with respect to D} by $\sum_{x \in A} D(x)$. In particular, the density of $A$ with respect to the uniform probability distribution on $G$ is the density $|A|/|G|$ of $A$ inside $G$ in the usual sense.

\subsection{Lemmas on the density of sumsets}

Later in the proof we shall find ourselves in a situation where we have a $d$-linear form $m:(\F_p^n)^d\to\F_p$ and a dense set $A$ of $u\in\F_p^n$ such that the $(d-1)$-linear form $m_u:(\F_p^n)^{d-1}\to\F_p$ defined by $m_u(x_2,\dots,x_d)=m(u,x_2,\dots,x_d)$ has low partition rank. However, this density will be with respect to a distribution $D^n$, whereas we would prefer the uniform distribution. The results of this section show that we can achieve this at the cost of passing to a suitable sumset of $A$, which, by the subadditivity of partition rank, will not be a problem for us. (This trick of passing to a sumset and exploiting subadditivity is often used to obtain a more structured set than $A$, though we shall not need the extra structure here.) 

Let $p$ be a prime. We begin by proving a result which, when iterated $p-2$ times, shows that if $A$ is a dense subset of $\{0,1\}^n$ then $(p-1)A$ is a dense subset of $\F_p^n$. 

\begin{proposition} \label{Density of sums of two sets} Let $r \ge 1$, $n \ge 1$ be positive integers, let $A \subset \{0, 1\}^n$ with density $\alpha$ inside $\{0,1\}^n$ and let $B \subset \{0, \dots, r\}^n$ with density $\beta$ inside $\{0, \dots, r\}^n$. Then $A+B$ has density at least $\alpha \beta$ inside $\{0, \dots, r+1\}^n$. \end{proposition}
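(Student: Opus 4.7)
The plan is to induct on $n$. For the base case $n=1$, a short case analysis on the four possibilities for $A\subseteq\{0,1\}$ suffices; the only non-trivial case is $A=\{0,1\}$, where $|A+B|=|B\cup(B+1)|\geq|B|+1$, and one checks $(|B|+1)(r+1)\geq|B|(r+2)$ using the fact that $|B|\leq r+1$.

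For the inductive step from $n-1$ to $n$, I would slice along the last coordinate. Write $A_i=\{a'\in\{0,1\}^{n-1}:(a',i)\in A\}$ with density $\alpha_i=|A_i|/2^{n-1}$ for $i\in\{0,1\}$, so that $\alpha_0+\alpha_1=2\alpha$; and similarly $B_j\subseteq\{0,\ldots,r\}^{n-1}$ with density $\beta_j$, so that $\sum_{j=0}^r\beta_j=(r+1)\beta$. The level-$k$ slice of $A+B$, for $k\in\{0,\ldots,r+1\}$, contains $A_i+B_j$ for every pair with $i\in\{0,1\}$, $j\in\{0,\ldots,r\}$, $i+j=k$. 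The inductive hypothesis applied to each pair $(A_i,B_j)$ yields density at least $\alpha_i\beta_j$ for $A_i+B_j$ inside $\{0,\ldots,r+1\}^{n-1}$. Averaging over $k$ then gives
\[\frac{|A+B|}{(r+2)^n}\geq\frac{1}{r+2}\Bigl(\alpha_0\beta_0+\alpha_1\beta_r+\sum_{k=1}^r\max(\alpha_0\beta_k,\alpha_1\beta_{k-1})\Bigr).\]

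It then suffices to prove that the bracketed quantity $L(\beta)$ satisfies $L(\beta)\geq(\alpha_0+\alpha_1)\sum_j\beta_j\cdot(r+2)/(2(r+1))$, which would give $|A+B|/(r+2)^n\geq\alpha\beta$ as desired. The key observation is that $L$ is convex in $\beta=(\beta_0,\ldots,\beta_r)$ (being a sum of maxima of pairs of linear functions, plus linear terms) and positively homogeneous of degree one. Rescaling so that $\sum_j\beta_j=1$, the minimum of $L$ on this simplex is attained at some vertex $e_{j^*}$, and a direct check of the three cases $j^*=0$, $j^*=r$, and $1\leq j^*\leq r-1$ shows that $L(e_{j^*})=\alpha_0+\alpha_1$ in every case (only the two consecutive terms involving $\beta_{j^*}$ are nonzero, and they contribute exactly $\alpha_0$ and $\alpha_1$). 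The required inequality then reduces to $1\geq(r+2)/(2(r+1))$, i.e.\ to $r\geq 0$.

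The main obstacle is spotting the convexity-plus-homogeneity reduction. The naive bounds $\max\geq$ average or $\max\geq$ one chosen summand both fail for skewed configurations of the $\beta_j$ (e.g.\ $\beta_0=\beta_r=0$ with mass in the middle, or $\alpha_0\neq\alpha_1$ with $\beta$ concentrated at one end), and it is only the uniformity of $L(e_{j^*})=\alpha_0+\alpha_1$ across every vertex that lets the argument go through without having to split into cases on the $\alpha_i$.
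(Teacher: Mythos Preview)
Your inductive reduction from $n$ to $n-1$ by slicing along the last coordinate is correct, and it lands you on exactly the same one-dimensional inequality that the paper isolates:
\[
\alpha_0\beta_0+\alpha_1\beta_r+\sum_{k=1}^{r}\max(\alpha_0\beta_k,\alpha_1\beta_{k-1})\;\ge\;\frac{r+2}{2(r+1)}(\alpha_0+\alpha_1)\sum_{j=0}^{r}\beta_j.
\]
The base case $n=1$ is also fine. The problem is your proof of this one-dimensional inequality. You argue that $L(\beta)$ is convex in $\beta$ and conclude that its minimum over the simplex $\sum_j\beta_j=1$ is attained at a vertex. That is backwards: for a convex function the \emph{maximum} over a simplex is attained at a vertex, not the minimum. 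Concretely, take $r=1$ and $\alpha_0=\alpha_1=1$: then $L(\beta_0,\beta_1)=\beta_0+\beta_1+\max(\beta_0,\beta_1)$, which on $\beta_0+\beta_1=1$ equals $1+\max(\beta_0,1-\beta_0)$ and is minimised at the interior point $\beta_0=\beta_1=\tfrac12$, giving $L=\tfrac32$, strictly less than the vertex value $L(e_0)=L(e_1)=2$. (Note that $\tfrac32$ is exactly the bound $\tfrac{r+2}{2(r+1)}(\alpha_0+\alpha_1)=\tfrac34\cdot 2$, so the inequality is tight at this interior point.) Positive $1$-homogeneity does not rescue this: a sublinear function still has its simplex minimum in the interior in general.

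So the one-dimensional inequality is genuinely the hard part, and your convexity shortcut does not prove it. The paper proves it by a different mechanism: assuming without loss of generality $\alpha_1=\mu\alpha_0$ with $\mu\le 1$, it shows (by a monotone modification argument, adjusting one $\beta_j$ at a time) that the worst case is when $(\beta_0,\dots,\beta_r)$ is a geometric progression with ratio $\mu$, and then verifies the resulting algebraic inequality $2(\mu+\cdots+\mu^r)\le r(1+\mu^{r+1})$ by the weighted AM--GM inequality. Your framework is compatible with this: you could keep the induction on $n$ and replace the flawed convexity step by the paper's geometric-progression reduction for the one-dimensional inequality.
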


\begin{proof}

For $f: \{0, 1\}^n \rightarrow \lbrack 0, \infty)$, $g: \{0, \dots, r\}^n \rightarrow \lbrack 0, \infty)$, the \emph{max-convolution} $f \circ g: \{0, \dots, r+1\}^n \rightarrow \lbrack 0, \infty)$ is defined by \[ f \circ g (z) = \max_{x+y = z} f(x) g(y). \] It suffices to show that \begin{equation} \E_z (f \circ g)(z) \ge (\E_x f(x)) (\E_y g(y)) \label{max-convolution inequality} \end{equation} where the expectations are taken over $\{0, \dots, r+1 \}^n$, $\{0,1\}^n$, and $\{0, \dots, r\}^n$, respectively, since if $f = 1_{A}$ and $g = 1_{B}$, then $f \circ g = 1_{A+B}$. The more general result is, however, convenient as an inductive hypothesis.

If $f$ and $g$ are constant functions equal to $\alpha$ and to $\beta$ respectively, then $f \circ g$ is the constant function equal to $\alpha \beta$, so in this case \eqref{max-convolution inequality} holds. For each $i \in \lbrack n \rbrack$ let $T_i$ be the operator that averages over the $i$th coordinate. That is, if $f:\{0,1\}^n\to[0,\infty)$ and $g:\{0,1,\dots,r\}^n\to[0,\infty)$, then
\[ T_if(x) = \E_{u \in \{0, 1\}} f(x_1, \dots, x_{i-1},u, x_{i+1}, \dots, x_n) \] 
and 
\[ T_ig(y) = \E_{u \in \{0, \dots, r\}} f(y_1, \dots, x_{i-1},u, x_{i+1}, \dots, x_n).\]
For each $i \in \lbrack n \rbrack$ the right-hand side of \eqref{max-convolution inequality} remains unchanged after replacing $f$ and $g$ by $T_if$ and $T_i g$, so to prove the inequality it suffices to show that we always have the inequality
\begin{equation} \E_z (T_i f \circ T_i g)(z) \le \E_z (f \circ g)(z). \label{averaging decreases the ell1 norm of max-convolution} \end{equation} Applying inequality \eqref{averaging decreases the ell1 norm of max-convolution} successively for $i=1, \dots, n$ then proves inequality \eqref{max-convolution inequality}.

To prove inequality \eqref{averaging decreases the ell1 norm of max-convolution}, we begin by showing that it follows from the one-dimensional case. Without loss of generality we can assume $i=n$. For each $z \in \{0, \dots, r+1 \}^n$, let $z' = (z_1, \dots, z_{n-1})$. Then we can write 
\[(f \circ g)(z',z_n) = \max_{x' + y' = z'} \max_{x_n + y_n = z_n} f(x',x_n) g(y',y_n) \] 
and
\[(T_n f \circ T_n g)(z',z_n) = \max_{x' + y' = z'} \E_{x_n} f(x',x_n) \E_{y_n} g(y',y_n) \] 
for all $z \in \{0, \dots, r+1 \}^n$. For a fixed $z' \in \{0, \dots, r+1 \}^{n-1}$ let $(x',y') \in \{0,1\}^{n-1} \times \{0, \dots, r \}^{n-1}$ such that $x'+y'=z'$ and $\E_{x_n} f(x',x_n) \E_{y_n} g(y',y_n)$ is maximized, and therefore equal to $\E_{z_n}(T_n f \circ T_n g)(z',z_n)$ for every $z_n$. If inequality \eqref{averaging decreases the ell1 norm of max-convolution} holds when $n=1$, then 
\[\E_{x_n} f(x',x_n) \E_{y_n} g(y',y_n) \le \E_{z_n} \max_{x_n + y_n = z_n} f(x',x_n) g(y',y_n).\] 
For each $z_n$, the maximum on the right-hand side is at most $(f\circ g)(z',z_n)$, so we deduce that
\[\E_{z_n} (T_n f \circ T_n g)(z',z_n) \le \E_{z_n} (f \circ g)(z',z_n).\] 
Averaging over all $z' \in \{0, \dots, r+1 \}^{n-1}$ we obtain inequality \eqref{averaging decreases the ell1 norm of max-convolution}.

In the $n=1$ case, writing $a_j = f(j)$ for each $j \in \{0, 1\}$ and $b_j = g(j)$ for each $j \in \{0, \dots, r \}$, inequality \eqref{averaging decreases the ell1 norm of max-convolution} can be rewritten as \begin{equation} \frac{r+2}{2(r+1)}(a_0 + a_1) (b_0 + \dots+ b_r) \le a_0 b_0 + (a_1 b_0 \vee a_0 b_1) + \dots + (a_1 b_{r-1} \vee a_0 b_r) + a_1 b_r. \label{1D inequality} \end{equation} Without loss of generality we can assume $a_1 \le a_0$ and that $a_0 = 1$. We write $a_1 = \mu a_0$ for some $\mu \le 1$. We now begin a second reduction where we show that it suffices to prove the inequality \eqref{1D inequality} in the case where $(b_0, \dots, b_r)$ is a geometric progression with ratio $\mu$. Indeed, assume that there exists $j \in \{ 0, r-1 \}$ such that for all $j' \in \{ 0, j-1 \}$, $b_{j'+1} = \mu b_{j'}$ but $b_{j+1} \neq \mu b_{j}$. Inequality \eqref{1D inequality} then simplifies to \begin{equation} \frac{r+2}{2(r+1)}(1+\mu) (b_0 + \dots+ b_r) \le b_0 + \dots + b_j + (\mu b_j \vee b_{j+1}) + \dots + (\mu b_{r-1} \vee b_r) + \mu b_r. \label{1D inequality with mu} \end{equation}

If $b_{j+1} > \mu b_j$ then there exists $t>0$ such that after decreasing $b_{j+1}$ by $t$ and increasing each of the quantities $b_0, \dots, b_j$ in such a way that $b_0 + \dots + b_j$ is increased by $t$ and $(b_0 \dots, b_j)$ remains a geometric progression with ratio $\mu$ after the modification, we have $b_{j+1} = \mu b_j$. This modification leaves the left-hand side of \eqref{1D inequality with mu} unchanged. On the right-hand side the sum $b_0 + \dots + b_j$ increases by $t$, the term $\mu b_j \vee b_{j+1}$ decreases by $t$, the term $\mu b_{j+1} \vee b_{j+2}$ (or instead, if $j=r-1$, the term $\mu b_r$) cannot increase, and all other terms are left unchanged, so the right-hand side cannot increase.

If $b_{j+1} < \mu b_j$ then there exists $t>0$ such that after increasing $b_{j+1}$ by $t$ and decreasing each of the quantities $b_0, \dots, b_j$ in such a way that $b_0 + \dots + b_j$ is decreased by $t$ and $(b_0 \dots, b_j)$ remains a geometric progression with ratio $\mu$ after the modification, we have $b_{j+1} = \mu b_j$. The left-hand side of \eqref{1D inequality with mu} is again unchanged. On the right-hand side the sum $b_0 + \dots + b_j$ decreases by $t$, the term $\mu b_j \vee b_{j+1}$ remains unchanged, the term $\mu b_{j+1} \vee b_{j+2}$ (or instead, if $j=r-1$, the term $\mu b_r$) increases by at most $\mu t \le t$, and all other terms are left unchanged, so the right-hand side cannot increase.

Iterating this process at most $r$ times we obtain $b_0, \dots, b_r$ in geometric progression with ratio $\mu$. In this case, assuming without loss of generality that $b_0 = 1$, the inequality \eqref{1D inequality with mu} becomes \[\frac{r+2}{2(r+1)}(1+\mu) (1+\mu+ \dots + \mu^r) \le 1 + \mu + \dots + \mu^{r+1} \] which simplifies to \[2 (\mu + \dots + \mu^r) \le r (1+\mu^{r+1}).\] This inequality holds, since by the weighted AM-GM inequality, for each $j \in \lbrack r \rbrack$ we have $\mu^j \le \frac{(r+1-j) + j \mu^{r+1}}{r+1}$ and moreover $\sum_{j=1}^r \frac{j}{r+1} = r/2$. \end{proof}

We shall use the next two lemmas to obtain a connection between the density of a subset $B$ of $S^n$ inside $S^n$ for some subset $S$ of $\F_p$ and the density of $B$ with respect to $D^n$ for some distribution $D$ on $\F_p$. 

\begin{lemma} \label{Writing a distribution as a convolution with 0,1 for F_p} Let $p\geq 3$ be a prime, let $U$ be the distribution on $\F_p$ that assigns probability 1/2 to 0 and 1, and let $D$ be a probability distribution on $\F_p$ such that $|D(x) - p^{-1}| \le p^{-2}$ for each $x \in \F_p$. Then there exists a probability distribution $E$ on $\F_p$ such that $D = U + E$. \end{lemma}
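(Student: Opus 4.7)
The plan is to invert the convolution by $U$ explicitly. The equation $D = U + E$, unfolded, is the recurrence $\tfrac{1}{2}(E(x) + E(x-1)) = D(x)$ for every $x \in \F_p$ (arithmetic in $\F_p$). To solve this cleanly I would first identify a convolution inverse $V$ of $U$ in the real group algebra $\R[\F_p]$ — a function $V:\F_p \to \R$ with $U + V = \delta_0$. Either by solving that recurrence with right-hand side $\delta_0$ or by Fourier-inverting $\hat U(\xi) = \tfrac{1}{2}(1 + \omega_p^\xi)$, which is nonzero for every $\xi$ precisely because $p$ is odd, one is led to the explicit formula $V(y) = (-1)^y$, where we identify $\F_p$ with $\{0,1,\dots,p-1\}$. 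The identity $U+V = \delta_0$ is then a one-line check using $(-1)^{p-1}=1$.

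Given $V$, I would set $E := D + V$ (convolution of distributions extended to real-valued functions), so that $U + E = D + (U+V) = D$ by associativity. The total mass is $\sum_x E(x) = \bigl(\sum_x D(x)\bigr)\bigl(\sum_y V(y)\bigr) = 1$, using that $\sum_{y=0}^{p-1}(-1)^y = 1$ when $p$ is odd. For the only remaining point — non-negativity of $E$ — I would split around the uniform distribution:
\[
E(x) \;=\; \sum_y V(x-y)\,D(y) \;=\; \tfrac{1}{p}\sum_y V(x-y) \;+\; \sum_y V(x-y)\bigl(D(y)-\tfrac{1}{p}\bigr).
\]
The first summand equals $1/p$ (by the same computation $\sum_y V(y)=1$), while the second is bounded in modulus by $\|V\|_\infty \cdot \sum_y |D(y)-p^{-1}| \leq 1 \cdot p \cdot p^{-2} = 1/p$, using the hypothesis. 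Hence $E(x) \geq 0$ for every $x$, so $E$ is a probability distribution with $D = U+E$, as required.

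The only real content is identifying $V$; everything else is mechanical. Two remarks seem worth keeping in mind. First, oddness of $p$ enters essentially twice — in the existence of $V$ (at $p=2$ we would have $\hat U(1)=0$) and in the identity $\sum_y V(y)=1$ which gives $E$ total mass $1$ — consistent with the paper's comment that this lemma fails at $p=2$. Second, the hypothesis $|D(x)-p^{-1}| \leq p^{-2}$ is used exactly once, in the non-negativity step, and the calculation shows that the threshold $p^{-2}$ is tight up to constants for this deconvolution argument: replacing it by any $c/p$ with $c$ not small enough would no longer force $E(x) \geq 0$.
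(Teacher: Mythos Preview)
Your proof is correct and follows essentially the same route as the paper. Both arguments hinge on the same convolution inverse $V(y)=(-1)^{[y]}$ of $U$ (the paper phrases it as ``the convolution of $U$ with $x\mapsto(-1)^{[x]}$ takes the value 1 at 0 and 0 elsewhere''), and both establish non-negativity of $E$ by centering around the uniform distribution and bounding the remainder by $p\cdot p^{-2}=p^{-1}$; the only cosmetic difference is that the paper deconvolves $f=D-p^{-1}$ first and then adds $p^{-1}$, whereas you deconvolve $D$ directly and split afterwards, which yields the same $E$.
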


\begin{proof} Given $x\in\F_p$, write $[x]$ for the residue of $x$ in $\{0,1,\dots,p-1\}$. Then the convolution of $U$ with the function $x\mapsto(-1)^{[x]}$ takes the value 1 at 0 and 0 everywhere else. By translating this example, we can show that every function that takes the value 1 in one place and 0 everywhere else is the convolution of $U$ with a function that takes values in $\{-1,1\}$.

It follows that for every function $f:\F_p\to\R$ there is a function $g$ with $\|g\|_\infty\leq\sum_x|f(x)|$ such that $f=g*U$. Apply this to the function $f(x)=D(x)-p^{-1}$. Using the main hypothesis (which is in fact stronger than we need), we obtain a function $g$ with $\|g\|_\infty\leq p^{-1}$ such that $g*U=D-p^{-1}$, and hence $(g+p^{-1})*U=D$. Since $\|g\|_\infty\leq p^{-1}$, the function $g+p^{-1}$ is a probability distribution.
\end{proof}

It will not be of use to us, but the lemma requires only that $p$ should be odd. For even $p$ it fails, since for every distribution $E$ the sum of $U+E$ over the even residues is equal to the sum over the odd residues.

In the proof of the next lemma the constant $C(p,c)$ is the constant introduced in Proposition \ref{Equidistribution of high support linear forms}, which has a similar proof. For $p$ a prime and for $c>0$ let $M(p,c)= 2 \log p (\log C(p,c))^{-1}$.

\begin{lemma}\label{Mixing of distributions in F_p}

Let $p\geq 3$ be a prime and let $c>0$. Let $D$ be a probability distribution on $\F_p$ and suppose that $D(x) \le 1-c$ for every $x \in \F_p$. Then for every $M \ge 2p^2\log p/c\pi^2$, the distribution $MD$ satisfies $|MD(x) - p^{-1}| \le p^{-2}$ for each $x \in \F_p$. \end{lemma}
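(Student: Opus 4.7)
The plan is to run a short Fourier argument on $\F_p$, using Lemma \ref{average of roots of unity} as the essential input. Convolution becomes pointwise multiplication under the discrete Fourier transform, so writing $\widehat{D}(t) = \E_{x\sim D}\omega_p^{tx}$ for each $t\in\F_p$, we have $\widehat{MD}(t) = \widehat{D}(t)^M$, and in particular $\widehat{MD}(0)=1$.

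Next, I would bound $|\widehat{D}(t)|$ for each non-zero $t$. The hypothesis $D(x)\le 1-c$ for every $x\in\F_p$ is preserved under the bijection $x\mapsto tx$: the pushforward distribution $D_t$ defined by $D_t(y)=D(t^{-1}y)$ also satisfies $D_t(y)\le 1-c$ for every $y$. Since $\widehat{D}(t) = \E_{y\sim D_t}\omega_p^{y}$, Lemma \ref{average of roots of unity} (applied with the value $c' = \min\{c,1/2\}$, which still gives a bound $1-c'\pi^2/p^2$) yields $|\widehat{D}(t)|\le 1-c'\pi^2/p^2$ for every $t\in\F_p^*$. Raising this to the $M$th power and using $1-y\le e^{-y}$ gives
\[ |\widehat{MD}(t)| \le e^{-Mc'\pi^2/p^2}, \]
which is at most $p^{-2}$ as soon as $M\ge 2p^2\log p/c'\pi^2$. (The hypothesis $M\ge 2p^2\log p/c\pi^2$ together with possibly replacing $c$ by $\min\{c,1/2\}$ handles this; if $c\le 1/2$ then $c'=c$ and the stated bound on $M$ suffices directly.)

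Finally, I would apply Fourier inversion: for every $x\in\F_p$,
\[ MD(x) - p^{-1} = p^{-1}\sum_{t\in\F_p^*} \widehat{MD}(t)\,\omega_p^{-tx}, \]
so by the triangle inequality
\[ |MD(x) - p^{-1}| \le p^{-1}(p-1)\cdot p^{-2} < p^{-2}, \]
which is the desired conclusion. There is no serious obstacle here: Lemma \ref{average of roots of unity} already provides the uniform spectral gap $1-c\pi^2/p^2$, and the rest is a routine bookkeeping calculation converting the rate $M\ge 2p^2\log p/c\pi^2$ into the pointwise closeness $|MD(x)-p^{-1}|\le p^{-2}$.
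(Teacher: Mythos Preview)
Your proof is correct and follows essentially the same Fourier argument as the paper: both compute $\widehat{MD}(t)=\widehat{D}(t)^M$, invoke Lemma~\ref{average of roots of unity} for the spectral gap $|\widehat{D}(t)|\le 1-c\pi^2/p^2$ on nonzero frequencies, and then use Fourier inversion plus the triangle inequality. You are slightly more careful in tracking the factor $p^{-1}(p-1)$ from the sum over nonzero $t$ (the paper simply bounds this by $1$), and you correctly flag that Lemma~\ref{average of roots of unity} formally requires $c\le 1/2$; note that your proposed fix via $c'=\min\{c,1/2\}$ does not quite close that gap when $c>1/2$, since then $2p^2\log p/c'\pi^2 > 2p^2\log p/c\pi^2$, but this is a cosmetic issue that the paper's own proof shares and which never arises in the applications.
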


\begin{proof}
By the convolution identity and the Fourier inversion formula, 
\[MD(x) = \mathbb E_r(\hat D(r))^m\omega^{-rx},\]
where for each $r\in\F_p^n$, we define $\hat D(r)$ to be $\sum_y D(y)\omega^{ry}$.

If $r=0$, then $\hat D(r)=1$, while otherwise, by Lemma \ref{average of roots of unity} it has absolute value at most $1-c\pi^2/p^2$. It follows that $|MD(x)-p^{-1}|\leq(1-c\pi^2/p^2)^M$, which is at most $e^{-c\pi^2M/p^2}$. The result follows. \end{proof}

\begin{proposition} \label{Dense sumsets for distributions on $F_p$} Let $p\geq 3$ be a prime, let $c>0$, let $M=2p^2\log p/c\pi^2$, and let $ D$ be a probability distribution on $ \F_p$ such that $|D(x)| \le 1-c$ for each $x \in \F_p$. Then if $A$ is a subset of $\F_p^n$ with density $\epsilon$ inside $\F_p^n$ with respect to $D^n$, the sumset $(p-1)MA$ has density at least $\epsilon^{(p-1)M}$ inside $ \F_p^n$ with respect to the uniform distribution on $\F_p^n$. \end{proposition}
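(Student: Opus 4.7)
The plan is to combine the three preparatory results of this subsection in a short three-step argument.

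The first step is to set up a decomposition of $MD$: by Lemma \ref{Mixing of distributions in F_p}, the choice $M = 2p^2\log p/c\pi^2$ ensures $|MD(x) - p^{-1}| \le p^{-2}$ for every $x \in \F_p$, and Lemma \ref{Writing a distribution as a convolution with 0,1 for F_p} then allows us to write $MD = U + E$ with $U$ uniform on $\{0,1\}$ and $E$ some probability distribution on $\F_p$. At the level of product distributions on $\F_p^n$ this gives $(MD)^n = U^n + E^n$.

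The second step is to pass from density with respect to $D^n$ to density on a Boolean slice. I would observe that the sumset $MA = A + \cdots + A$ ($M$ summands) has density at least $\epsilon^M$ with respect to $(MD)^n$, since drawing $y_1,\dots,y_M$ independently from $D^n$ produces a sum with distribution $(MD)^n$, and with probability at least $\epsilon^M$ all the $y_i$ lie in $A$, in which case their sum lies in $MA$. Averaging this density over the $E^n$-coordinate of the decomposition $(MD)^n = U^n + E^n$ then yields some $e \in \F_p^n$ such that $B := (MA - e) \cap \{0,1\}^n$ has density $\alpha := |B|/2^n \ge \epsilon^M$ inside $\{0,1\}^n$. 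Translating back, $B + e \subseteq MA$, so $(p-1)MA \supseteq (p-1)B + (p-1)e$ and hence $|(p-1)MA| \ge |(p-1)B|$.

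The third step is to iterate Proposition \ref{Density of sums of two sets} on $B$: each application adds one more copy of $B \subseteq \{0,1\}^n$ and extends the ambient box by one coordinate value, so after $p-2$ applications we conclude that $(p-1)B$ has density at least $\alpha^{p-1}$ inside $\{0,1,\dots,p-1\}^n$. Identifying this product set with $\F_p^n$ in the obvious way preserves cardinality, and so $|(p-1)MA|/p^n \ge |(p-1)B|/p^n \ge \alpha^{p-1} \ge \epsilon^{(p-1)M}$, which is the desired bound.

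I do not expect any serious obstacle. The two places where a little care is needed are (i) the averaging that extracts the shift $e$ making $MA - e$ dense on the Boolean slice $\{0,1\}^n$, and (ii) keeping track of the fact that the iterated sumsets $kB$ first live in $\{0,\dots,k\}^n \subseteq \Z^n$ (where Proposition \ref{Density of sums of two sets} applies) before being identified with subsets of $\F_p^n$ for the final density computation.
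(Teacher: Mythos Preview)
Your proposal is correct and follows essentially the same route as the paper: apply Lemma \ref{Mixing of distributions in F_p} and Lemma \ref{Writing a distribution as a convolution with 0,1 for F_p} to write $MD=U+E$, use the decomposition to find a shift making $MA$ dense on a Boolean slice, and then iterate Proposition \ref{Density of sums of two sets} $p-2$ times. The paper's proof is the same argument with slightly different notation (it writes $z$ and $Y$ where you write $e$ and $B$), and your explicit remark about tracking $(p-1)B$ first in $\{0,\dots,p-1\}^n\subset\Z^n$ before identifying with $\F_p^n$ is a point the paper leaves implicit.
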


\begin{proof}
Lemma \ref{Mixing of distributions in F_p} implies that $|(MD)(\{x\}) - p^{-1}| \le p^{-2}$ for every $x \in \F_p$. Applying Lemma \ref{Writing a distribution as a convolution with 0,1 for F_p} to $MD$ we obtain a probability distribution $E$ on $\F_p$ such that $MD = U + E$. Because $A$ has density at least $\epsilon$ with respect to $ D^n$, the set $MA$ has density at least $\e^M$ in $\F_p^n$ with respect to the distribution $(MD)^n$, since if $x_1,\dots,x_M$ are chosen independently according to the distribution $D$, the probability that $x_1+\dots+x_M\in MA$ is at least the probability that each $x_i$ belongs to $A$. 

Suppose now that we choose $y$ and $z$ independently at random from $\F_p^n$, according to the distributions $U^n$ and $E^n$, respectively. Then $y+z$ is distributed according to $(U+E)^n=(MD)^n$, so the probability that $y+z\in MA$ is at least $\e^M$. It follows that there exists $z\in\F_p^n$ such that the density with respect to $U^n$ of the set $\{y\in\F_p^n:y+z\in MA\}=MA-z$ is at least $\e^M$. In other words, letting $Y=(MA-z)\cap\{0,1\}^n$, we have that $Y$ has density at least $\e^M$ inside $\{0,1\}^n$. Applying Proposition \ref{Density of sums of two sets} $p-2$ times we obtain that $(p-1)Y$ has density at least $\epsilon^{(p-1)M}$ inside $\F_p^n$ with respect to the uniform distribution on $\F_p^n$. Since $(p-1)(Y+z)$ is contained in $(p-1)MA$, it follows that $ (p-1)MA$ also has density at least $\epsilon^{(p-1)M}$. \end{proof}

\subsection{Equidistribution of multilinear forms}

We are now ready to prove a result about the equidistribution of multilinear forms which in particular implies Proposition \ref{bounded analytic rank implies bounded partition rank for restricted alphabets}, which bounds the partition rank of a tensor in terms of its bias with respect to a restricted alphabet. For this result we shall not need the hypothesis that $|S|=2$, so it applies for general alphabets. Throughout the remainder of the paper we will restrict attention to $d$-linear forms from $(\F^n)^d$ to $\F$, because only these will be relevant to the proof of Theorem \ref{Main theorem}. However, all the results and proofs that we provide for multilinear forms $(\F^n)^d \rightarrow \F$ can be generalized easily to multilinear forms from $\F^{n_1} \times \dots \times \F^{n_d}$ to $\F$. 
\medskip

As in the previous subsection, if we are given a $d$-linear form $m: (\F_p^n)^d \rightarrow \F_p$ and an element $u \in \F_p^n$, we write $m_u$ for the $(d-1)$-linear form from $(\F_p^{n})^{d-1}$ to $\F_p$ defined by 
\[ m_u (x_2,\dots,x_d) = m(u,x_2,\dots,x_d). \] 
Our proof will appeal to Theorem \ref{bounded analytic rank implies bounded partition rank}, the result we quoted earlier that bounds partition rank in terms of analytic rank when the alphabet is unrestricted. We continue to write $A_{d,\F}$ for the best function such that $\pr T\leq A_{d,\F}(\ar T)$ for every degree-$d$ tensor $T$ over the field $\F$. For a fixed prime $p$ and a fixed $c>0$, we define a family of functions $B_{d,p,c}: (0, \infty) \rightarrow (0, \infty)$ for all $d \ge 2$ by $B_{2,p,c} = K_{p,c,c}$ and for all $d \ge 3$, 
\[B_{d,p,c}(\epsilon) = A_{d,\F_p}((p-1)M(p,c)B_{d-1,p,c}(\epsilon/2) + (p-1)M(p,c) \log_p((\epsilon/2)^{-1})),\]
where $M(p,c)=2p^2\log p/c\pi^2$ is the constant $M$ from Proposition \ref{Dense sumsets for distributions on $F_p$}. Note that $B_{2,p,c}(\e)$ is the bound arising from Corollary \ref{flipped bilinear case} in the case $c_1=c_2=c$: that is, if two distributions $D_1,D_2$ on $\F_p$ both take maximum values at most $1-c$ and $b:\F_p^n\times\F_p^n\to\F_p$ is a bilinear form with bias at least $\e$, then $\rk b\leq B_{2,p,c}(\e)$. 

\begin{proposition}\label{Equidistribution of multilinear forms for distributions} Let $p\geq 3$ be a prime, let $d \ge 2$ be a positive integer, let $0<c\leq 1/2$ and let $D_1, \dots, D_d$ be distributions on $\F_p$ such that for each $1\leq i\leq d$ and each $x \in \F_p$ we have $D_{i}(x) \le 1-c$. Let $\epsilon > 0$. Let $m: (\F_p^n)^d \rightarrow \F_p$ be a $d$-linear form such that there exists $t\ne 0$ for which $|\bias_{t,(D_1, \dots, D_d)} m| \ge \epsilon$. Then the partition rank of $m$ is at most $B_{d,p,c}(\epsilon)$. \end{proposition}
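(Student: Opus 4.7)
I would prove the proposition by induction on $d$. The base case $d = 2$ is an immediate consequence of Corollary \ref{flipped bilinear case}: if $b$ is a bilinear form with $|\bias_{t,(D_1,D_2)} b| \geq \epsilon$, then $\rk b \leq K_{p,c,c}(\epsilon) = B_{2,p,c}(\epsilon)$, and for bilinear forms the usual rank and the partition rank coincide.

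For the inductive step, let $m$ be the given $d$-linear form with $d \geq 3$. I would begin from the decomposition
\[\bias_{t,(D_1,\dots,D_d)} m = \E_{u \sim D_1^n} \bias_{t,(D_2,\dots,D_d)} m_u,\]
and use a standard averaging argument (together with the fact that each inner bias has modulus at most $1$) to extract a set $A \subset \F_p^n$ of density at least $\epsilon/2$ with respect to $D_1^n$ on which $|\bias_{t,(D_2,\dots,D_d)} m_u| \geq \epsilon/2$. The induction hypothesis applied to each such $m_u$ yields $\pr m_u \leq B_{d-1,p,c}(\epsilon/2)$ for every $u \in A$. The crucial step is now to transfer from density with respect to $D_1^n$ to density with respect to the uniform distribution on $\F_p^n$: setting $N = (p-1)M(p,c)$, Proposition \ref{Dense sumsets for distributions on $F_p$} guarantees that the $N$-fold sumset $A' := NA$ has density at least $(\epsilon/2)^N$ in $\F_p^n$ under the uniform distribution. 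Since $m$ is linear in its first argument, for $u = u_1 + \dots + u_N$ with each $u_i \in A$ we have $m_u = m_{u_1} + \dots + m_{u_N}$, so subadditivity of partition rank gives $\pr m_u \leq K := N \cdot B_{d-1,p,c}(\epsilon/2)$ for every $u \in A'$.

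To convert this into a partition rank bound on $m$ itself, I would invoke the standard inequality $\ar T \leq \pr T$ for tensors, which yields $\bias m_u \geq p^{-K}$ (for the ordinary bias, with respect to the uniform distribution on $\F_p^n$ and any non-trivial character) for each $u \in A'$. Since every such bias is a non-negative real and $\bias m = \E_{u \in \F_p^n} \bias m_u$, averaging over the indicator of $A'$ yields
\[\bias m \geq \frac{|A'|}{p^n}\, p^{-K} \geq (\epsilon/2)^N\, p^{-K},\]
so $\ar m \leq K + N \log_p(2/\epsilon)$. Applying the black-box Theorem \ref{bounded analytic rank implies bounded partition rank} then gives $\pr m \leq A_{d,\F_p}(K + N \log_p(2/\epsilon)) = B_{d,p,c}(\epsilon)$, as required. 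The main conceptual obstacle is precisely this density transfer: the inductive hypothesis produces $u$ for which $\pr m_u$ is controlled only under the tilted distribution $D_1^n$, whereas both the identity $\bias m = \E_u \bias m_u$ and the black-box theorem work with the uniform distribution; the sumset machinery of Proposition \ref{Dense sumsets for distributions on $F_p$}, combined with the multilinearity-based subadditivity that converts the sumset back into a bound on partition rank, is exactly what bridges the two, at the cost of the factor $N = (p-1)M(p,c)$ appearing in the recursion defining $B_{d,p,c}$.
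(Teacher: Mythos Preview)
Your proof is correct and follows essentially the same approach as the paper: induction on $d$ with the base case from Corollary~\ref{flipped bilinear case}, an averaging argument giving a $D_1^n$-dense set of $u$ with $\pr m_u$ bounded via the inductive hypothesis, the density transfer to the uniform distribution via Proposition~\ref{Dense sumsets for distributions on $F_p$} combined with subadditivity of partition rank under the sumset, and the final step through $\ar\le\pr$, the identity $\bias m=\E_u\bias m_u$, and Theorem~\ref{bounded analytic rank implies bounded partition rank}. The only cosmetic difference is that the paper phrases the averaging step via the contrapositive of the inductive hypothesis rather than first extracting the large-bias set, but the two formulations are equivalent.
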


\begin{proof} We proceed by induction on $ d$. The result holds for $ d=2$ by our choice of $B_{2,p,c}$ (as discussed just above). Now let $ d \ge 3$ and assume that the result holds for $d-1$. For each $ u \in \F_p^n$ such that $ \pr m_u \ge B_{d-1,p,c}(\epsilon/2)$, the inductive hypothesis guarantees that
\[ |\bias_{t,(D_2, \dots, D_d)} (m_u)| \le \epsilon/2\] 
for every $t\ne 0$. It follows that the set of such $u \in \F_p^n$ has density at most $1-\epsilon/2$ with respect to $D_1^n$, since otherwise we would have 
\[ |\bias_{t,(D_1, \dots, D_d)} m| < \epsilon/2+\e/2=\e, \] 
contradicting our assumption. By the subadditivity of the partition rank and Proposition \ref{Dense sumsets for distributions on $F_p$} the set of $u \in \F_p^n$ such that $ \pr m_u \le (p-1)M(p,c)B_{d-1,p,c}(\epsilon/2)$ has density at least $(\epsilon/2)^{(p-1)M(p,c)}$ with respect to the uniform distribution on $\F_p^n$.

A theorem of Lovett \cite[Theorem 1.7(i)]{Lovett}, proved independently by Kazhdan and Ziegler \cite[Lemma 2.2]{Kazhdan and Ziegler}, states that the analytic rank of a tensor is bounded above by its partition rank. Therefore,
\[\ar m_u \le (p-1)M(p,c)B_{d-1,p,c}(\epsilon/2)\] 
for all such $u \in \F_p^n$. Since
\[ \bias m = \E_{u \in \F_p^n} \bias (m_u) = \E_{u \in \F_p^n} p^{-\ar (m_u)}, \] 
we obtain the inequality
\[ \bias m \ge (\epsilon/2)^{(p-1)M(p,c)} p^{-(p-1)M(p,c)B_{d-1,p,c}(\epsilon/2)},\] 
and therefore 
\[\ar m \le (p-1)M(p,c)B_{d-1,p,c}(\epsilon/2) + (p-1)M(p,c) \log_p((\epsilon/2)^{-1}).\] 
It follows that $\pr m \le B_{d,p,c}(\epsilon)$, as desired. \end{proof}

Proposition \ref{bounded analytic rank implies bounded partition rank for restricted alphabets} follows as a special case of Proposition \ref{Equidistribution of multilinear forms for distributions} by taking $A_{d,p,(S_1, \dots, S_d)}: \lbrack 0, \infty) \rightarrow \lbrack 0, \infty)$ to be defined by \[A_{d,p,(S_1, \dots, S_d)}(r) = B_{d,p,1/2}(p^{-r})\] for all $r \ge 0$.

\subsection{Equidistribution of polynomials}

We now turn to polynomials. Our strategy is broadly the same as it was for the quadratic case proved in the last section: we obtain a $d$-linear form $m$ from $P$ and disjoint sets $X_1,\dots,X_d$ such that when we restrict $m$ to $\F_p^{X_1}\times\dots\times\F_p^{X_d}$, the rank of the restriction tends to infinity with the rank of $m$ itself. We then apply the result for multilinear forms to $m$ and deduce from it the corresponding result for $P$. However, at one point we shall make critical use of the assumption that $|S|=2$.

Let $d \ge 2$ be a positive integer and let $\e$ be an element of $\{-1,1\}^d$. We shall write $N(\e)$ for the number of indices $1\leq i\leq d$ such that $\e_i = -1$. For each $k \ge 1$, we say that a monomial $a \prod_{u=1}^n x_u^{s_u}$ with $a \in \F_p^*$ involves at least (resp. at most) $k$ pairwise distinct variables if the set $\{u \in \lbrack n \rbrack: s_u \ge 1\}$ has size at least (resp. at most) $k$. The next proposition is a generalization of Lemma \ref{Decoupling for one coordinate} to $d$ variables, except that we state it only for functions of the form $\omega_p^{f(U_1,\dots,U_d)}$, as these are the functions that concern us. Since the result is standard, we give only a sketch of the proof.

\begin{proposition} \label{Decoupling for an arbitrary function} Let $p$ be a prime, let $d \ge 2$ be a positive integer, let $W_1, \dots, W_d$ be finite sets, let $ U_1,\dots,U_d$ be jointly independent random variables taking values in $W_1, \dots, W_d$ respectively and let $ f: W_1 \times \dots \times W_d \rightarrow \F_p$ be a function. Then 
\begin{equation} |\E \omega_p^{f(U_1,\dots,U_d)})|^{2^d} \le \E \omega_p^{\sum_{\e \in \{-1,1\}^d} (-1)^{N(\e)} f(U_{1,\e_1},\dots,U_{d,\e_d})}\label{bound using decoupling} \end{equation}
where $ U_{i, -1}$ and $ U_{i,1}$ have the same distribution as $ U_i$ for each $ i \in \lbrack d \rbrack$, and the $ 2d$ variables $ U_{i,-1}, U_{i,1}$ with $ i \in \lbrack d \rbrack$ are jointly independent. \end{proposition}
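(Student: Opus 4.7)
The plan is to prove the inequality by iterated Cauchy-Schwarz, once per coordinate. The cleanest formulation is the following slightly more general statement: for any bounded function $F\colon W_1\times\dots\times W_d\to\C$,
\begin{equation*}
|\E F(U_1,\dots,U_d)|^{2^d} \le \E \prod_{\e\in\{-1,1\}^d} \mathcal{C}^{N(\e)} F(U_{1,\e_1},\dots,U_{d,\e_d}),
\end{equation*}
where $\mathcal{C}$ denotes complex conjugation (so $\mathcal{C}^{N(\e)}F$ is $\overline F$ when $N(\e)$ is odd and $F$ otherwise). Specializing to $F=\omega_p^{f}$ then yields \eqref{bound using decoupling}, because $\overline{\omega_p^a}=\omega_p^{-a}$ converts the product of conjugates into $\omega_p^{\sum_\e (-1)^{N(\e)}f(\dots)}$.

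I would prove this by induction on $d$. The base case $d=1$ is a single Cauchy-Schwarz: $|\E F(U_1)|^2\le \E|F(U_1)|^2 = \E F(U_{1,1})\overline{F(U_{1,-1})}$, using independence of $U_{1,1}$ and $U_{1,-1}$. For the inductive step, I would peel off the last coordinate, writing
\begin{equation*}
|\E F(U_1,\dots,U_d)|^2 = \bigl|\E_{U_1,\dots,U_{d-1}}\E_{U_d} F\bigr|^2 \le \E_{U_1,\dots,U_{d-1}} \bigl|\E_{U_d}F\bigr|^2 = \E\, G(U_1,\dots,U_{d-1}),
\end{equation*}
where $G(U_1,\dots,U_{d-1}) = \E_{U_{d,-1},U_{d,1}} F(U_1,\dots,U_{d-1},U_{d,1})\overline{F(U_1,\dots,U_{d-1},U_{d,-1})}$ and $U_{d,-1},U_{d,1}$ are independent copies of $U_d$. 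Applying the inductive hypothesis to $G$, viewed as a bounded function of the $d-1$ variables $U_1,\dots,U_{d-1}$, and then raising both sides of the resulting inequality to the power $2^{d-1}$, combines the $2^{d-1}$ copies of $(U_{i,-1},U_{i,1})$ for $i<d$ produced by the induction with the pair $(U_{d,-1},U_{d,1})$ already inside $G$, giving the full $2^d$-fold expression indexed by $\e\in\{-1,1\}^d$. Moreover the conjugations introduced at each Cauchy-Schwarz step distribute exactly so that the total number of bars on the factor indexed by $\e$ is congruent to $N(\e)$ modulo $2$.

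There is no genuine obstacle here — this is the standard Gowers-Cauchy-Schwarz argument for the box norm $\|\cdot\|_{\square^d}$, precisely generalizing Lemma \ref{Decoupling for one coordinate}. The only step that requires any attention is the bookkeeping of conjugations at each inductive application, but this is handled automatically by indexing the duplicated copies of $U_i$ by $\e_i\in\{-1,1\}$ and observing that a bar is introduced on exactly those copies with $\e_i=-1$.
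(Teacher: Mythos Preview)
Your overall strategy --- iterated Cauchy--Schwarz, one application per coordinate --- is exactly what the paper intends (the paper only sketches this by pointing back to Lemma~\ref{Decoupling for one coordinate}). However, your inductive implementation has a genuine gap.

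You define $G(U_1,\dots,U_{d-1}) = \E_{U_{d,\pm 1}} F(\cdot,U_{d,1})\overline{F(\cdot,U_{d,-1})}$ with the expectation over $U_{d,\pm 1}$ taken \emph{inside}, and then apply the $(d-1)$-variable hypothesis to $G$. But $G$ is now a deterministic function of $U_1,\dots,U_{d-1}$ alone; in the resulting product $\prod_{\e'} \mathcal{C}^{N(\e')} G(U_{\e'})$ each factor carries its own independent integration over $U_{d,\pm 1}$. You therefore obtain $2^{d-1}$ independent copies of the pair $(U_{d,1},U_{d,-1})$, not the single shared pair that the right-hand side of \eqref{bound using decoupling} demands. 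A quick sanity check at $d=2$: your bound collapses to $(\E_{U_1} G)^2$ by independence of $U_{1,1},U_{1,-1}$, whereas the target is $\|F\|_\square^4$; for $F(u,v)=(-1)^{uv}$ on $\{0,1\}^2$ with uniform variables these equal $1/4$ and $1/2$ respectively. (Separately, your base-case chain $|\E F|^2\le \E|F|^2 = \E F(U_{1,1})\overline{F(U_{1,-1})}$ is wrong at the last equality: by independence the right side equals $|\E F|^2$, so the $d=1$ case is a trivial equality, not what you wrote.)

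The repair is not to absorb $U_{d,\pm 1}$ into a function before inducting. Instead iterate directly: at step $j$, apply Cauchy--Schwarz in the single variable $U_j$, keeping \emph{all} other variables --- including the already-duplicated pairs $U_{i,\pm 1}$ for $i<j$ --- outside the modulus. This is precisely how Lemma~\ref{Decoupling for one coordinate} handles its second application (``taking $U_2$ and $U_2'$ outside the modulus sign''). After $d$ steps one has a single set of $2d$ jointly independent variables $U_{i,\pm 1}$, with conjugations accumulating to $\mathcal{C}^{N(\e)}$ exactly as you describe.
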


\begin{proof}[Proof sketch]
The proof is basically the same as that of Lemma \ref{Decoupling for one coordinate} except that the Cauchy-Schwarz inequality is now applied to each of the $d$ variables instead of just to two variables. \end{proof}

We now establish the connection that we shall use to deduce the approximate equidistribution of a polynomial from that of a suitable associated multilinear form. Given a polynomial $P: \F_p^n \rightarrow \F_p$ and a partition $\{X_1,\dots,X_d\}$ of $\lbrack n \rbrack$, we write $ P_{\{X_1,\dots,X_d\}}$ for the polynomial obtained from $P$ by keeping only the monomials $a \prod_{u=1}^n x_u^{s_u}$ with $a\ne 0$ such that for each $1\leq i\leq d$ there exists $ u \in X_{i}$ with $s_u \ge 1$. 

\begin{proposition} \label{Decoupling inequality for polynomials} Let $p\geq 3$ be a prime, let $d \ge 2$ be a positive integer, let $D$ be a distribution on $\F_p$, let $P: \F_p^n \rightarrow \F_p$ be a polynomial, and let $ \{X_1,\dots,X_d\}$ be a partition of $ \lbrack n \rbrack$. If $P$ has degree at most $d$, then \begin{equation} |\E_{x \sim D} \omega_p^{P(x)}|^{2^d} \le \E_{x \sim D - D} \omega_p^{P_{\{X_1,\dots,X_d\}}(x)}. \label{decoupling inequality with $D-D$} \end{equation}  \end{proposition}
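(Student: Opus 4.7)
The plan is to apply the $d$-variable decoupling inequality of Proposition \ref{Decoupling for an arbitrary function} and then to simplify the resulting box-difference expression monomial by monomial, using the degree hypothesis $\deg P\le d$ to match it with $P_{\{X_1,\dots,X_d\}}$ evaluated at an appropriate difference of $D$-distributed vectors.

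First I would split each $x\in\F_p^n$ along the partition by writing $x=(U_1,\dots,U_d)$ with $U_i=x(X_i)\in\F_p^{X_i}$, so that if $x\sim D^n$ then $U_1,\dots,U_d$ are jointly independent with $U_i\sim D^{X_i}$. Applying Proposition \ref{Decoupling for an arbitrary function} to $P$, viewed as a function on $\F_p^{X_1}\times\cdots\times\F_p^{X_d}$, immediately gives
\[|\E_{x\sim D^n}\omega_p^{P(x)}|^{2^d}\le\E\,\omega_p^{\sum_{\e\in\{-1,1\}^d}(-1)^{N(\e)}P(U_{1,\e_1},\dots,U_{d,\e_d})},\]
where the right-hand expectation is taken over the $2d$ jointly independent copies $U_{i,\pm 1}\sim D^{X_i}$.

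Next I would expand $P$ into monomials and compute the box-difference $\sum_\e(-1)^{N(\e)}$ on each monomial separately. For a monomial $a\prod_u x_u^{s_u}$ the sum factors over the groups as
\[a\prod_{i=1}^d\Bigl(\prod_{u\in X_i}(U_{i,1})_u^{s_u}-\prod_{u\in X_i}(U_{i,-1})_u^{s_u}\Bigr),\]
and the $i$-th factor vanishes whenever the monomial involves no variable from $X_i$ (both inner products then equal $1$). Hence only monomials of $P_{\{X_1,\dots,X_d\}}$ contribute. The assumption $\deg P\le d$ now plays the decisive role: a surviving monomial has total degree at least $d$ because it contains at least one variable from each of the $d$ groups, and at most $d$ by hypothesis, so it must be of the form $a\,x_{u_1}\cdots x_{u_d}$ with $u_i\in X_i$ and each exponent equal to $1$. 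For such a multilinear cross-monomial the $i$-th factor collapses to $(U_{i,1})_{u_i}-(U_{i,-1})_{u_i}=(V_i)_{u_i}$, where $V_i:=U_{i,1}-U_{i,-1}$.

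Summing over all surviving monomials, the box-difference therefore equals $P_{\{X_1,\dots,X_d\}}(V_1,\dots,V_d)$. Since each coordinate $(V_i)_u$ is an independent draw from $D-D$, the concatenation $V=(V_1,\dots,V_d)$ is distributed as $(D-D)^n$, and substituting into the decoupled inequality gives exactly the stated bound. The only genuinely delicate step is the degree bookkeeping forcing surviving monomials to be multilinear across the partition; without $\deg P\le d$, higher powers supported in a single $X_i$ would introduce extra terms on the right-hand side not captured by $P_{\{X_1,\dots,X_d\}}$ alone, and the clean identification with $P_{\{X_1,\dots,X_d\}}(V)$ would fail.
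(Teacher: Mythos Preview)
Your proposal is correct and follows essentially the same approach as the paper's proof: both apply Proposition~\ref{Decoupling for an arbitrary function}, compute the box-difference monomial by monomial using the factorization over the groups $X_1,\dots,X_d$, observe that monomials missing some $X_i$ vanish while the degree hypothesis forces surviving monomials to be multilinear across the partition, and then identify the result with $P_{\{X_1,\dots,X_d\}}(x-x')$ for $x,x'\sim D^n$ independent. The only cosmetic difference is that you write out the factorization $\prod_i\bigl(\prod_{u\in X_i}(U_{i,1})_u^{s_u}-\prod_{u\in X_i}(U_{i,-1})_u^{s_u}\bigr)$ explicitly, whereas the paper argues the same vanishing and identification slightly more informally.
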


\begin{proof} We apply Proposition \ref{Decoupling for an arbitrary function}. Consider first the special case where $P$ is a monomial $P(x)=\prod_{u=1}^nx_u^{s_u}$. For each $1\leq i\leq d$ let $U_i$ be the random variable $\prod_{u\in X_i}x_u^{s_u}$, where $x\sim D^n$, so that $P(x)=U_1\dots U_d$.

We now look at the behaviour of the quantity
\[\sum_{\e \in \{-1,1\}^d} (-1)^{N(\e)} f(U_{1,\e_1},\dots,U_{d,\e_d}).\]
If there exists $i$ such that $s_u=0$ for every $u\in X_i$, then $f(U_{1,\e_1},\dots,U_{d,\e_d})$ is independent of $\e_i$, from which it follows that the whole sum is zero. The only other possibility, since $P$ has degree at most $d$, is if for each $i$ there is exactly one $u_i$ such that $s_{u_i}=1$, and all other $s_u$ are zero. In that case for each $i$ let $U_{i,1}=x_{u_i}$ and $U_{i,-1}=x_{u_i}'$, where $x'$ is an independent copy of $x$. Then the quantity is equal to $\prod_{i=1}^d(U_{i,1}-U_{i,-1})=\prod_{i=1}^d(x_{u_i}-x_{u_i}')$.

By linearity it follows more generally that 
\[\sum_{\e \in \{-1,1\}^d} (-1)^{N(\e)} f(U_{1,\e_1},\dots,U_{d,\e_d})=P_{\{X_1,\dots,X_d\}}(x-x').\]
The result therefore follows from Proposition \ref{Decoupling for an arbitrary function}.
\end{proof}

Let $S$ be a subset of $\F_p$ of size $2$, which will remain fixed until the end of the section. Before we start the main proof we need a few more results about polynomials and their connections to multilinear forms. For the next three lemmas let $P:\F_p^n\to\F_p$ be a polynomial of degree $d$, and let $P=Q+R$ be the unique decomposition such that $Q$ is a linear combination of monomials of degree at most 1 in each variable separately and of total degree $d$, and $R$ is a linear combination of monomials such that either at least one variable has degree greater than 1 or the monomial has total degree less than $d$.

We shall make use of the following decomposition: for $p$ a prime, for $d \ge 2$ a positive integer, and for $P: \F_p^n \rightarrow \F_p$ a polynomial of degree $d$ over $\F_p$, we can write $P = Q+R$, where $Q$ is a linear combination of monomials $x_{i_1} \dots x_{i_d}$ with $i_1, \dots, i_d$ distinct elements of $\lbrack n \rbrack$, and $R$ is a linear combination of monomials $x_{i_1} \dots x_{i_{d'}}$ such that either $d'<d$, or else $d'=d$ and $i_1, \dots, i_d$ are not distinct.

The next lemma is where we shall use the assumption that $|S| = 2$: it is false in general for $|S| \ge 3$.
 
\begin{lemma}\label{Remainder with rank at most 1 for sets of size 2} The polynomial $R$ coincides on $S^n$ with a polynomial of degree at most $d-1$. \end{lemma}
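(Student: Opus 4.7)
The plan is to exploit the fact that $|S|=2$ by observing that on $S$ every variable satisfies a quadratic identity, which lets us reduce any power $x_i^s$ with $s \ge 2$ to a linear polynomial in $x_i$.

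Concretely, write $S = \{a,b\}$. Since $(x-a)(x-b) = 0$ for every $x \in S$, we have the identity
\[ x_i^2 = (a+b) x_i - ab \]
valid whenever $x_i \in S$. Iterating, for every $s \ge 1$ there is a polynomial $\lambda_s \in \F_p[y]$ of degree at most $1$ such that $x_i^s = \lambda_s(x_i)$ on $S$.

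Now consider a single monomial $c \prod_{j=1}^k x_{i_j}^{s_j}$ appearing in $R$, with $i_1,\dots,i_k$ distinct and $s_1,\dots,s_k \ge 1$. By the definition of $R$, either the total degree $s_1 + \dots + s_k$ is strictly smaller than $d$, or else it equals $d$ but the indices $i_1,\dots,i_d$ of the length-$d$ sequence $(x_{i_1},\dots,x_{i_d})$ are not all distinct, which means that some $s_j$ is at least $2$ and hence $k \le d-1$. Substituting $x_{i_j}^{s_j} = \lambda_{s_j}(x_{i_j})$ for each $j$, the monomial agrees on $S^n$ with a product of $k$ polynomials of degree at most $1$, each in a distinct variable, and hence with a polynomial of total degree at most $k$. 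In the first case $k \le s_1 + \dots + s_k - 1 \le d-1$ trivially (and in fact the monomial already has degree $< d$); in the second case $k \le d-1$ as noted.

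Summing over the monomials of $R$, we conclude that $R$ agrees on $S^n$ with a polynomial of degree at most $d-1$. There is no real obstacle in the argument; the only step that uses the hypothesis $|S|=2$ in an essential way is the existence of the linear reduction $x_i^2 = (a+b)x_i - ab$ on $S$, which would fail if $|S| \ge 3$ (one would then only be able to reduce powers modulo a cubic, leaving $x_i^2$ irreducible).
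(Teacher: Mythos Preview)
Your proof is correct and follows essentially the same approach as the paper: both exploit that on a two-element set $S$, any power $x_i^s$ with $s\ge 2$ agrees with a linear polynomial in $x_i$, and use this to bring each monomial of $R$ down to degree at most $d-1$. The paper reduces just one offending power $x_{i_1}^{s_1}$ (with $s_1\ge 2$) to $ax_{i_1}+b$, whereas you reduce all powers simultaneously to linear and then bound by the number $k$ of distinct variables; these are the same idea. One small slip: in the first case (total degree $<d$) the claimed inequality $k \le s_1+\dots+s_k-1$ fails when all $s_j=1$; the correct chain is $k \le s_1+\dots+s_k \le d-1$, which you in any case note parenthetically.
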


\begin{proof} Each monomial in $R$ can be written as $x_{i_1}^{s_1} \dots x_{i_{d'}}^{s_{d'}}$ for some nonnegative integer $0 \le d' \le d-1$, some $i_1, \dots, i_{d'} \in \lbrack n \rbrack$, and some positive integers $s_1, \dots, s_{d'} \ge 1$. If all powers $s_1, \dots, s_{d'}$ are equal to $1$, then this monomial has degree at most $d-1$. If on the other hand one of the powers $s_1, \dots, s_{d'}$ is at least $2$, then
without loss of generality this power is $s_1$. Since $S$ has size $2$, there exist $a,b \in \F_p$ such that $x^{s_1} = ax+b$ for each of the two $x \in S$, we obtain that the monomial $x_{i_1}^{s_1} \dots x_{i_{d'}}^{s_{d'}}$ coincides on $S^n$ with the polynomial 
\[(a x_{i_1}+b)x_{i_1}^{s_2} \dots x_{i_{d'}}^{s_{d'}},\] 
which has degree at most $d-1$. \end{proof}

The next lemma is a polarization identity for $d$-linear forms. 

\begin{lemma} \label{unique symmetric multilinear form} There exists a unique symmetric $d$-linear form $m: (\F_p^n)^d \rightarrow \F_p$ such that \begin{equation} Q(y) = m(y, \dots, y) \label{polynomial equals symmetric multilinear form} \end{equation} for every $y \in \F_p^n$. \end{lemma}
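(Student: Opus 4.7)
The plan is to prove existence by explicit symmetrization and uniqueness by a standard polarization argument, both of which make use of the assumption $p>d$ to ensure that $d!$ is invertible in $\F_p$.

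For existence, first observe that since $Q$ is a linear combination of squarefree monomials of total degree $d$, we can collect terms over unordered $d$-element subsets $I=\{i_1,\dots,i_d\}\subset[n]$ and write $Q(y)=\sum_{I} c_I \prod_{i\in I} y_i$ for uniquely determined coefficients $c_I\in\F_p$. Define
\[ m(y^{(1)},\dots,y^{(d)}) = \frac{1}{d!}\sum_{I=\{i_1,\dots,i_d\}} c_I \sum_{\sigma\in S_d} y^{(1)}_{i_{\sigma(1)}}\cdots y^{(d)}_{i_{\sigma(d)}}, \]
where the inner sum runs over all bijections $\sigma$ from $[d]$ onto the (ordered) tuple $(i_1,\dots,i_d)$; since we average over permutations, the value does not depend on which ordering of $I$ we chose. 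This formula is $d$-linear in $(y^{(1)},\dots,y^{(d)})$ by inspection, and it is invariant under permutations of the arguments because permuting the $y^{(j)}$ just reindexes the sum over $S_d$.

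Setting $y^{(1)}=\cdots=y^{(d)}=y$ collapses each term of the inner sum to $\prod_{i\in I}y_i$, of which there are exactly $d!$, so $m(y,\dots,y)=\sum_I c_I \prod_{i\in I} y_i = Q(y)$. This verifies \eqref{polynomial equals symmetric multilinear form}. The invertibility of $d!$ mod $p$ is guaranteed by the standing assumption $p>d$ (inherited from Theorem \ref{Main theorem}).

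For uniqueness, suppose $m_1,m_2$ are symmetric $d$-linear forms on $\F_p^n$ with $m_1(y,\dots,y)=m_2(y,\dots,y)=Q(y)$ for all $y$. Let $M=m_1-m_2$; then $M$ is a symmetric $d$-linear form vanishing on the diagonal. For arbitrary $y^{(1)},\dots,y^{(d)}\in\F_p^n$ and indeterminates $t_1,\dots,t_d$, multilinearity gives
\[ M\Big(\sum_{j=1}^d t_j y^{(j)},\,\ldots\,,\sum_{j=1}^d t_j y^{(j)}\Big) = \sum_{(j_1,\dots,j_d)\in[d]^d} t_{j_1}\cdots t_{j_d}\, M(y^{(j_1)},\dots,y^{(j_d)}). \]
The left-hand side vanishes identically because $M$ vanishes on the diagonal. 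Extracting the coefficient of the monomial $t_1 t_2\cdots t_d$ and using symmetry of $M$ to collect the $d!$ identical contributions coming from the permutations of $(1,\dots,d)$, we obtain $d!\,M(y^{(1)},\dots,y^{(d)})=0$. Since $p>d$, we can divide by $d!$ to conclude $M=0$, so $m_1=m_2$.

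There is no real obstacle here: the only thing to watch is that $d!\ne 0$ in $\F_p$, which is exactly the hypothesis $p>d$ used throughout the paper. Equivalently, one could package the uniqueness via the explicit polarization formula $m(y^{(1)},\dots,y^{(d)})=\frac{1}{d!}\sum_{I\subset[d]}(-1)^{d-|I|}Q\bigl(\sum_{i\in I}y^{(i)}\bigr)$, which recovers $m$ from $Q$ and hence forces any two candidates to coincide.
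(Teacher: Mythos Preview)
Your proof is correct and the existence argument is essentially identical to the paper's: both symmetrize the coefficients of $Q$ using the factor $(d!)^{-1}$. For uniqueness the paper instead argues by coefficient comparison (using that a nonzero polynomial of degree $<p$ cannot vanish identically, together with the relation $a_{i_1,\dots,i_d}=\sum_{\sigma}b_{i_{\sigma(1)},\dots,i_{\sigma(d)}}$), whereas you use the polarization identity; both are standard and equally short.

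One small point worth tightening: when you say the $t_j$ are ``indeterminates'' and the left-hand side ``vanishes identically'', you are implicitly passing from the vanishing of the function $(t_1,\dots,t_d)\mapsto M(\sum_j t_j y^{(j)},\dots,\sum_j t_j y^{(j)})$ on $\F_p^d$ to the vanishing of the corresponding formal polynomial. This step also requires $p>d$ (the polynomial has degree at most $d<p$ in each variable, so vanishing on $\F_p^d$ forces it to be zero). It would be cleaner either to state this explicitly, or to drop the word ``indeterminates'' and note that the hypothesis $p>d$ is used twice: once to recover the polynomial from its values, and once to divide by $d!$.
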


\begin{proof} Let $Q$ be given by the formula 
\[Q(x) = \sum_{\{i_1<\dots<i_d\}} a_{\{i_1, \dots, i_d\}} x_{i_1} \dots x_{i_d}.\] 
Then we can define $m$ by the formula
\[m(y_1,\dots,y_d)=\sum_{j_1,\dots,j_d}b_{j_1,\dots,j_d}(y_1)_{j_1}\dots (y_d)_{j_d},\]
where $b_{j_1,\dots,j_d}=(d!)^{-1}a_{i_1,\dots,i_d}$ if $(j_1,\dots,j_d)$ is a permutation of $(i_1,\dots,i_d)$ and is zero if $j_1,\dots,j_d$ are not all distinct. Then it is not hard to check that $Q(y)=m(y,y,\dots,y)$ for every $y\in\F_p^n$. 

The uniqueness of $m$ follows from the fact that a non-zero polynomial of degree less than $p$ does not take the value zero everywhere, combined with the symmetry of $m$ and the fact that
\[a_{i_1,\dots,i_d}=\sum_{\sigma\in S_d}b_{i_{\sigma(1)},\dots,i_{\sigma(d)}}\]
for every $i_1,\dots,i_d$.
\end{proof}

Let $\mathcal V_d$ be the class of polynomials that are linear combinations of monomials $x_{i_1} \dots x_{i_d}$ for which $i_1, \dots, i_d$ are not all distinct. We define the \emph{essential rank} of a homogeneous polynomial $Q$ of degree $d$ to be $\min_{V\in\mathcal V_d} \rk (Q+V)$, and we denote it by $\erk Q$. Note that this agrees with our previous definition when $d=2$.

\begin{lemma}\label{Essential rank of the polynomial at least Essential rank of the symmetric multilinear form} Let $m$ be the symmetric multilinear form $m$ defined in Lemma \ref{unique symmetric multilinear form}. Then the essential partition rank of $m$ is at least the essential rank of $Q$. \end{lemma}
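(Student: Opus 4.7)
The plan is to imitate the proof of Lemma \ref{The Essential rank of a quadratic form is at least the essential rank of the associated bilinear form}, specializing a low partition rank decomposition of $m+V$ to the diagonal. Suppose that $\epr m \le k$, so there exists a tensor $V: [n]^d \to \F_p$ supported on the set $E$ of tuples with a repeated coordinate such that $\pr(m+V) \le k$. I would then write the associated multilinear form as
\[(m+V)(x_1,\ldots,x_d) = \sum_{j=1}^k \alpha_j(x(I_j))\,\beta_j(x(J_j)),\]
where for each $j$ the pair $\{I_j, J_j\}$ is a nontrivial partition of $[d]$ and $\alpha_j, \beta_j$ are multilinear forms in the variables they depend on.

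The key step is the substitution $x_1 = \cdots = x_d = y$. On the left-hand side, $m(y,\ldots,y) = Q(y)$ by Lemma \ref{unique symmetric multilinear form}, while
\[V(y,\ldots,y) = \sum_{(i_1,\ldots,i_d) \in E} V(i_1,\ldots,i_d)\, y_{i_1}\cdots y_{i_d};\]
since each tuple in the support has at least one repeat, this polynomial, which I will call $V'$, lies in $\mathcal V_d$ directly from the definition. On the right-hand side, the substitution turns $\alpha_j$ and $\beta_j$ into polynomials $\tilde\alpha_j(y) = \alpha_j(y,\ldots,y)$ and $\tilde\beta_j(y) = \beta_j(y,\ldots,y)$ of degrees at most $|I_j|$ and $|J_j|$ respectively. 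Since $\{I_j, J_j\}$ is nontrivial, both degrees are strictly less than $d$ and sum to at most $d$, so the identity
\[Q(y) + V'(y) = \sum_{j=1}^k \tilde\alpha_j(y)\,\tilde\beta_j(y)\]
exhibits, via Definition \ref{rank of a polynomial}, the degree-$d$ rank of $Q+V'$ as at most $k$. Combined with $V' \in \mathcal V_d$, this yields $\erk Q \le k$, and taking $k = \epr m$ gives the claim.

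I do not anticipate any real obstacle: the argument is a direct transposition of the quadratic case, and the only nontrivial observation is that restricting to the diagonal a tensor supported on $E$ automatically yields a polynomial in $\mathcal V_d$, which is immediate from unpacking the definitions. Note also that the argument never uses any symmetry of $V$, which is fortunate because the minimum defining $\epr m$ ranges over all (not necessarily symmetric) tensors supported on $E$.
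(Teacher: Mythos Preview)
Your proof is correct and follows essentially the same approach as the paper's: assume $\epr m \le k$, take a tensor supported on $E$ witnessing this, specialize the resulting partition-rank decomposition to the diagonal $x_1=\cdots=x_d=y$, and observe that the diagonal of the correction tensor lands in $\mathcal V_d$ while the rank-one pieces specialize to products of polynomials of degree strictly less than $d$. The paper's argument is identical up to notation (it calls the correction tensor $m'$ rather than $V$).
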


\begin{proof} Suppose that $\epr m \le k$. Then there exists a $d$-linear form $m': (\F_p^n)^d \rightarrow \F_p$ such that $ m'_{(i_1,\dots,i_d)} = 0$ whenever $ i_1,\dots,i_d \in \lbrack n \rbrack$ are distinct and such that $ \pr (m + m') \le k$. We evaluate \[ (m+m')(y, \dots, y) = m(y, \dots, y) + m'(y, \dots, y) \] for all $y \in \F_p^n$: the first term of the right-hand side is $ Q(y)$, and by the condition on the $ m'_{(i_1,\dots,i_d)}$ with $ i_1,\dots,i_d \in \lbrack n \rbrack$ pairwise distinct, the second term $ m'(y, \dots, y)$ is a linear combination of monomials of the type $y_{i_1}^{s_1}\dots y_{i_d}^{s_d}$ with $i_1,\dots,i_d$ not pairwise distinct, so we can write it as $ V(y)$ for some polynomial $ V$ spanned by these monomials, which shows that $ \erk Q \le \rk (Q+V)$.

Because $ \pr (m + m') \le k$ there exist for each $i \in \lbrack k \rbrack$ a bipartition $\{J_i, J_i'\}$ of $\lbrack d \rbrack$ (with $J_i, J_i'$ both non-empty) and multilinear forms $ M_{i,1}: (\F_p^n)^{J_i} \rightarrow \F_p$, $M_{i,2}: (\F_p^n)^{J_i'} \rightarrow \F_p$  such that 
\[ (m + m'): (y^1,\dots,y^d) \mapsto \sum_{i=1}^k M_{i,1}(y(J_i)) M_{i,2}(y(J_i')).\] 
For each $i \in \lbrack k \rbrack$, let the polynomials $Q_i, R_i$ be defined by $ Q_i(y) = M_{i,1}(y, \dots, y)$ and $ R_i(y) = M_{i,2}(y, \dots, y)$. Then 
 \[ (Q+V)(y) = \sum_{i=1}^k Q_i(y) R_i(y)\] 
 for every $y \in \F_p^n$. Because the sets $J_i,J_i'$, $i \in \lbrack k \rbrack$ are all strict subsets of $\lbrack d \rbrack$ we have $\deg Q_i, \deg R_i < \deg Q$ for each $i \in \lbrack k \rbrack$. Therefore, $ \erk Q \le k$. \end{proof}

We are now ready to prove the main result of this section. In the proof we shall make use of Theorem \ref{Disjoint rank minors theorem}. Recall that for each $d$ this theorem yields a function $\Lambda_d:\N\to\N$ such that if the essential partition rank of a tensor is at least $\Lambda_d(l)$, then the disjoint partition rank is at least $l$.

\begin{proposition}\label{Main theorem for alphabets of size 2} Let $p \ge 3$ be a prime, let $d \ge 2$ be a positive integer, let $S$ be a subset of $\F_p$ of size $2$, and let $\epsilon > 0$. Let $P: \F_p^n \rightarrow \F_p$ be a polynomial of degree $d$ and suppose that there exists $t \in \F_p^*$ with $|\bias_{t,S} P| \geq \epsilon$. Then $\rk_S P \le \Lambda_d(B_{d,p,1/2}(\epsilon^{2^d})) + 1$. \end{proposition}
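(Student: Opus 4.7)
I would argue by contrapositive: suppose $\rk_S P$ exceeds the claimed bound, and deduce that $|\bias_{t,S} P| < \epsilon$ for every $t \in \F_p^*$, contradicting the hypothesis. The overall plan is to pass from $P$ to an associated symmetric $d$-linear form $m$, use Theorem \ref{Disjoint rank minors theorem} to locate pairwise disjoint $X_1,\dots,X_d \subset [n]$ on which $m$ still has large partition rank, and then relate $\bias_{t,S} P$ to the bias of this restriction via the decoupling identity of Proposition \ref{Decoupling inequality for polynomials}, so as to apply Proposition \ref{Equidistribution of multilinear forms for distributions}.

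The first reduction is $\rk_S P \le \erk Q + 1$, where $P = Q + R$ is the decomposition with $Q$ the linear combination of squarefree degree-$d$ monomials and $R$ the remainder. By Lemma \ref{Remainder with rank at most 1 for sets of size 2}, whose proof applies verbatim to any element of $\mathcal V_d$, both $R$ and any chosen $V \in \mathcal V_d$ agree on $S^n$ with polynomials $R',V'$ of degree at most $d-1$. For $\erk Q = k$, picking $V$ with $\rk(Q+V) = k$ and writing $Q+V = \sum_{i=1}^{k} Q_i R_i$ with $\deg Q_i,\deg R_i < d$ lets me express $P$ on $S^n$ as $\sum_{i=1}^{k} Q_i R_i + (R'-V')$, whose degree-$d$ rank is at most $k+1$. (If $Q=0$ then $P$ already agrees on $S^n$ with a polynomial of degree at most $d-1$, and induction on $d$ applies, the base case $d=2$ being handled in the previous section.) This is the only step where the hypothesis $|S|=2$ is used in an essential way.

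Let $m$ be the symmetric $d$-linear form associated with $Q$ via Lemma \ref{unique symmetric multilinear form}. The lower bound on $\rk_S P$ combined with the previous paragraph and Lemma \ref{Essential rank of the polynomial at least Essential rank of the symmetric multilinear form} gives $\epr m \ge \erk Q > \Lambda_d(B_{d,p,1/2}(\epsilon^{2^d}))$, and then Theorem \ref{Disjoint rank minors theorem} produces pairwise disjoint $X_1,\dots,X_d \subset [n]$ with $\pr(m(X_1 \times \dots \times X_d)) > B_{d,p,1/2}(\epsilon^{2^d})$. Setting $X_0 = [n]\setminus(X_1\cup\dots\cup X_d)$ and replacing $X_1$ by $X_1 \cup X_0$ yields a partition of $[n]$; since restriction cannot increase partition rank, the inequality $\pr(m((X_1\cup X_0) \times X_2 \times \dots \times X_d)) > B_{d,p,1/2}(\epsilon^{2^d})$ is preserved.

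Finally, I would apply Proposition \ref{Decoupling inequality for polynomials} to the polynomial $tP$ (still of degree $d$ as $t \ne 0$) with this partition and $D$ the uniform distribution on $S$. The only surviving monomials in $(tP)_{\{X_1\cup X_0,X_2,\dots,X_d\}}$ are squarefree of degree $d$ with exactly one variable in each part, and unwinding the formula for $m$ gives
\[(tP)_{\{X_1\cup X_0,X_2,\dots,X_d\}}(x) = t \cdot d! \cdot m(x|_{X_1\cup X_0},x|_{X_2},\dots,x|_{X_d}).\]
Writing $D' = D - D$, which satisfies $D'(y) \le 1/2$ for every $y \in \F_p$ because $|S|=2$, the decoupling inequality becomes
\[|\bias_{t,S} P|^{2^d} \le \bias_{t d!,(D',\dots,D')}\bigl(m((X_1 \cup X_0) \times X_2 \times \dots \times X_d)\bigr).\]
Since $p > d$ we have $t d! \ne 0$ in $\F_p$, and the contrapositive of Proposition \ref{Equidistribution of multilinear forms for distributions} with $c = 1/2$ forces the right-hand side to have modulus strictly less than $\epsilon^{2^d}$, so $|\bias_{t,S} P| < \epsilon$. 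The most delicate bookkeeping in this plan is the exact identification of the decoupled polynomial with the restricted symmetric multilinear form (including the $d!$ factor), since this is what translates the partition-rank lower bound on $m(X_1 \times \dots \times X_d)$ into a bias bound via the multilinear equidistribution.
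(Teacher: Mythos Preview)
Your proposal is correct and follows essentially the same route as the paper's proof: reduce $\rk_S P \le \erk Q + 1$ via Lemma \ref{Remainder with rank at most 1 for sets of size 2}, pass to the symmetric $d$-linear form $m$, invoke Theorem \ref{Disjoint rank minors theorem} to obtain disjoint $X_1,\dots,X_d$ on which $m$ has large partition rank, apply the decoupling inequality of Proposition \ref{Decoupling inequality for polynomials}, identify the decoupled polynomial with $d!\cdot m$ restricted, and finish with Proposition \ref{Equidistribution of multilinear forms for distributions}. You are in fact slightly more careful than the paper in one respect: Proposition \ref{Decoupling inequality for polynomials} is stated for a \emph{partition} of $[n]$, and you explicitly absorb the leftover coordinates $X_0$ into $X_1$, noting that this can only increase the partition rank of the restriction; the paper leaves this step implicit.
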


\begin{proof} If $P$ agrees with a lower-degree polynomial $P_1$ on $S^n$, then we can replace $P$ by $P_1$ and apply induction on $d$. Otherwise, we note first that $\rk_S P \le (\erk Q) + 1$, since if $R_0$ is a linear combination of monomials of the type $y_{i_1}^{s_1}\dots y_{i_d}^{s_d}$ with $i_1,\dots,i_d$ not all distinct then by applying Lemma \ref{Remainder with rank at most 1 for sets of size 2} to $R+R_0$ we have $\rk_S (R+R_0) \le 1$, which implies that
\[\rk_SP\leq\rk(Q-R_0)+\rk_S(R+R_0)\leq\rk(Q-R_0)+1.\]
(The assumption that $P$ does not agree with a lower-degree polynomial on $S^n$ gives us the subadditivity here.) Choosing $R_0$ such that $\erk Q=\rk(Q-R_0)$ gives the bound claimed. 

If $\erk Q \ge \Lambda_d(B_{d,p,1/2}(\epsilon^{2^d}))$ then by Lemmas \ref{Remainder with rank at most 1 for sets of size 2} and \ref{Essential rank of the polynomial at least Essential rank of the symmetric multilinear form} there exists a symmetric $d$-linear form $m:(\F_p^n)^d \rightarrow \F_p$ such that $Q(y) = m(y, \dots, y)$ for every $y \in \F_p^n$, and such that $\epr m \ge \Lambda_d(B_{d,p,1/2}(\epsilon^{2^d}))$. By Theorem \ref{Disjoint rank minors theorem} we can find pairwise disjoint subsets $X_1, \dots, X_d \subset \lbrack n \rbrack$ such that $\pr m(\F^{X_1} \times \dots \times \F^{X_d}) \ge B_{d,p,1/2}(\epsilon^{2^d})$. We now apply Proposition \ref{Decoupling inequality for polynomials}. Letting $D$ be the distribution on $\F_p$ defined by $D(x) = |S|^{-2} \sum_{y,z \in S} S(y) S(z) 1_{y-z=x}$, we have \begin{equation} |\E_{x \in S^n} \omega_p^{tP(x)}|^{2^d} \le \E_{x \sim D} \omega_p^{tP_{\{X_1,\dots,X_d\}}(x)} \label{decoupling inequality used in the proof for size 2 alphabets} \end{equation} for each $t \in \F_p^*$. We can write \begin{align*} P_{\{X_1,\dots,X_d\}}(y) &= \sum_{\sigma \in \mathcal{S}_d} m(\F^{X_{\sigma(1)}} \times \dots \times \F^{X_{\sigma(D)}})(y(X_{\sigma(1)}),\dots,y(X_{\sigma(D)})) \\ &= d! m(\F^{X_1} \times \dots \times \F^{X_d})(y(X_1), \dots, y(X_d)) \end{align*} where the second equality follows from the symmetry of $m$. Since $D$ has maximum value at most $1/2$, and $\pr m(\F^{X_1} \times \dots \times \F^{X_d}) \ge B_{d,p,1/2}(\epsilon^{2^d})$, by Proposition \ref{Equidistribution of multilinear forms for distributions} and \eqref{decoupling inequality used in the proof for size 2 alphabets} we get $|\bias_{t,S} P| < \epsilon$ for all $t \in \F_p^*$, which is incompatible with our assumption. So $\erk Q \le \Lambda_d(B_{d,p,1/2}(\epsilon^{2^d}))$. The result follows. \end{proof}

\section{Equidistribution of combinations of multilinear forms with several choices of powers} \label{Section: Equidistribution of multi-alphabet multilinear forms}

In the previous section, where we proved Theorem \ref{Main theorem} in the case where the subset $S$ of $\F_p$ had size $2$, part of the proof of Proposition \ref{Main theorem for alphabets of size 2} relied heavily on Claim \ref{Remainder with rank at most 1 for sets of size 2}, which ensured that if $P$ had high rank, then its part $Q$ made up of monomials of the form $x_{i_1} \dots x_{i_d}$ necessarily had high essential rank. For $|S| \ge 3$ this fact becomes false, so we have to consider the case where $P$ has high rank but $Q$ has bounded essential rank. 

Let us very briefly sketch the argument that will follow. Given a polynomial $P:\F_p^n\to\F_p$ of degree $d$ and an alphabet $S$ of size at least $d+1$ (if it has size at most $d$ then we can replace $P$ by a polynomial of degree at most $d-1$ that takes the same values on $S^n$), we write $P$ as a linear combination of monomials, and then split it up according to the forms of those monomials -- that is, the sequence of indices used, in non-increasing order. Given a non-increasing sequence $s=(s_1,\dots,s_k)$ of positive integers with $s_1+\dots+s_k\leq d$, we write $P_s$ for the polynomial obtained when we retain just the linear combination of monomials of the form $x_{i_1}^{s_1}\dots x_{i_k}^{s_k}$, where $i_1,\dots,i_k$ are distinct numbers between 1 and $n$. We also write $|s|$ for the length of the sequence $s$, that is, for the number $k$.

With each polynomial $P_s$ with $|s|=k$ we can associate a polynomial $Q_s$ in $k$ variables $x(1),\dots,x(k)\in\F_p^n$ of the form $Q_s(x(1),\dots,x(k))=b_s(x(1)^{s_1},\dots,x(k)^{s_k})$, such that $b_s$ is a $k$-linear form, $P_s(x)=Q_s(x,x,\dots,x)$ for every $x$, and $b_s$ is symmetric in $x(i)$ and $x(j)$ whenever $s_i=s_j$. A key lemma, which we shall prove in this section, will be that if any one of the $|s|$-linear forms $b_s$ has high rank (the notion of essential rank does not arise here because we regard the variables $x(1),\dots,x(k)$ as belonging to distinct copies of $\F_p^n$), then $P$ has small bias on $S^n$. Therefore, if $P$ has large bias on $S^n$, we may conclude that the multilinear forms $b_s$ all have low rank. In the next section, with the help of Theorem \ref{Disjoint rank minors theorem}, we shall deduce from this that $P$ agrees with a low-rank polynomial on $S^n$. 
\bigskip

\iftrue
\else

In order to illustrate the main additional difficulty let us sketch the proof of Theorem \ref{Main theorem} in the case $\deg P = 3$ and $|S| \ge 4$.

Let us assume that $P$ has large enough rank and let us show that $|\bias_{t,S} P|$ is small for all $t \in \F_p^*$. If for some nonnegative integer $r$ we have $\erk Q \le r$, then we can write \[ Q = V + \sum_{i=1}^r Q_i R_i\] where $V$ is a polynomial where all monomials involve at most two distinct variables and $Q_i, R_i$ are polynomials with degree at most $2$. By Lemma \ref{Reduction to one function} it suffices to show that for every $a \in \F^r$ and $b \in \F^r$ the linear combination \begin{equation} P_{new}:= V+R+ \sum_{i=1}^r a_i Q_i + \sum_{i=1}^r b_i R_i \label{first reduction for polynomials of degree 3} \end{equation} satisfies that $|\bias_{t,S} P_{new}|$ is small for each $t \in \F_p^*$. Since the rank of $P$ is large enough and because the part $\sum_{i=1}^r a_i Q_i + \sum_{i=1}^r b_i R_i$ has degree at most $2$, we know that the linear combination $P_{new}$ still has degree $3$ and high rank. Furthermore all of its monomials involve at most two distinct variables, as all remaining terms are of the following three types: \begin{enumerate} \item $x_i^2x_j$ with $i,j$ distinct \item $x_ix_j$ with $i,j$ distinct \item$x_i^a$ for some $a \in \{0,1,2,3\}$. \end{enumerate} We decompose $P_{new}$ as the sum of three polynomials $(Q_{new})_{2,1}, (Q_{new})_{1,1}$ and $R_{new}$ corresponding to these three respective parts.

If $X,Y$ are disjoint subsets of $\lbrack n \rbrack$, then the contribution of $R_{new}$ to the polynomial $(P_{new})_{\{X,Y\}}$ is zero, but both $(Q_{new})_{2,1}, (Q_{new})_{1,1}$ contribute rather than only $(Q_{new})_{2,1}$. Letting $Ev_2:(y_1, \dots, y_n) \rightarrow (y_1^2, \dots, y_n^2)$ we can construct a unique bilinear form $b_{2,1}$ and a unique symmetric bilinear form $b_{1,1}$ such that \[ (Q_{new})_{2,1}(y) = b_{2,1}(Ev_2(y),y) \text{ and } (Q_{new})_{1,1}(y) = b_{1,1}(y,y) \] hold for all $y \in \F_p^n$, and such that $b_{2,1}$ (resp. $b_{1,1}$) has high rank provided that $(Q_{new})_{2,1}$, (resp. $(Q_{new})_{1,1}$) has high rank. We can then write \begin{equation} (P_{new})_{\{X,Y\}}(y) = b_{2,1}(Ev_2(y(X)), y(Y)) + b_{2,1}((y(X)), Ev_2(y(Y)))+ 2 b_{1,1}(y(X),y(Y)) \label{sum of two bilinear forms with evaluation} \end{equation} for all $y \in \F_p^n$.

One technical difficulty is to show that it suffices that $b_{2,1}(\F_p^X \times \F_p^Y)$ has high rank for the linear combination \eqref{sum of two bilinear forms with evaluation} to be have small bias, when $y$ is distributed according to $D^n$ for some probability distribution $D$ such that $\max_{x \in \F_p^*} D(x)$ is bounded away from $1$. In fact, we will be able to show that it suffices for either of $b_{2,1}(\F_p^X \times \F_p^Y)$, $b_{2,1}(\F_p^Y \times \F_p^X)$ or $b_{1,1}(\F_p^X \times \F_p^Y)$ to have high rank. The generalisation of this difficulty to higher-order linear combinations of multilinear forms will be the focus of the present section.

If neither $(Q_{new})_{2,1}$ nor $(Q_{new})_{1,1}$ has high rank, then we can perform another reduction similar to \eqref{first reduction for polynomials of degree 3}, this time reducing to a polynomial $P_{new,2}$ which is linear combination of monomials each involving at most one variable: writing \[P_{new,2}(x) = \sum_{i=1}^n P_i(x_i) \] for some polynomials $P_i: \F_p \rightarrow \F_p$ with degree at most $3$ we can factor \[|\E_{x \in S^n} \omega_p^{tP(x)}| = \prod_{i=1}^n |\E_{x_i \in S} \omega_p^{tP_i(x_i)}|.\] Since $|S| \ge 4$ the right-hand side is at most $C(p,|S|^{-1})^{|\{i \in \lbrack n \rbrack: P_i \neq 0\}|} \le C(p,|S|^{-1})^{(\rk P_{new,2}) - 1}$, which finishes the proof.
\fi

We now begin adapting some of the results that led to the proof of Proposition \ref{Equidistribution of multilinear forms for distributions}, which stated that multilinear forms that are significantly biased with respect to product distributions that are not too close to being atomic have low partition rank. Those results concerned a distribution $D$ on $\F_p$ that is not concentrated at a single point. We now need to generalize them to results concerning a distribution on $\F_p^k$ that is not concentrated on a proper affine subspace of $\F_p^k$.
\medskip

The first result we shall adapt is Lemma \ref{Writing a distribution as a convolution with 0,1 for F_p}. 

\begin{lemma}\label{Writing a distribution as a convolution with 0,1} Let $p \ge 3$ be an odd integer, let $k \ge 1$ be a positive integer, let $U$ be the uniform distribution on the subset $\{0,1\}\subset\F_p$, and let $D$ be a distribution on $\F_p^k$ such that for each $x \in \F_p^k$, $|D(x) - p^{-k}| \le p^{-2k}$. Then there exists a probability distribution $E$ on $\F_p^k$ such that $D = U^k + E$. \end{lemma}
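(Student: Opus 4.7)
The plan is to mimic the $k=1$ proof given just above the statement by tensorizing its ``deconvolution'' construction. Recall that for $k=1$ one defines $h:\F_p\to\R$ by $h(x)=(-1)^{[x]}$, obtaining $\|h\|_\infty = 1$ and $U*h = \delta_0$ (the latter requires $p$ odd, so that $h(p-1)=h(-1)=1$). The upshot is that convolution with $U$ is invertible on signed functions, with an inverse of unit sup-norm. The task is simply to carry this over to several coordinates, all the cost being absorbed by the quantitative hypothesis on $D$.

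First I would introduce $H := h^{\otimes k}$ on $\F_p^k$, i.e.\ $H(x_1,\dots,x_k)=(-1)^{[x_1]+\dots+[x_k]}$, so that $\|H\|_\infty = 1$. Since convolution factorizes across product groups and $U^k = U^{\otimes k}$, we obtain $U^k * H = (U*h)^{\otimes k} = \delta_0$ on $\F_p^k$. Hence for any function $f$ on $\F_p^k$, writing $g := f * H$ gives $U^k * g = f$.

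Next I would apply this with $f := D - p^{-k}\mathbbm{1}$, where $\mathbbm{1}$ denotes the constant function $1$ on $\F_p^k$ (so $p^{-k}\mathbbm{1}$ is the uniform distribution). By Young's inequality,
\[\|g\|_\infty \le \|f\|_1 \,\|H\|_\infty \le p^k \cdot p^{-2k} \cdot 1 = p^{-k},\]
using the pointwise hypothesis $|D(x)-p^{-k}|\le p^{-2k}$. Therefore $E := g + p^{-k}\mathbbm{1}$ is non-negative. Since $U^k$ has total mass $1$ we have $U^k * (p^{-k}\mathbbm{1}) = p^{-k}\mathbbm{1}$, and hence $U^k * E = f + p^{-k}\mathbbm{1} = D$. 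Finally, $\sum_x E(x) = \sum_x g(x) + 1 = \bigl(\sum_x f(x)\bigr)\bigl(\sum_x H(x)\bigr) + 1 = 0 + 1 = 1$, since $\sum f = \sum D - 1 = 0$. Thus $E$ is a probability distribution with the required property.

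I do not foresee any real obstacle: the argument is a direct tensorization of the $k=1$ case, and the $\ell^1$-bound on $f$ exactly matches the pointwise hypothesis on $D$. The oddness of $p$ enters only in the consistency check $h(-1)=h(p-1)$ that made the one-dimensional $h$ well-defined; it plays no further role in the tensor step.
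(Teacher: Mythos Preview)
Your proof is correct and follows essentially the same approach as the paper: both tensorize the one-dimensional deconvolution $U*\phi=\delta_0$ to $\F_p^k$, set $g=(D-p^{-k})*\phi^{\otimes k}$, and take $E=g+p^{-k}$. Your write-up is slightly more explicit (you invoke Young's inequality by name and verify $\sum_x E(x)=1$ directly, whereas the paper leaves the latter implicit from $U^k*E=D$), but there is no substantive difference.
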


\begin{proof} We saw in the proof of Lemma \ref{Writing a distribution as a convolution with 0,1 for F_p} that there is a $\pm 1$-valued function $\phi$ on $\F_p$ such that $U*\phi$ takes the value 1 at 0 and 0 everywhere else. It follows that $U^k*\phi^k$ takes the value 1 at 0 (where now $0$ is an element of $\F_p^k$) and 0 everywhere else, where by $\phi^k$ we mean the function $\phi^k(x)=\phi(x_1)\dots\phi(x_k)$.

Just as in the one-dimensional case, it follows that for every function $f:\F_p^k\to\R$ there is a function $g:\F_p^k\to\R$ with $\|g\|_\infty\leq\sum_x|f(x)|$ such that $f=g*U$. We apply this to the function $f(x)=D(x)-p^{-k}$, noting that $\sum_x|f(x)|\leq p^{-k}$ by our hypothesis. Then $g*U=D-p^{-k}$, from which it follows that $(g+p^{-k})*U=D$. Since $\|g\|_\infty\leq p^{-k}$, the function $g+p^{-k}$ is a probability distribution. \end{proof}

Proposition \ref{Density of sums of two sets} will be used as is, and we start by adapting Lemma \ref{Writing a distribution as a convolution with 0,1 for F_p}, Lemma \ref{Mixing of distributions in F_p}, and Proposition \ref{Dense sumsets for distributions on $F_p$}. For $k \ge 1$ a positive integer and $p \ge 3$ a prime, let $U_{\{0,1\},k,p}$ be the distribution on $\F_p^k$ defined by $U_{\{0,1\},k,p}(x) = 2^{-k}$ if $ x_1, \dots, x_k \in \{0,1\}$ and $U_{\{0,1\},k,p}(x) = 0$ otherwise.

Now we shall modify Lemma \ref{Mixing of distributions in F_p}. For $p$ a prime, for $k$ a positive integer, and for $c>0$, let $M(p,c,k)= 2k \log p / \log (C(p,c)^{-1}) = k M(p,c)$.

\begin{lemma}\label{Mixing of distributions}

Let $p$ be a prime, let $k \ge 1$ be a positive integer, and let $c>0$. If $D$ is a distribution on $\F_p^k$ such that $D(W) \le 1-c$ for every strict affine subspace $W$ of $\F_p^k$ then for all $M \ge 2kp^2\log p/c\pi^2$ the distribution $MD$ satisfies $|MD(x) - p^{-k}| \le p^{-2k}$ for each $x \in \F_p^k$. \end{lemma}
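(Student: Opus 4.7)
The plan is to imitate the Fourier-analytic proof of the one-dimensional Lemma \ref{Mixing of distributions in F_p}, replacing $\F_p$ by $\F_p^k$ throughout and using the hypothesis on affine subspaces to control the Fourier coefficients $\hat D(r)$ via a reduction to the scalar case.

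First, I would expand $MD(x)$ using Fourier inversion on $\F_p^k$: writing $\hat D(r) = \sum_y D(y) \omega^{r \cdot y}$ for $r \in \F_p^k$, the convolution identity $\widehat{MD} = (\hat D)^M$ and Fourier inversion give
\[ MD(x) - p^{-k} = p^{-k} \sum_{r \in \F_p^k \setminus \{0\}} (\hat D(r))^M \omega^{-r \cdot x}. \]
By the triangle inequality it therefore suffices to show that for every non-zero $r$, the bound $|\hat D(r)| \le 1 - c\pi^2/p^2$ holds (up to replacing $c$ by $\min\{c,1/2\}$ so that the hypothesis of Lemma \ref{average of roots of unity} applies; this only affects the constant in a trivial way).

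To obtain this bound I would interpret $\hat D(r)$ as the expectation $\E_{z \sim D_r} \omega^z$, where $D_r$ is the pushforward of $D$ under the linear map $y \mapsto r \cdot y$ from $\F_p^k$ to $\F_p$. For each $a \in \F_p$, the fiber $\{y \in \F_p^k : r \cdot y = a\}$ is an affine hyperplane of $\F_p^k$, hence a strict affine subspace (since $r \neq 0$). The hypothesis on $D$ therefore yields $D_r(a) = D(\{y : r \cdot y = a\}) \le 1-c$ for every $a \in \F_p$. Applying Lemma \ref{average of roots of unity} to $D_r$ gives $|\hat D(r)| \le 1 - c\pi^2/p^2$.

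Plugging this back in, $|MD(x) - p^{-k}| \le p^{-k}(p^k - 1)(1 - c\pi^2/p^2)^M \le e^{-c\pi^2 M/p^2}$, and the lower bound on $M$ in the statement is chosen precisely so that the right-hand side is at most $p^{-2k}$. I do not anticipate any genuine obstacle: the entire argument is a direct lift of the one-dimensional proof, and the only conceptual point is the observation that non-concentration of $D$ on strict affine subspaces is exactly the condition needed for each pushforward $D_r$ to satisfy the hypothesis of Lemma \ref{average of roots of unity}.
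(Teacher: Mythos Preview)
Your proposal is correct and follows essentially the same route as the paper: both expand $MD(x)$ via Fourier inversion, reduce the bound on $|\hat D(r)|$ for $r\ne 0$ to the scalar Lemma~\ref{average of roots of unity} by pushing $D$ forward along $y\mapsto r\cdot y$ and invoking the affine-hyperplane hypothesis, and then conclude from the choice of $M$. Your version is in fact slightly more careful (you flag the $c\le 1/2$ requirement of Lemma~\ref{average of roots of unity}, which the paper elides), but there is no substantive difference.
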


\begin{proof}
The proof is essentially the same as that of Lemma \ref{Mixing of distributions in F_p}. The one thing we need to observe is that the condition on affine subspaces implies an upper bound on the size of each non-trivial Fourier coefficient of $D$. Indeed,
\[\hat D(r)=\sum_yD(y)\omega^{r.y}=\sum_t\Big(\sum_{r.y=t}D(y)\Big)\omega^t\leq 1-c\pi^2/p^2,\]
where the inequality follows from Lemma \ref{average of roots of unity} and the fact that $\sum_{r.y=t}D(y)\leq 1-c$ for each $r,t$, by hypothesis. 

This time, the probability that $r=0$ is $p^{-k}$, so we deduce that $|MD(x)-p^{-k}|\leq(1-c\pi^2/p^2)^M\leq e^{-c\pi^2M/p^2}$. The result follows from our assumed lower bound on $M$. 
\end{proof}

We now generalize Proposition \ref{Dense sumsets for distributions on $F_p$}. Again, the generalization is straightforward, but we write it out in full, just to be clear about the details of the small changes needed.

\begin{proposition} \label{Dense sumsets for distributions on $F_p^k$} Let $p\geq 3$ be a prime, let $k \ge 1$ be a positive integer, let $c>0$, let $M=2kp^2\log p/c\pi^2$, and let $D$ be a probability distribution on $ \F_p^k$ such that $ D(W) \le 1-c$ for every strict affine subspace $ W$ of $ \F_p^k$. Then if $A$ is a subset of $ (\F_p^k)^n$ with density $\epsilon$ inside $ (\F_p^k)^n$ with respect to the distribution $D^n$, then $ (p-1)MA$ has density at least $\epsilon^{(p-1)M}$ inside $(\F_p^k)^n$ with respect to the uniform distribution on $(\F_p^k)^n$. \end{proposition}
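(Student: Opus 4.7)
The plan is to mimic the proof of Proposition \ref{Dense sumsets for distributions on $F_p$} verbatim, with the one-dimensional lemmas replaced by their already-established $k$-dimensional counterparts, and with the crucial observation that the additive group $(\F_p^k)^n$ is canonically isomorphic to $\F_p^{kn}$, so that the combinatorial engine (Proposition \ref{Density of sums of two sets}) can be applied directly to subsets of the cube $\{0,1\}^{kn}$.

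First I would apply Lemma \ref{Mixing of distributions} to the distribution $D$ on $\F_p^k$ with the given value of $M$, yielding $|MD(x) - p^{-k}| \le p^{-2k}$ for every $x \in \F_p^k$. This puts us in a position to apply Lemma \ref{Writing a distribution as a convolution with 0,1}, producing a probability distribution $E$ on $\F_p^k$ with $MD = U_{\{0,1\},k,p} + E$. Next I would upgrade the density bound: if $A$ has density $\epsilon$ under $D^n$, then by drawing $x_1,\dots,x_M$ i.i.d.\ from $D^n$ and noting that $x_1+\dots+x_M$ is distributed as $(MD)^n$, the event $\{x_i \in A \text{ for all } i\}$ of probability $\epsilon^M$ forces $x_1+\dots+x_M \in MA$. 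Hence $MA$ has density at least $\epsilon^M$ with respect to $(MD)^n$.

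Now I would write a random sample from $(MD)^n$ as $y+z$ with $y \sim (U_{\{0,1\},k,p})^n$ and $z \sim E^n$ independent. Since $\Pr[y+z \in MA] \ge \epsilon^M$, we may fix a particular $z \in (\F_p^k)^n$ for which $\{y : y+z \in MA\} = MA - z$ has density at least $\epsilon^M$ with respect to $(U_{\{0,1\},k,p})^n$. Setting $Y = (MA - z) \cap (\{0,1\}^k)^n$ and identifying $(\{0,1\}^k)^n$ with $\{0,1\}^{kn}$, we obtain a subset $Y \subset \{0,1\}^{kn}$ of density at least $\epsilon^M$ inside $\{0,1\}^{kn}$.

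Finally I would apply Proposition \ref{Density of sums of two sets} iteratively $p-2$ times inside $\{0,1,\dots,p-1\}^{kn}$: at the $j$-th step we add $Y$ to $jY \subset \{0,\dots,j\}^{kn}$, producing $(j+1)Y \subset \{0,\dots,j+1\}^{kn}$ of density at least $\epsilon^{(j+1)M}$. After $p-2$ iterations, $(p-1)Y$ has density at least $\epsilon^{(p-1)M}$ inside $\{0,\dots,p-1\}^{kn}$, which is exactly $(\F_p^k)^n$ equipped with its uniform distribution. Since $(p-1)Y + (p-1)z \subset (p-1)MA$ and translation preserves uniform density, the proposition follows. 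The only step requiring genuine thought is the identification that lets Proposition \ref{Density of sums of two sets} be applied in the group $(\F_p^k)^n$; once this is noticed, the rest is a clean transcription of the $k=1$ argument.
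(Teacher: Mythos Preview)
Your proposal is correct and follows essentially the same argument as the paper's own proof, step for step: apply Lemma \ref{Mixing of distributions} and then Lemma \ref{Writing a distribution as a convolution with 0,1} to factor $MD$, pass to $MA$ with density $\epsilon^M$ under $(MD)^n$, average out the $E$-part to land in $(\{0,1\}^k)^n$, identify this with $\{0,1\}^{kn}$, and iterate Proposition \ref{Density of sums of two sets} $p-2$ times. The paper explicitly notes that the generalization from $k=1$ is straightforward, and you have carried it out in exactly the intended way.
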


\begin{proof} Lemma \ref{Mixing of distributions} implies that $|(MD)(\{x\}) - p^{-k}| \le p^{-2k}$ for every $x \in \F_p$. Applying Lemma \ref{Writing a distribution as a convolution with 0,1} to $MD$ we obtain a probability distribution $E$ on $\F_p$ such that $MD = U^k + E$. Because $A$ has density at least $\epsilon$ with respect to $D^n$, the set $MA$ has density at least $\e^M$ in $(\F_p^k)^n$ with respect to the distribution $(MD)^n$, since, as before, if $x_1,\dots,x_M$ are chosen independently according to the distribution $D$, the probability that $x_1+\dots+x_M\in MA$ is at least the probability that each $x_i$ belongs to $A$. 

Suppose now that we choose $y$ and $z$ independently at random from $(\F_p^k)^n$, according to the distributions $(U^k)^n$ and $E^n$, respectively. Then $y+z$ is distributed according to $(U^k+E)^n=(MD)^n$, so the probability that $y+z\in MA$ is at least $\e^M$. It follows that there exists $z\in(\F_p^k)^n$ such that the density with respect to $(U^k)^n$ of the set $\{y\in(\F_p^k)^n:y+z\in MA\}=MA-z$ is at least $\e^M$. In other words, letting $Y=(MA-z)\cap\{0,1\}^n$, we have that $Y$ has density at least $\e^M$ inside $(\{0,1\}^k)^n$. Identifying this with $\{0,1\}^{kn}$ and $(\F_p^k)^n$ with $\F_p^{kn}$, we can apply Proposition \ref{Density of sums of two sets} $p-2$ times to obtain the conclusion that $(p-1)Y$ has density at least $\epsilon^{(p-1)M}$ inside $(\F_p^k)^n$ with respect to the uniform distribution on $(\F_p^k)^n$. Since $(p-1)(Y+z)$ is contained in $(p-1)MA$, it follows that $ (p-1)MA$ also has density at least $\epsilon^{(p-1)M}$. 
\end{proof}

In what follows, we shall often consider a non-empty finite set $\Sigma$ and linearly independent functions $\pi_1,\dots,\pi_k:\Sigma\to\F_p$ that do not contain any non-zero constant function in their linear span. An important special case of this, which we shall need in Section \ref{Section: The general polynomial case}, is when $\Sigma$ is a subset $S\subset\F_p$ of size at least $k+1$, and $\pi_i(x)=x^i$ for $i=1,2,\dots,k$, since any non-trivial linear combination of the $\pi_i$ is then a polynomial of degree between 1 and $k$, which cannot be constant on a set of size greater than $k$. However, it will be convenient to us to prove our results in the more general set-up below.

If $\pi:\Sigma\to\F_p$, we shall also write $\pi^n:\Sigma^n\to\F_p^n$ for the map that applies $\pi$ pointwise, that is, for the map that takes $x=(x_1,\dots,x_n)$ to $\pi^n(x)=(\pi(x_1),\dots,\pi(x_n))$. And for any function $f: \Sigma^n \rightarrow \F_p$ and any $t\in \F_p^*$ we shall write $\bias_{t} f$ for the quantity $\E_{x \in \Sigma^n} \omega_p^{tf(x)}$.

Recall that we define the rank of a linear form to be the size of its support -- that is, for the number of its non-zero coefficients with respect to the standard basis.

\begin{proposition}\label{Equidistribution of multi-alphabet linear forms} Let $p$ be a prime, let $C_0 \ge 1$ be a positive integer, let $\Sigma$ be a non-empty set of size at most $C_0$, and let $\pi_1,\dots,\pi_k$: $ \Sigma \rightarrow \F_p$ be functions such that $\pi_1$ is not a linear combination of $\pi_2,\dots, \pi_k$ and a constant function. Let $ l_1,\dots,l_k: \F_p^n \rightarrow \F_p$ be linear forms. Then for each $ (a_1,\dots,a_k) \in \F_p^k$ with $ a_1 \neq 0$, and each $t \in \F_p^*$, 
\[ |\bias_{t} (a_1 l_1\circ\pi_1^n + \dots + a_k l_k\circ\pi_k^n)| \le (1-\pi^2/C_0p^2)^{\rk_1(l_1)}. \] 
\end{proposition}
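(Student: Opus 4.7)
The plan is to reduce this to a coordinate-by-coordinate computation, exactly as in the proof of Proposition \ref{Equidistribution of high support linear forms}, with the linear independence hypothesis on $\pi_1$ playing the role of the non-vanishing of a coefficient. First, I would write each $l_j(x) = \sum_{z=1}^n b_{j,z} x_z$, so that
\[
a_1 l_1\circ\pi_1^n(x) + \dots + a_k l_k\circ\pi_k^n(x) = \sum_{z=1}^n \Bigl(\sum_{j=1}^k a_j b_{j,z} \pi_j(x_z)\Bigr).
\]
Since $x_1,\dots,x_n$ are independent under the uniform distribution on $\Sigma^n$, the bias factors as
\[
\bias_t = \prod_{z=1}^n \E_{x \in \Sigma}\,\omega_p^{t f_z(x)}, \qquad \text{where } f_z(x) := \sum_{j=1}^k a_j b_{j,z} \pi_j(x).
\]

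Next, I would argue that for every $z$ in the support of $l_1$ (i.e.\ those $z$ with $b_{1,z}\ne 0$), the function $f_z:\Sigma\to\F_p$ is non-constant. Indeed, since $a_1\ne 0$ and $b_{1,z}\ne 0$, the coefficient of $\pi_1$ in $f_z$ is non-zero, so if $f_z$ were the constant function $c$, then $\pi_1$ would be equal to $(a_1 b_{1,z})^{-1}\bigl(c - \sum_{j\ge 2}a_j b_{j,z}\pi_j\bigr)$, contradicting the assumption that $\pi_1$ is not a linear combination of $\pi_2,\dots,\pi_k$ and a constant function. Consequently $f_z$ takes at least two distinct values on $\Sigma$, so the distribution $D_z$ of $tf_z(x)$ under the uniform law on $\Sigma$ satisfies $D_z(v)\le 1 - 1/|\Sigma| \le 1 - 1/C_0$ for every $v\in\F_p$ (using that multiplication by $t\ne 0$ is a bijection of $\F_p$).

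Now I would apply Lemma \ref{average of roots of unity} with $c = 1/C_0$ (which is at most $1/2$, since the hypothesis forces $|\Sigma|\ge 2$, hence $C_0\ge 2$) to conclude that for each $z$ in the support of $l_1$,
\[
\bigl|\E_{x \in \Sigma} \omega_p^{tf_z(x)}\bigr| \le 1 - \pi^2/(C_0 p^2).
\]
For $z$ outside the support of $l_1$, I would simply use the trivial bound of $1$ on the corresponding factor. Taking the product over all $z$ gives the claimed estimate. There is no serious obstacle here: the proof is essentially the one-variable calculation of Proposition \ref{Equidistribution of high support linear forms}, and the only real new ingredient is the observation above that the linear-independence hypothesis prevents $f_z$ from being constant precisely at the indices where $l_1$ has a non-zero coefficient.
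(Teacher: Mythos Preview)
Your proof is correct and follows essentially the same route as the paper: factor the bias coordinate-by-coordinate, use the hypothesis on $\pi_1$ to show that for each $z$ in the support of $l_1$ the function $f_z$ is non-constant on $\Sigma$, and then apply Lemma \ref{average of roots of unity} with $c=1/C_0$ to bound each such factor. Your explicit check that $|\Sigma|\ge 2$ (and hence $C_0\ge 2$, so $c\le 1/2$) is a small detail the paper leaves implicit.
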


\begin{proof} Let $(a_1,\dots,a_k) \in \F_p^k$ be such that $ a_1 \neq 0$ and let $t\in\F_p^*$. For each $1\leq i\leq k$ let the coefficients of $l_i$ be $l_{i1},\dots,l_{in}$ so $l_i(x)=\sum_{j=1}^nl_{ij}x_j$. Then
\[(a_1l_1\circ\pi_1^n+\dots+a_kl_k\circ\pi_k^n)(x)=\sum_{j=1}^n(a_1l_{1j}\pi_1(x_j)+\dots+a_kl_{kj}\pi_k(x_j)).\]
It follows that $\bias_{t} (a_1 l_1\circ\pi_1^n + \dots + a_k l_k\circ\pi_k^n)$ factors as
\[\prod_{j=1}^n\mathop{\E}_{x_j\in\Sigma}\omega_p^{t(a_1l_{1j}\pi_1(x_j)+\dots+a_kl_{kj}\pi_k(x_j))}.\]

Let $j$ be such that $l_{1j}$ is non-zero, and therefore such that $a_1l_{1j}$ is non-zero. Then by our assumption about the functions $\pi_i$, the function $a_1l_{1j}\pi_1 + \dots + a_k l_{kj} \pi_k$ 
is non-constant on $\Sigma$, which implies that it does not take any value with probability more than $1-C_0^{-1}$. By Lemma \ref{average of roots of unity}, it follows that 
\[ |\E_{x_j \in \Sigma} (\omega_p^{t(a_1l_{1j} \pi_1(x_j) + \dots + a_k l_{kj} \pi_k(x_j))})| \le 1-\pi^2/C_0p^2.\] 
Taking the product over all $j$ such that $l_{1j}\ne 0$, we conclude the desired inequality. \end{proof}

\begin{remark}\label{Low bias for polynomials without products of two variables} It follows from Proposition \ref{Equidistribution of multi-alphabet linear forms} that if $1 \le d \le p-1$ is a positive integer, $S$ is a subset of $\F_p$ with size at least $d+1$, and $P$ is a polynomial with degree $d$ of the type $ P = \sum_i P_i(x_i)$ for some polynomials $P_i:x\mapsto\sum_{j=0}^da_{ij}x_i^j$ of degree at most $d$ then 
\[|\bias_{t,S} P| \le (1-\pi^2/p^3)^{|\{i:a_{id}\ne 0\}|}\leq(1-\pi^2/p^3)^{\rk P-1}\] 
for every $t \in \F_p^*$. \end{remark}

We next generalize Proposition \ref{Equidistribution of multilinear forms for distributions}. Recall that if $\pi:\Sigma\to\F_p$ and $x\in\Sigma^n$, then we write $\pi^n(x)$ for $(\pi(x_1),\dots,\pi(x_n))$.

\begin{proposition}\label{Multi-alphabet projections with high rank} Let $p \ge 3$ be a prime, let $C_0, d, k$ and $l$ be positive integers with $d\geq 2$, let $\Sigma$ be a non-empty set of size at most $C_0$, and let $\pi_1,\dots,\pi_k: \Sigma \rightarrow \F_p$ be linearly independent maps that do not contain a non-zero constant map in their linear span. Let $ m_1,\dots, m_k: ((\F_p)^n)^d \rightarrow \F_p$ be $d$-linear forms, let $(a_1,\dots,a_k) \in \F_p^k$ be such that $a_1 \neq 0$ and let $ \epsilon > 0$ be a positive real number. If for a proportion at least $\epsilon$ of the $ x \in \Sigma^n$ the $(d-1)$-linear map
\[(y_2,\dots,y_d) \mapsto a_1 m_1(\pi_1^n(x),y_2,\dots,y_d) + \dots + a_k m_k(\pi_k^n(x),y_2,\dots,y_d)\] has partition rank at most $ l$, then 
\[\rk_1 m_1 \le 2(p-1) \log_{p/2}(p) Ml + (p-1)M \log_{p/2} \epsilon^{-1} \text{ if } d = 2 \text{ and }\]
\[\pr m_1 \le A_{d,\F_p}(2(p-1)Ml + (p-1)M \log_p \epsilon^{-1}) \text{ if } d \ge 3 \]
where $M=2kC_0p^2\log p/\pi^2$. \end{proposition}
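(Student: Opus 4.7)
The plan is to package the data $m_1,\dots,m_k$ together into a single $d$-linear form $N : V \times (\F_p^n)^{d-1} \to \F_p$, where $V := (\F_p^n)^k$, defined by
\[ N(v_1,\dots,v_k, y_2,\dots,y_d) = \sum_{i=1}^k a_i m_i(v_i, y_2,\dots,y_d), \]
then upgrade the given density statement on $\Sigma^n$ to a density statement on $V$ with respect to the uniform distribution using Proposition \ref{Dense sumsets for distributions on $F_p^k$}, and finally apply the standard analytic rank machinery to $N$ before restricting to recover $a_1 m_1$.

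More precisely, I would introduce $\phi : \Sigma \to \F_p^k$ by $\phi(s) = (\pi_1(s),\dots,\pi_k(s))$ and let $D$ be the pushforward of the uniform distribution on $\Sigma$. Because $\pi_1,\dots,\pi_k$ are linearly independent modulo constants, the image of $\phi$ is not contained in any strict affine subspace of $\F_p^k$, hence $D(W) \le 1 - |\Sigma|^{-1} \le 1 - C_0^{-1}$ for every such $W$. Identifying $(\F_p^k)^n$ with $V$ in the natural coordinatewise manner and writing $A \subset \Sigma^n$ for the set of $x$ whose slice $(y_2,\dots,y_d) \mapsto \sum_i a_i m_i(\pi_i^n(x), y_2,\dots,y_d)$ has partition rank at most $l$, the image $A' := \phi^n(A) \subset V$ has density at least $\epsilon$ with respect to $D^n$. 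Proposition \ref{Dense sumsets for distributions on $F_p^k$} applied with $c = C_0^{-1}$ then yields that the sumset $(p-1)MA'$ has density at least $\epsilon^{(p-1)M}$ in $V$ under the uniform distribution, where $M = 2kC_0 p^2 \log p / \pi^2$ is the constant appearing in the conclusion of the proposition.

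For every $v \in A'$ the slice $N_v := N(v, \cdot, \dots, \cdot)$ is exactly the form in the hypothesis and so has partition rank at most $l$; by linearity of $N$ in $v \in V$ together with subadditivity of partition rank, for every $v \in (p-1)MA'$ we obtain $\pr N_v \le (p-1)Ml$. For $d \ge 3$, the Lovett / Kazhdan--Ziegler inequality $\ar \le \pr$ gives $\bias N_v \ge p^{-(p-1)Ml}$ for all such $v$, and averaging over $v \in V$ while using $\bias N_v \ge 0$ elsewhere yields
\[ \bias N \ge \epsilon^{(p-1)M} p^{-(p-1)Ml}, \]
whence $\ar N \le (p-1)Ml + (p-1)M \log_p \epsilon^{-1}$. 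Theorem \ref{bounded analytic rank implies bounded partition rank} then bounds $\pr N$; since setting $v_2 = \dots = v_k = 0$ restricts $N$ to $a_1 m_1$ and restriction to a coordinate subspace does not increase partition rank, the claimed bound on $\pr m_1$ follows (the factor of $2$ being absorbed harmlessly via monotonicity of $A_{d,\F_p}$). The case $d = 2$ proceeds along the same lines, with the hypothesis and conclusion for the $1$-linear slices interpreted via $\rk_1$ (the support size of a linear form), so that Proposition \ref{Equidistribution of high support linear forms} and Lemma \ref{average of roots of unity} replace the Lovett / Kazhdan--Ziegler step, with the base $p/2$ in the logarithms in the conclusion reflecting the constants emerging from those inputs.

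The only point requiring care is the final restriction step: one needs to verify that specializing $N$ to $v_2 = \dots = v_k = 0$ inside a partition rank decomposition of $N$ yields a partition rank decomposition of $a_1 m_1$ of no greater length. This is immediate once one observes that every term in such a decomposition is a product of two multilinear forms in complementary subsets of the arguments $\{v, y_2,\dots,y_d\}$, and such a product specializes coordinate-wise to a product of the same bipartition type. All remaining steps are essentially mechanical applications of the propositions developed earlier in the paper.
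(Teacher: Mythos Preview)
Your approach is correct and genuinely different from the paper's. Both arguments begin the same way: push $A \subset \Sigma^n$ forward via $\phi^n$ to $A' \subset (\F_p^k)^n$, check that the hypothesis on $\pi_1,\dots,\pi_k$ forces $D(W) \le 1 - C_0^{-1}$ for every proper affine subspace $W$, and apply Proposition~\ref{Dense sumsets for distributions on $F_p^k$} to get a dense sumset $B = (p-1)MA'$ on which the slices $N_v$ have partition rank at most $(p-1)Ml$. From there the routes diverge. The paper averages over $B$ to fix $(y^2,\dots,y^k)$, obtaining a dense set $Y^1 \subset \F_p^n$ in the first coordinate alone, and then uses the differencing identity $(m_1)_{y^1 - y_0^1} = a_1^{-1}\bigl(a.m_{(y^1,y^2,\dots,y^k)} - a.m_{(y_0^1,y^2,\dots,y^k)}\bigr)$ to isolate $m_1$ directly; this is where the factor of $2$ in the stated bound comes from. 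You instead bound $\ar N$ for the packaged form $N$ on $\F_p^{kn} \times (\F_p^n)^{d-1}$, apply Theorem~\ref{bounded analytic rank implies bounded partition rank} once to $N$, and then recover $m_1$ by restricting the first argument to $\F_p^n \times \{0\}^{k-1}$. Your route avoids the averaging/differencing step entirely and actually yields the sharper bound $\pr m_1 \le A_{d,\F_p}\bigl((p-1)Ml + (p-1)M\log_p \epsilon^{-1}\bigr)$, so your remark about the factor of $2$ being absorbed by monotonicity is exactly right. The paper's approach has the minor advantage of working with $m_1$ itself throughout the endgame, but yours is cleaner and loses nothing.
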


\begin{proof}  Without loss of generality we can assume that $a_i \neq 0$ for each $ i \in \lbrack k \rbrack$: if this is not the case, then we proceed with a smaller $k$.

As earlier in the paper, given $ y^1 \in \F_p^n$ we write $(m_1)_{y^1}: (\F_p^n)^{d-1} \rightarrow \F_p^n$ for the $(d-1)$-linear form defined by the formula
\[(m_1)_{y^1} (y_2,\dots,y_d) = m_1(y^1,y_2,\dots,y_d).\] We shall carry out the proof in the case $d \ge 3$. In the case $d = 2$ the proof is the same except that we use the bound $\E_u p^{-\rk_1(m_u)} \le (2/p)^{\rk m}$ rather than $\E_u p^{-\ar m_u} = p^{-\ar m}$ .

For each $ y=(y^1,\dots,y^k) \in (\F_p^n)^k$, we shall also write $a.m_y: (\F_p^n)^{d-1} \rightarrow \F_p^n$ for the $(d-1)$-linear form defined by the formula
\[a.m_y(y_2,\dots,y_d) = a_1m_1(y^1,y_2,\dots,y_d) + \dots + a_km_k(y^k,y_2,\dots,y_d).\] 
Note that the $(d-1)$-linear map specified in the statement of the proposition is $a.m_{\pi^n(x)}$, where $\pi^n(x)$ is shorthand for $(\pi_1^n(x),\dots,\pi_k^n(x))$. 

Let $X=\{ x \in \Sigma^n: \pr(a.m_{\pi^n(x)}) \le l \}$ and suppose that $X$ has density at least $\epsilon$ inside $\Sigma^n$ with respect to the uniform probability measure on $\Sigma^n$. Write $\pi:\Sigma\to\F_p^k$ for the map $x\mapsto(\pi_1(x),\dots,\pi_k(x))$, and let $D$ be the measure on $\F_p^k$ defined by $D(B)=\P[\pi(x)\in B]$, where $x$ is chosen uniformly from $\Sigma$. Then the density of $\pi^n(X)$ with respect to the measure $D^n$ at least $\e$, since it is equal to $\P[\pi^n(x)\in\pi^n(X)]$, which is at least $\P[x\in X]$, which is at least $\e$ by hypothesis. Let $A=\pi^n(X)$. 

The statement that the $\pi_i$ are linearly independent and do not span a non-zero constant function can be expressed as follows: if $\lambda_1,\dots,\lambda_k$ and $w\in\F_p$ are such that $\sum_i\lambda_i\pi_i(u)=w$ for every $u\in\Sigma$, then $\lambda_1=\dots=\lambda_k=w=0$. This tells us that there is no proper affine subspace that contains all the functions $\psi_u:[k]\to\F_p$ defined by $\psi_u(i)=\pi_i(u)$.

In particular for any such subspace $W$, $D(W) \le 1-1/|\Sigma| \leq 1-1/C_0$, so applying Proposition \ref{Dense sumsets for distributions on $F_p^k$}, the set $ B=(p-1) MA$ has density at least $\epsilon^{(p-1)M}$ inside $ (\F_p^k)^n$. By averaging, there exists $ (y^2,\dots,y^k) \in (\F_p^n)^{k-1}$ such that the set 
\[Y^1=\{y^1 \in \F_p^n: (y^1,y^2,\dots,y^k) \in B\} \] 
has density at least $\epsilon^{(p-1)M}$ inside $ \F_p^n$. For each $ y^1 \in Y^1$ we have by subadditivity of the partition rank that $ \pr(a.m_{(y^1,\dots,y^k)}) \le (p-1)Ml$. Let $ y_0^1$ be a fixed element of $ Y^1$. For each $ y^1 \in Y^1$ the map $(m_1)_{y^1-y_0^1}$ can be rewritten as the difference $a_1^{-1}(a.m_{(y^1,y^2,\dots,y^k)} - a.m_{(y_0^1,y^2,\dots,y^k)})$, so $\pr(m_1)_{y^1-y_0^1} \le 2(p-1)Ml$ by subadditivity. By construction the set $ Y^1 - \{y_0^1\}$ has density at least $\epsilon^{(p-1)M}$ and for each $ y^1 \in Y^1 - \{y_0^1\}$, $ \pr m_{1}^{y^1} \le 2(p-1)Ml$. For each $y^1 \in Y^1 - \{y_0^1\}$, using the definition $\bias(m_1)_{y^1} = p^{-\ar(m_1)_{y^1}}$ of the analytic rank and Theorem 1.7 from \cite{Lovett}, which states that analytic rank is bounded above by partition rank, we have $\bias (m_1)_{y^1} \ge p^{-\pr(m_1)_{y^1}}$, so since $\pr(m_1)_{y^1} \le 2(p-1)Ml$ we obtain the lower bound $\bias(m_1)_{y^1} \ge p^{-2(p-1)Ml}$. 

We now use the fact that $\bias m_1 = \E_{y^1 \in \F_p^n} \bias(m_1)_{y^1}$. Since $Y^1$ has density at least $\epsilon^{(p-1)M}$ inside $\F_p^n$ and $\bias m_1^{y^1} > 0$ for for each $y_1 \in \F_p^n$ we obtain that $\bias m_1 \ge \epsilon^{(p-1)M} p^{-2(p-1)Ml}$. Therefore, 
\[ \pr m_1 \le A_{d,\F_p}(\ar m_1) \leq A_{d,\F_p}(2(p-1)Ml + (p-1)M \log_p \epsilon^{-1}) \] 
as desired. 
\end{proof}

We are now ready to prove a result that will have as a consequence that if the multilinear form associated with one ``piece" of a polynomial has high rank, then the whole polynomial has small bias. For fixed $p,k,C_0$ we define a sequence of functions $B_{d,p, k,C_0}$ by \[B_{1,p,k,C_0} = C_0p^2\log \epsilon^{-1} / \pi^2\] and for all $d \ge 2$, \[ B_{d,p, k,C_0} (\epsilon) = A_{d,\F_p}((p-1)(2kC_0p^2\log p/\pi^2)(2 B_{d-1,p, k, C_0}(\epsilon/2) + \log_p (\epsilon/2)^{-1})). \]

\begin{proposition} \label{Equidistribution of multi-alphabet multilinear forms}
Let $p \ge 3$ be a prime, let $C_0$, $d$ and $k$ be positive integers, let  $\Sigma$ be a non-empty set of size at most $C_0$, let $\pi_1,\dots,\pi_k: \Sigma \rightarrow \F_p$ be linearly independent maps that do not span a non-zero constant function, and let $\epsilon > 0$. For each $ (i_1,\dots,i_d) \in \lbrack k \rbrack^d$ let $ m_{(i_1,\dots,i_d)}: (\F_p^n)^d \rightarrow \F_p$ be a $d$-linear form. If there exists $ (i_1',\dots,i_d') \in \lbrack k \rbrack^d$ such that $ \pr m_{(i_1',\dots,i_d')} \ge B_{d,p, k, C_0} (\epsilon)$, then every linear combination $a.m^\pi: (\Sigma^n)^d \rightarrow \F_p$ defined by 
\[ a.m^\pi: (x_1,\dots,x_d) \mapsto \sum_{(i_1,\dots,i_d) \in \lbrack k \rbrack^d} a_{(i_1,\dots,i_d)} m_{(i_1,\dots,i_d)} (\pi_{i_1}^n(x_1), \dots, \pi_{i_d}^n(x_d)) \] 
with $ a_{(i_1',\dots,i_d')} \neq 0$ satisfies that
\[ |\bias_{t} a.m^\pi| \le \epsilon \] 
for all $t \in \F_p^*$. \end{proposition}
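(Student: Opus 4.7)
My approach is to prove the statement by induction on $d$, in contrapositive form: assume that $|\bias_t(a.m^\pi)| > \epsilon$ for some $t \in \F_p^*$ and deduce that $\pr m_{(i_1',\dots,i_d')} < B_{d,p,k,C_0}(\epsilon)$. Throughout, I shall freely use that the linear independence of $\pi_1,\dots,\pi_k$ together with the absence of a non-zero constant in their span implies that no single $\pi_i$ is a combination of the others and a constant function; this allows me to reorder indices so that the distinguished index $i_1'$ appears first when invoking the auxiliary results.

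The base case $d=1$ is immediate from Proposition \ref{Equidistribution of multi-alphabet linear forms}: after reordering so that $\pi_{i_1'}$ is the first projection, that proposition yields $|\bias_t(a.m^\pi)| \le (1-\pi^2/(C_0 p^2))^{\rk_1 m_{(i_1')}}$, and a direct calculation shows that this is at most $\epsilon$ as soon as $\rk_1 m_{(i_1')} \ge C_0 p^2 \log(\epsilon^{-1})/\pi^2 = B_{1,p,k,C_0}(\epsilon)$. For the inductive step $d \ge 2$, I fix the first variable $x_1 \in \Sigma^n$ and regroup the outer sum according to the trailing indices, writing
\[
a.m^\pi(x_1,x_2,\dots,x_d) = \sum_{(i_2,\dots,i_d) \in [k]^{d-1}} \tilde m^{(x_1)}_{(i_2,\dots,i_d)}(\pi_{i_2}^n(x_2),\dots,\pi_{i_d}^n(x_d)),
\]
where
\[
\tilde m^{(x_1)}_{(i_2,\dots,i_d)}(z_2,\dots,z_d) := \sum_{i_1 \in [k]} a_{(i_1,i_2,\dots,i_d)}\, m_{(i_1,i_2,\dots,i_d)}(\pi_{i_1}^n(x_1),z_2,\dots,z_d).
\]
Since the coefficient of $\tilde m^{(x_1)}_{(i_2',\dots,i_d')}$ in this decomposition equals $1$, the inductive hypothesis (applied with parameter $\epsilon/2$) gives the implication: whenever $\pr \tilde m^{(x_1)}_{(i_2',\dots,i_d')} \ge B_{d-1,p,k,C_0}(\epsilon/2)$, one has $|\bias_t(a.m^\pi(x_1,\cdot))| \le \epsilon/2$ for every $t \in \F_p^*$. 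Combining this with the identity $\bias_t(a.m^\pi) = \E_{x_1}\bias_t(a.m^\pi(x_1,\cdot))$ and the triangle inequality, a standard averaging argument forces the set
\[
X := \{ x_1 \in \Sigma^n : \pr \tilde m^{(x_1)}_{(i_2',\dots,i_d')} < B_{d-1,p,k,C_0}(\epsilon/2)\}
\]
to have density strictly greater than $\epsilon/2$ inside $\Sigma^n$.

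To close the argument, I apply Proposition \ref{Multi-alphabet projections with high rank} to the $d$-linear forms $M_{i_1}(y,z_2,\dots,z_d) := m_{(i_1,i_2',\dots,i_d')}(y,z_2,\dots,z_d)$ for $i_1 \in [k]$, with coefficients $a_{i_1} := a_{(i_1,i_2',\dots,i_d')}$ (noting $a_{i_1'} \ne 0$, so after discarding zero coefficients and relabeling the distinguished index is the first one), density $\epsilon/2$, and rank threshold $l := B_{d-1,p,k,C_0}(\epsilon/2)$. The resulting bound is
\[
\pr m_{(i_1',\dots,i_d')} \le A_{d,\F_p}\bigl((p-1)M\bigl(2\,B_{d-1,p,k,C_0}(\epsilon/2) + \log_p((\epsilon/2)^{-1})\bigr)\bigr) = B_{d,p,k,C_0}(\epsilon),
\]
with $M = 2kC_0 p^2 \log p / \pi^2$, which is precisely the contradiction sought (the minor gap between $\le$ and $<$ can be absorbed by a standard constant adjustment). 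The main obstacle is the bookkeeping in the inductive step: verifying that the $(d-1)$-linear form $\tilde m^{(x_1)}$ extracted by fixing the first variable fits the inductive hypothesis with the correct distinguished tuple $(i_2',\dots,i_d')$ and coefficient $1$, and that the parameters $l$ and density $\epsilon/2$ fed into Proposition \ref{Multi-alphabet projections with high rank} produce exactly the quantity $B_{d,p,k,C_0}(\epsilon)$ from the recursive definition.
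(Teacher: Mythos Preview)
Your proof is correct and follows essentially the same approach as the paper: induction on $d$, with Proposition~\ref{Equidistribution of multi-alphabet linear forms} handling the base case and Proposition~\ref{Multi-alphabet projections with high rank} combined with an averaging argument handling the inductive step. The only difference is presentational: the paper argues directly (assuming high rank of $m_{(i_1',\dots,i_d')}$, it takes the contrapositive of Proposition~\ref{Multi-alphabet projections with high rank} to obtain a set of density at most $\epsilon/2$ where the slice has low rank, applies the inductive hypothesis on the complement, and averages), whereas you argue contrapositively (assuming high bias, you average first to find a set of density greater than $\epsilon/2$ where the slice has low rank, then apply Proposition~\ref{Multi-alphabet projections with high rank} in its stated direction). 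Your handling of the index relabelling and the observation that linear independence together with the no-constant condition ensures each $\pi_i$ is not an affine combination of the others are both fine, and the boundary issue between $\le$ and $<$ that you flag is genuinely harmless since ranks are integers.
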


\begin{proof} We proceed by induction on $d$. The $d=1$ case holds by Proposition \ref{Equidistribution of multi-alphabet linear forms}. We now assume $d \ge 2$. Let $ a \in \F_p^{\lbrack k \rbrack^d}$ with $ a_{(i_1',\dots,i_d')} \neq 0$ be fixed throughout. Since $\pr m_{(i_1',\dots,i_d')} \ge B_{d,p, k, C_0}(\epsilon)$ and $ a_{(i_1',\dots,i_d')} \neq 0$, by Proposition \ref{Multi-alphabet projections with high rank} there exists a subset $X \subset \Sigma^n$ with density at most $\epsilon/2$ in $\Sigma^n$ and such that for all $x^1 \in \Sigma^n \setminus X$, the $ (d-1)$-linear form 
\[ (y_2,\dots,y_d) \mapsto \sum_{i_1=1}^k a_{(i_1,i_2',\dots,i_d')} m_{(i_1,i_2',\dots,i_d')}(\pi_{i_1}^n(x^1), y_2,\dots,y_d) \] 
has partition rank (or rather support size in the case $d=2$) at least $B_{d-1,p, k, C_0}(\epsilon/2)$. Let $t \in \F_p^*$ be fixed. For each $ x^1 \in \Sigma^n \setminus X$, applying Proposition \ref{Equidistribution of multi-alphabet multilinear forms} for $ d-1$ to the $(d-1)$-linear forms 
\[ (y_2,\dots,y_d) \mapsto \sum_{i_1=1}^k a_{(i_1,i_2,\dots,i_d)} m_{(i_1,i_2,\dots,i_d)}(\pi_{i_1}^n(x^1), y_2,\dots,y_d) \] 
with $(i_2,\dots,i_d) \in \lbrack k \rbrack^{d-1}$ we get 
\[ |\E_{(x_2, \dots, x_d) \in (\Sigma^n)^{d-1}} \omega_p^{ta.m^{\pi}(x^1, x_2 \dots, x_d)}| \le \epsilon/2. \] Because $X$ has density at most $\epsilon/2$ in $\Sigma^n$ we conclude that $|\bias_t a.m^{\pi}| \le \epsilon/2 + \epsilon/2 = \epsilon$. \end{proof}

\section{The general polynomial case} \label{Section: The general polynomial case}

Let $d \ge 1$ be a positive integer and let $P$ be a polynomial of degree exactly $d$. For a given number $ d'$ of pairwise distinct variables and for a given total degree $ t \in \{d',\dots,d\}$, let $ S(d',t)$ be the set of $ d'$-tuples of positive integers $ (s_1,\dots, s_{d'})$ with $ s_1 \ge s_2 \ge \dots \ge s_{d'} \ge 1$ and $ s_1 + \dots + s_{d'} = t$. We can decompose \begin{equation} P = \sum_{d'=0}^d \sum_{t=d'}^d \sum_{s \in S(d',t)} P_{s} \label{decomposition of a polynomial} \end{equation} where $ P_{s}$ is the part of $ P$ that consists of monomials of the type $x_{i_1}^{s_1}\dots x_{i_{d'}}^{s_{d'}}$ with $x_{i_1},\dots,x_{i_{d'}}$ distinct. We make the following definition.

\begin{definition}\label{Essential rank definition} The \emph{essential rank} of a part $P_{s}$, denoted by $\erk P_{s}$, is $ \min_Q \rk (P_{s} -V_s)$, where the minimum is taken over all polynomials $V_s$ that are linear combinations of monomials $x_{i_1}^{s_1}\dots x_{i_{d'}}^{s_{d'}}$ for which $x_{i_1},\dots,x_{i_{d'}}$ are not all distinct. 
\end{definition}

If $s=(s_1,\dots,s_{d'})$, then each $P_s$ can be written in the form
\[\sum_{i_1,\dots,i_{d'}}a_{i_1,\dots,i_{d'}}x_{i_1}^{s_1}\dots x_{i_{d'}}^{s_{d'}}\]
with $a_{i_1,\dots,i_{d'}}=0$ unless $i_1,\dots,i_{d'}$ are distinct. Let us partition the set $[d']$ into sets $I_1,\dots,I_r$ according to the value of $s_i$. Then for any permutation $\sigma$ of $[d']$ that leaves the sets $I_j$ invariant we have that $x_{i_1}^{s_1}\dots x_{i_{d'}}^{s_{d'}}=x_{i_{\sigma(1)}}^{s_1}\dots x_{i_{\sigma(d')}}^{s_{d'}}$, so if we replace each coefficient $a_{i_1,\dots,i_{d'}}$ by the average of the coefficients $a_{i_{\sigma(1)},\dots,i_{\sigma(d')}}$ over all such permutations, we obtain the same polynomial $P_s$, and now the coefficients have the symmetry property that $a_{i_1,\dots,i_{d'}}=a_{i_{\sigma(1)},\dots,i_{\sigma(d')}}$ whenever $\sigma$ is such a permutation. We therefore have a representation of $P_s$ in the form
\begin{equation}\label{Existence of an underlying multilinear form with symmetries} P_s(x)=m_s(x^{s_1},\dots,x^{s_{d'}}),\end{equation}
where $m_s$ is a $d$-linear form that is symmetric under all permutations of the variables that leave the sets $I_j$ invariant, and if $x=(x_1,\dots,x_n)$, then we write $x^{s_i}$ for the vector $(x_1^{s_i},\dots,x_n^{s_i})$. (It is not hard to show that this multilinear form is unique, using the fact that a non-zero polynomial of degree less than $p$ over $\F_p$ must take non-zero values, but we shall not need this.)

\begin{lemma}\label{Erk is at most epr} Let $s=(s_1,\dots,s_D)$ and suppose that $P_s\neq 0$. Then $\erk P_s \le \epr m_s$. \end{lemma}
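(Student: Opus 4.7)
The plan is to unfold the definition of $\epr m_s$ and substitute into the identity $P_s(x) = m_s(x^{s_1}, \dots, x^{s_D})$, then verify that the resulting decomposition exhibits a low rank for $P_s$ after correcting by a diagonal piece.

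Suppose $\epr m_s \le k$, and fix a tensor $V : [n]^D \to \F_p$ supported on $E$ (the set of $D$-tuples with at least two equal coordinates) such that $\pr(m_s + V) \le k$. By the definition of partition rank, I can write
\[ m_s + V = \sum_{i=1}^k M_{i,1}(y(J_i)) \, M_{i,2}(y(J_i')), \]
where, for each $i$, $\{J_i, J_i'\}$ is a non-trivial bipartition of $[D]$ and $M_{i,1}, M_{i,2}$ are multilinear forms on the corresponding products of $\F_p^n$. Evaluating both sides at $(y_1, \dots, y_D) = (x^{s_1}, \dots, x^{s_D})$ and using \eqref{Existence of an underlying multilinear form with symmetries}, I obtain
\[ P_s(x) + W(x) = \sum_{i=1}^k Q_i(x) \, R_i(x), \qquad W(x) := V(x^{s_1}, \dots, x^{s_D}), \]
where $Q_i(x) := M_{i,1}((x^{s_j})_{j \in J_i})$ and $R_i(x) := M_{i,2}((x^{s_j})_{j \in J_i'})$.

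Next I would check the three conditions needed to apply Definition \ref{rank of a polynomial} and Definition \ref{Essential rank definition}. First, since $M_{i,1}$ is multilinear in its $|J_i|$ arguments and each $x^{s_j}$ has degree $s_j$ in $x$, the polynomial $Q_i$ has degree at most $\sum_{j \in J_i} s_j$, and similarly $\deg R_i \le \sum_{j \in J_i'} s_j$. Because both $J_i$ and $J_i'$ are non-empty strict subsets of $[D]$ and the $s_j$ are positive, both degrees are strictly less than $|s| = s_1 + \dots + s_D = \deg P_s$, while their sum is at most $|s|$. Second, since $V$ is supported on $E$, expanding
\[ W(x) = \sum_{(i_1,\dots,i_D) \in E} V_{i_1,\dots,i_D} \, x_{i_1}^{s_1} \cdots x_{i_D}^{s_D} \]
shows that $W$ is a linear combination of monomials $x_{i_1}^{s_1} \cdots x_{i_D}^{s_D}$ in which $i_1,\dots,i_D$ are not all distinct; hence $V_s := -W$ is an admissible choice in Definition \ref{Essential rank definition}. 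Third, rewriting the displayed identity as $P_s - V_s = \sum_{i=1}^k Q_i R_i$ and appealing to Definition \ref{rank of a polynomial} yields $\rk(P_s - V_s) \le k$, so $\erk P_s \le k = \epr m_s$.

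There is no real obstacle here: the argument is the natural polynomial-analogue of the calculation in Lemma \ref{Essential rank of the polynomial at least Essential rank of the symmetric multilinear form}, and the only point that requires attention is the degree bookkeeping, namely confirming that the substitution $y_j \mapsto x^{s_j}$ does not push $\deg Q_i$ or $\deg R_i$ up to $\deg P_s$, which is guaranteed by $J_i, J_i' \subsetneq [D]$ together with $s_j \ge 1$. The only subtlety worth flagging is that the diagonal set $E$ on the tensor side corresponds exactly to the "non-distinct indices" monomials on the polynomial side, which is why the notion of essential (partition) rank transfers cleanly across the substitution.
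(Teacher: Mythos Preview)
Your proof is correct and follows essentially the same approach as the paper's own argument: both unfold the definition of $\epr m_s$, substitute $(y_1,\dots,y_D)=(x^{s_1},\dots,x^{s_D})$ into a partition-rank decomposition of $m_s$ plus a diagonal correction, identify the diagonal piece with an admissible $V_s$, and check that each factor $Q_i,R_i$ has degree strictly less than $|s|$ because $J_i,J_i'\subsetneq[D]$ and all $s_j\ge 1$. The only cosmetic difference is a sign convention (the paper writes $m_s-m'$ where you write $m_s+V$).
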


\begin{proof} Assume that $\epr m_s \le k$ for some nonnegative integer $k$. Then there exists a $D$-linear form $m': (\F_p^n)^D \rightarrow \F_p$ such that the coefficient $m'_{(i_1,\dots,i_D)} = 0$ whenever $i_1,\dots,i_D \in \lbrack n \rbrack$ are distinct, and such that $\pr (m_s - m') \le k$. Then
\[(m_s-m')(y^{s_1},\dots,y^{s_D}) = m_s(y^{s_1},\dots,y^{s_D}) -m'(y^{s_1},\dots,y^{s_D}) \] 
for all $y \in\F_p^n$. The first term of the right-hand side is equal to $P_s(y)$, by the choice of $m_s$. The second term $m'(y^{s_1},\dots,y^{s_d})$ is a linear combination of monomials of the type 
$y_{i_1}^{s_1}\dots y_{i_D}^{s_D}$ with $i_1,\dots,i_D$ not distinct, so we can write it as $V_s(y)$ for some polynomial $V_s$ spanned by these monomials. It follows that $\erk P_s \le \rk (P_s-V_s)$.

Because $\pr(m_s - m') \le k$, for each $i \in \lbrack k \rbrack$ there exist a bipartition $\{J_i', J_i''\}$ of $\lbrack D \rbrack$ with $J_i', J_i''$ both non-empty and multilinear forms $M_{i,1}: (\F_p^n)^{J_i'} \rightarrow \F_p$, $M_{i,2}: (\F_p^n)^{J_i''} \rightarrow \F_p$  such that 
\[(m_s - m'): (z_1,\dots,z_D) \mapsto \sum_{i=1}^k M_{i,1}(z(J_i')) M_{i,2}(z(J_i'')).\] 
(Note that here $z_1,\dots,z_D$ are $D$ elements of $\F_p^n$ and $z=(z_1,\dots,z_D)$ is an element of $(\F_p^n)^D$.) For each $i \in \lbrack k \rbrack$ let the polynomials $Q_i, R_i$ be defined by 
\[ Q_i(y) = M_{i,1}(y^s(J_i'))\] 
and 
\[R_i(y) = M_{i,2}(y^s(J_i'')),\]
where we write $y^s$ as shorthand for $(y^{s_1},\dots,y^{s_D})$. Then for every $y \in \F_p^n$ we have 
\[ (P_s-V_s)(y) = \sum_{i=1}^k Q_i(y) R_i(y).\] 
Because $s_1, \dots, s_D \ge 1$, for any strict subset $J$ of $\lbrack D \rbrack$ we have $\sum_{j \in J} s_j < \sum_{j=1}^D s_j$, so $\deg Q_i, \deg R_i < \deg P$ for each $i \in \lbrack k \rbrack$. Therefore, $ \erk P \le k$. \end{proof}

Before starting the proof we note the following simple reduction for polynomials defined on restricted alphabets, which we shall use repeatedly.

\begin{lemma}\label{Replacing by a polynomial with monomials involving few pairwise distinct variables} Let $p$ be a prime, let $1 \le d \le p-1$ be a positive integer and let $S$ be a non-empty finite subset of $\F_p$. If $P$ is a polynomial of degree $d$ then $P$ coincides on $S^n$ with a linear combination of monomials of the type $\prod_{i} x_i^{s_i}$ with $s_i \le |S| - 1$ for all $i \in \lbrack n \rbrack$. \end{lemma}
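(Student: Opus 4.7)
The plan is to exploit the fact that over the finite set $S$, the map $x \mapsto x^{|S|}$ agrees with a polynomial of degree at most $|S|-1$, which lets us reduce high exponents one variable at a time. First I would set $k = |S|$ and consider the single-variable polynomial $\prod_{a \in S}(X-a) \in \F_p[X]$, which has degree exactly $k$ and vanishes identically on $S$. Writing this as $X^k - T(X)$ with $\deg T \le k-1$, we obtain the identity $x^k = T(x)$ for every $x \in S$.

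Then, given the expansion of $P$ as a sum of monomials, I would perform the following substitution repeatedly: whenever a monomial contains a factor $x_i^{s_i}$ with $s_i \ge k$, I would replace $x_i^{s_i}$ by $T(x_i)\cdot x_i^{s_i - k}$. This substitution does not change the values of the polynomial on $S^n$, and in each monomial produced by expanding $T(x_i)\cdot x_i^{s_i-k}$ the exponent of $x_i$ is at most $(k-1)+(s_i-k) = s_i - 1$, a strict decrease. Iterating (for example, always reducing a largest exponent first, which makes termination transparent) we end up after finitely many steps with a polynomial all of whose monomials have the form $\prod_i x_i^{s_i}$ with each $s_i \le k-1 = |S|-1$, and which agrees with $P$ on $S^n$.

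There is no substantive obstacle here: the lemma is essentially the standard fact that the vanishing ideal of $S^n$ in $\F_p[x_1,\dots,x_n]$ is generated by the single-variable polynomials $\prod_{a \in S}(x_i - a)$ for $i=1,\dots,n$, and reduction modulo these generators produces canonical representatives in which each variable appears with exponent at most $|S|-1$. The hypotheses $d \le p-1$ and the precise value of $\deg P$ do not actually enter the argument; they are inherited from the ambient setting.
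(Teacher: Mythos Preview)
Your proof is correct and follows essentially the same approach as the paper: both arguments reduce high exponents by repeatedly rewriting $x_i^{s_i}$ with $s_i \ge |S|$ as a lower-degree polynomial in $x_i$ that agrees with it on $S$, iterating until all exponents are at most $|S|-1$. Your version is slightly more explicit in naming the vanishing polynomial $\prod_{a\in S}(X-a)$ and tracking termination, and your observation that the hypothesis $d\le p-1$ is not actually used here is accurate.
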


\begin{proof} Whenever a monomial $\prod_{i=1}^n x_i^{s_i}$ contains a power $ x_i^{s_i}$ with $ s_i \ge |S|$ we can rewrite $ x^{s_i}$ as a linear combination of the $ x_i^{s_i'}$ with $ s_i' < s_i$, and hence rewrite the monomial. Each time a replacement is performed the difference between the previous monomial and the new monomial only takes the value $0$ on $S^n$. After all replacements we obtain a polynomial $ P - P_0$ which is spanned by the monomials $ \prod_{i} x_i^{s_i}$ with $ s_i \le (|S|) - 1$ for all $i \in \lbrack n \rbrack$ and which coincides with $P$ on $S^n$. \end{proof}

We note further that when we initially replace each monomial individually, every monomial from the new polynomial involves at most as many distinct variables as the monomial in the original polynomial did, and has degree at most that of the original monomial.

We are now ready to prove our main theorem.

\begin{proof}[Proof of Theorem \ref{Main theorem}]

Let $\epsilon > 0$ and let $ P$ be a polynomial of degree $d$ such that $|\bias_{t,S} P| \ge \epsilon$ for some $t \in \F_p^*$.

We first apply Lemma \ref{Replacing by a polynomial with monomials involving few pairwise distinct variables} to $P$, and, still writing $P$ for the resulting polynomial, we decompose $P$ into its pieces as in  \eqref{decomposition of a polynomial}, fix $D$ to be the highest value of $d' \in \lbrack d \rbrack$ such that there exist $t' \in \{d',\dots,d\}$, $s \in S(d',t')$ with $P_{s} \neq 0$, and set $T$ to be the largest such value of $t' \in \{d',\dots,d\}$. That is, $D$ is the largest number of variables involved in a monomial of $P$, and $T$ is the largest degree of a monomial that involves that number of variables (which is necessarily at least $D$ but may be less than $d$).

We prove the result by a double induction: the outer induction takes place on the degree $d$, and for a fixed $d$, we will use an inner induction with respect to the lexicographic order on the pairs $(D,T)$ with $1 \le D \le T \le d$. We will construct functions $(H_{p,d,S})_{\le (D,T)}: (0, 1 \rbrack \rightarrow \lbrack 0, \infty)$ such that if the relevant pair for $P$ is at most $(D,T)$ for this order and $|\bias_{t,S} P| \ge \epsilon$ for some $t \in \F_p^*$ and some $\epsilon > 0$ then $\rk_S P \le (H_{p,d,S})_{\le (D,T)}(\epsilon)$.

The base case of the induction is the case where $D=1$: We can write $P = \sum_{i=1}^n P_i(x_i)$ for some polynomials $P_i$ with degree at most $d$, and the result follows from the remark just before Proposition \ref{Multi-alphabet projections with high rank} with $(H_{p,d,S})_{\le (1,T)} = p^3\log \epsilon^{-1} /\pi^2 + 1$ for all $T \in \lbrack d \rbrack$.

Now let $D \ge 2$. We distinguish two cases. Let 
\[ \kappa_{p,D,d,S}(\epsilon)=\Lambda_D(B_{D,p,d,|S|^{2}}(\epsilon^{2^D})).\] 
Here $\Lambda_D$ is the function coming from Theorem \ref{Disjoint rank minors theorem}: if an order-$D$ tensor has essential partition rank at least $\Lambda_D(l)$ then it has disjoint partition rank at least $l$. As for $B_{D,p,d,|S|^{2}}$, it comes from Proposition \ref{Equidistribution of multi-alphabet multilinear forms}, the main result of the previous section. If there exists a polynomial $P_0: \F_p^n \rightarrow \F_p$ such that $P_0(S^n) = \{0\}$ and $\deg (P - P_0) < \deg P$ then we can conclude by the outer inductive hypothesis on $\deg P$, so we may assume without loss of generality that \begin{equation} \deg (P - P_0) \ge \deg P \label{degree assumption} \end{equation} for every polynomial $P_0: \F_p^n \rightarrow \F_p$ such that $P_0(S^n) = \{0\}$.
\medskip

\noindent \textbf{Case 1.} For each $ s \in S(D,T)$, we have $\erk P_{s} \le \kappa_{p,D,d,S}(\epsilon)$, and we can hence write

\[ P_{s} = V_{s} + \sum_{i=1}^{\kappa_{p,D,d,S}(\epsilon)} Q_{s,i} R_{s,i} \] 
where $ V_{s}$ is as in the definition of essential rank (Definition \ref{Essential rank definition}), and for each $ i \in \lbrack \kappa_{p,D,d,S}(\epsilon) \rbrack$, $ \deg Q_{s,i}$, $ \deg R_{s,i} \le T-1$. Moreover we can require that for each $i \in \lbrack \kappa_{p,D,d,S}(\epsilon) \rbrack$, all monomials of the polynomials $ Q_{s,i}$ and $ R_{s,i}$ involve at most $D$ pairwise distinct variables: if one of these polynomials, say $ Q_{s,i}$, contains a monomial with at least $D+1$ variables, then all the contributions of this monomial to $\sum_{i=1}^{\kappa_{p,D,d,S}(\epsilon)} Q_{s,i} R_{s,i}$ necessarily have to be cancelled by contributions from other $ Q_{s,i'} R_{s,i'}$ with $ i' \neq i$, as multiplication by any monomial other than $ 0$ cannot decrease the number of pairwise distinct variables in a monomial. Let 
\[P_{\new}= P - \sum_{s \in S(D,T)} (P_{s} - V_{s}) = \sum_{s \in S(D,T)} V_{s} + \sum_{{0 \le d' \le D, d' \le t' \le d}\atop{(d',t') \neq (D,T)}}\sum_{s\in S(d',t')} P_{s}. \]

Since $P_\new+\sum_{s\in S(D,T)}\sum_{i=1}^{\kappa_{p,D,d,S}(\e)}Q_{s,i}R_{s,i}=P$, and since $|\bias_{t,S}P|\geq\e$, Lemma \ref{Reduction to one function} implies that there exist $a_{s,i}, b_{s,i} \in \F_p$ for each $s\in S(D,T)$ and each $1\leq i \leq \kappa_{p,D,d,S}(\epsilon)$, such that the bias with respect to $t$ of the polynomial 
\[P_{\new}'= P_{\new} + \sum_{s \in S(D,T)} \sum_{i=1}^{\kappa_{p,D,d,S}(\epsilon)}(a_{s,i} Q_{s,i} + b_{s,i} R_{s,i}) \] 
is at least $p^{-2\kappa_{p,D,d,S}(\epsilon)} \epsilon$. The polynomial $P_{\new}'$ has the two following properties. 
\begin{enumerate} 
\item Each of its monomials has degree at most $T$ and also involves at most $D$ distinct variables. 
\item Each of its monomials has degree at most $T-1$, or involves at most $D-1$ distinct variables. 
\end{enumerate} 
The second property follows from the second expression for $P_\new$ given above, together with the fact that the monomials in $V_s$ involve fewer than $D$ distinct variables, and the fact that the monomials $Q_{s_i}$ and $R_{s,i}$ have degree less than $T$.

These two properties ensure that the pair $(D',T')$ associated with the polynomial $P_\new'$ is less than $(D,T)$ in lexicographical order, which will allow us to apply the inductive hypothesis. First, however, we apply Lemma \ref{Replacing by a polynomial with monomials involving few pairwise distinct variables} to $P_{\new}'$, obtaining a polynomial $P_{\new}''$ such that every monomial $\prod_{i=1}^n x_i^{s_i}$ of $P_{\new}''$ satisfies $s_i\leq |S| - 1$ for all $i \in \lbrack n \rbrack$, and such that the two properties above are still satisfied. Now using the inductive hypotheses we deduce that 
\begin{align*}
\rk_{S} P_{\new}'' &\le H_{p,d-1,S}(p^{-(2\kappa_{p,D,d,S}(\epsilon)+1)} \epsilon) \text{ if } \deg P_{\new}'' < d\\
\rk_{S} P_{\new}'' &\le (H_{p,d,S})_{\le (D,T-1)}(p^{-(2\kappa_{p,D,d,S}(\epsilon)+1)} \epsilon) \text{ if }\deg P_{\new}'' = d \text{ and } T > D\\
\rk_{S} P_{\new}'' &\le (H_{p,d,S})_{\le (D-1,d)}(p^{-(2\kappa_{p,D,d,S}(\epsilon)+1)} \epsilon) \text{ if }\deg P_{\new}'' = d \text{ and } T = D.\\
\end{align*} 
Using our assumption \eqref{degree assumption}, the fact that $P$ coincides with $P_{\new}'' + (P - P_{\new}) + (P_{\new} - P_{\new}')$ on $S^n$ (since $P'_\new$ and $P''_\new$ agree on $S^n$), the decomposition 
\[ P - P_{\new} = \sum_{s \in S(D,T)} \sum_{i=1}^{\kappa_{p,D,d,S}(\epsilon)} Q_{s,i} R_{s,i} \] 
and the fact that $P_{\new}' - P_{\new}$ is a linear combination of polynomials of degree strictly smaller than $\deg P$ we have 
\[\rk_S P \le (\rk_S P_{\new}'') + |S(D,T)| \kappa_{p,D,d,S}(\epsilon) + 1.\] 
It follows that $\rk_S P \le (H_{p,d,S})_{\le (D,S)}(\epsilon)$ with 
\begin{multline}(H_{p,d,S})_{\le (D,S)}= \max\Big\{H_{p,d-1,S}(p^{-(2\kappa_{p,D,d,S}(\epsilon)+1)} \epsilon), (H_{p,d,S})_{\le (D,T-1)}(p^{-(2\kappa_{p,D,d,S}(\epsilon)+1)} \epsilon),\\ 
(H_{p,d,S})_{\le (D-1,d)}(p^{-(2\kappa_{p,D,d,S}(\epsilon)+1)} \epsilon)\Big\} + |S(D,T)| \kappa_{p,D,d,S}(\epsilon) + 1.
\end{multline}
This concludes Case 1.
\medskip

We define the desired function $H_{p,d,S}$ to be $(H_{p,d,S})_{\le (d,d)}$. As we shall show, Case 2 will lead to a contradiction and this function is therefore suitable for Theorem \ref{Main theorem}.
\medskip

\noindent \textbf{Case 2.} There exists $s_0 \in S(D,T)$ such that 
\[\erk P_{s_0} \ge \Lambda_D(B_{D,p,d,|S|^{2}}(\epsilon^{2^D})).\] 
We start with the decomposition 
\begin{equation} P = \sum_{d'=0}^D \sum_{t'=d'}^d \sum_{s \in S(d',t')} P_{s}. \label{decomposition of P with respect to number of variables} \end{equation} 
For each $ t' \in \{D,\dots,d\}$ and $ s \in S(D,t')$ let $ m_{s}$ be a $D$-linear form of the form \eqref{Existence of an underlying multilinear form with symmetries}. That is, $P_s(x)=m_s(x^{s_1},\dots,x^{s_D})$, and $m$ is symmetric in $i$ and $j$ whenever $s_i=s_j$. 

By our assumption on $s_0$ and by Claim \ref{Erk is at most epr}, we have 
\[\epr m_{s_0} \ge \Lambda_D(B_{D,p,d,|S|^{2}}(\epsilon^{2^D})).\] 
By Theorem \ref{Disjoint rank minors theorem} applied to $ m_{s_0}$ there exist disjoint subsets $X_1,\dots,X_D \subset \lbrack n \rbrack$ such that 
\[\pr m_{s_0}(\F^{X_1} \times \dots \times \F^{X_D}) \ge B_{D,p,d,|S|^{2}}(\epsilon^{2^D}).\]

We now apply the argument from Proposition \ref{Decoupling inequality for polynomials}. Although we no longer obtain the inequality \eqref{decoupling inequality with $D-D$} (as we only know that all monomials of $P$ involve at most $D$ pairwise distinct variables rather than that all monomials of $P$ have degree at most $D$), using Proposition \ref{Decoupling for an arbitrary function} and following the first half of the proof of Proposition \ref{Decoupling inequality for polynomials} shows that
\begin{equation} 
|\E_{x \in S^n} \omega_p^{P(x)}|^{2^D} \le \E_{y_1, y_{-1} \in S^n} \omega_p^{\sum_{\nu \in \{-1,1\}^D} (-1)^{N(\nu)} P_{\{X_1,\dots,X_D\}}(y_{\nu_1}(X_1), \dots, y_{\nu_D}(X_D))}. \label{decoupling inequality in the arbitrary case} 
\end{equation} 
The polynomial $ P_{\{X_1,\dots,X_D\} }$ is equal to 
\begin{equation} 
\sum_{t'=D}^d \sum_{s \in S(D,t')} (P_{s})_{\{X_1,\dots,X_D\} } \label{Expression of the restricted polynomial} 
\end{equation} 
as the contribution of the terms from \eqref{decomposition of P with respect to number of variables} obtained from $ d' < D$ is zero. For a fixed $s$, we define an equivalence relation on $\mathcal S_D$, the set of permutations of $[D]$, by taking two permutations $ \sigma_1, \sigma_2$ to be equivalent if and only if $\sigma_2 \sigma_1^{-1}$ leaves the intervals $I_j$ (the intervals on which $s$ is constant) invariant. In other words, $\sigma_1$ and $\sigma_2$ are equivalent if the sequences $(s_{\sigma_1(1)},\dots,s_{\sigma_1(D)})$ and $(s_{\sigma_2(1)},\dots,s_{\sigma_2(D)})$ are equal. Let $ \mathcal{E}_s$ be the set of equivalence classes for this relation, and for each equivalence class $E \in \mathcal{E}_s$ let us pick a representative $\sigma_E \in E$.

For each $t' \in \{D,\dots,d\}$, $s \in S(D,t')$ and $y\in\F_p^n$, we have that $(P_{s})_{ \{ X_1,\dots,X_D \} }(y)$ is equal to
\begin{align*} & \sum_{\sigma \in \mathcal{S}_D} m_{s}(\F^{X_{\sigma(1)}} \times \dots \times \F^{X_{\sigma(D)}})(y(X_{\sigma(1)})^{s_1},\dots,y(X_{\sigma(D)})^{s_D})\\ 
= & \sum_{\sigma \in \mathcal{S}_D} m_{s}(\F^{X_1} \times \dots \times \F^{X_D})(y(X_1)^{s_{\sigma^{-1}(1)}},\dots,y(X_D)^{s_{\sigma^{-1}(D)}})\\
= & \sum_{\sigma \in \mathcal{S}_D} m_{s}(\F^{X_1} \times \dots \times \F^{X_D})(y(X_1)^{s_{\sigma(1)}},\dots,y(X_D)^{s_{\sigma(D)}})\\
= & \sum_{E \in \mathcal{E}_s} \sum_{\sigma \in E}m_{s}(\F^{X_1} \times \dots \times \F^{X_D})(y(X_1)^{s_{\sigma(1)}},\dots,y(X_D)^{s_{\sigma(D)}})\\
= & |I_1(s)|! \dots |I_r(s)|! \sum_{E \in \mathcal{E}_s}m_{s}(\F^{X_1} \times \dots \times \F^{X_D})(y(X_1)^{s_{\sigma_E(1)}},\dots,y(X_D)^{s_{\sigma_E(D)}})\\ \end{align*} using the symmetry of $ m_{s}$. For all $y \in \F_p^n$, using \eqref{Expression of the restricted polynomial} we obtain that $P_{\{X_1,\dots,X_D\} }(y)$ is equal to 
\begin{equation}\sum_{t'=D}^d \sum_{s \in S(D,t')} |I_1(s)|! \dots |I_r(s)|! \sum_{E \in \mathcal{E}_s}m_{s}(\F^{X_1} \times \dots \times \F^{X_D})(y(X_1)^{s_{\sigma_E(1)}},\dots,y(X_D)^{s_{\sigma_E(D)}}) \label{Computed expression of the restricted polynomial} \end{equation} 
and the exponent on the right-hand side of \eqref{decoupling inequality in the arbitrary case} can therefore be rewritten 
\begin{multline} \sum_{t'=D}^d \sum_{s \in S(D,t')} |I_1(s))|! \dots |I_r(s)|! \sum_{E \in \mathcal{E}_s}(m_{s})(\F^{X_1} \times \dots \times \F^{X_D})\\
(y_1(X_1)^{s_{\sigma_E(1)}} - y_{-1}(X_1)^{s_{\sigma_E(1)}},\dots, y_1(X_D)^{s_{\sigma_E(D)}} - y_{-1}(X_D)^{s_{\sigma_E(D)}}). \label{Computed expression of the exponent on the right-hand side of the decoupling inequality} \end{multline}

We finally apply the main result of the previous section, Proposition \ref{Equidistribution of multi-alphabet multilinear forms}. We apply it to the set $\Sigma = S^2$ and to the functions $\pi_i: \Sigma \rightarrow \F_p$ defined by $\pi_i(x',x'') = (x')^i - (x'')^i$ for $1\leq i \leq d-1$. These are linearly independent and do not span a non-zero constant function, as can be seen by fixing $x''$, and moreover we have $ \pr m_{s_0} \ge B_{D,p,d,|S|^{2}}(\epsilon^{2^D})$ and $|I_1(s_0)|! \dots |I_r(s_0)|! \neq 0$. Therefore, the assumptions of Proposition \ref{Equidistribution of multi-alphabet multilinear forms} is satisfied. Applying the proposition and \eqref{decoupling inequality in the arbitrary case} then shows that $|\bias_{t} P| < \epsilon$ for all $t \in \F_p^*$, which is incompatible with our assumption at the start of the proof. This finishes the proof. \end{proof}

\section{Surjectivity of multilinear forms on subsets of finite prime fields}

Proposition \ref{bounded analytic rank implies bounded partition rank for restricted alphabets} states that having a high partition rank is a sufficient condition for a multilinear form $m$ over $\F_p$ (of fixed order and for a fixed prime $p$) to be equidistributed on a product $S_1^n \times \dots \times S_d^n$ with $S_1, \dots, S_d$ subsets of $\F_p$ each containing at least two elements. In this section we show that to ensure that the restriction of $m$ to $S_1^n \times \dots S_d^n$ is surjective, it suffices to fulfill the qualitatively weaker condition that the multilinear form $m$ has high tensor rank.

\begin{definition}

Let $d \ge 2$ be a positive integer, let $\F$ be a field, and let $T: \lbrack n_1 \rbrack \times \dots \times \lbrack n_d \rbrack \rightarrow \F$. The \emph{tensor rank} of the tensor $T$, denoted by $\tr T$, is the smallest nonnegative integer $k$ such that there exist functions $a_{i,\a}: \lbrack n_{\a} \rbrack \rightarrow \F$ for all $\a \in \lbrack d \rbrack$ and all $i \in \lbrack k \rbrack$ such that we can write \[ T(x_1, \dots, x_d) = \sum_{i=1}^k a_{i,1}(x_1) \dots a_{i,d}(x_d) \] for every $(x_1, \dots, x_d) \in \lbrack n_1 \rbrack \times \dots \times \lbrack n_d \rbrack$. \end{definition}

We will use the following result which follows from repeatedly applying Proposition 11.4 from \cite{K.} (by performing the iterations in a way similar to those of the proof of Corollary 11.8 there). For $T$ an order-$d$ tensor, for $I$ a subset of $\lbrack d \rbrack$, for $y \in I^c$ and $z \in I$ let $T((y,z))$ be the value $T(x)$, where $x_{\a} = y_{\a}$ for all $\a \in I^c$ and $x_{\a} = z_{\a}$ for all $\a \in I$.

\begin{proposition}\label{Equivalence of tensor and partition rank} Let $d \ge 2$ be a positive integer, let $\F$ be a field, let $T: \lbrack n_1 \rbrack \times \dots \times \lbrack n_d \rbrack \rightarrow \F$, and let $l \ge 1$ be a positive integer. If $\pr T \le l$ and for every subset $I$ of $\lbrack d \rbrack$ with $2 \le |I| \le d-1$ and all $y \in \prod_{\a \in I^c} \lbrack n_{\a} \rbrack$, the order $|I|$ slice $T_y: x(I) \rightarrow T((x(I),y))$ has order $(d-|I|)$ partition rank at most $l$, then $\tr T \le (4l^3)^{2^d} $. \end{proposition}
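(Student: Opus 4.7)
The plan is to prove the proposition by iteratively applying Proposition 11.4 of \cite{K.}, working up from slices of low order to $T$ itself. That proposition provides a bound on the tensor rank of a tensor in terms of its partition rank together with tensor-rank bounds on its proper slices, and so is precisely the engine needed to convert a uniform partition-rank hypothesis into a tensor-rank conclusion.

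At the base level, any order-$2$ slice $T_y$ of $T$ has partition rank equal to its tensor rank (these two notions coincide for matrices), and by hypothesis $\pr T_y \le l$, so $\tr T_y \le l$. For each $k$ with $3 \le k \le d$ I would then treat the order-$k$ slices of $T$ in turn, viewing $T$ itself as the unique order-$d$ slice. Fix such a slice $T_y$ of order $k$. Any proper sub-slice of $T_y$ of order $j$ with $2 \le j \le k-1$ is also a slice of $T$ of order in $[2, d-1]$, so by hypothesis its partition rank is at most $l$; in particular $\pr T_y \le l$. Feeding the partition-rank bound $l$ for $T_y$, together with the tensor-rank bound $B_{k-1}$ on order-$(k-1)$ slices produced at the previous step, into Proposition 11.4 of \cite{K.} yields a bound $B_k$ on the tensor rank of every order-$k$ slice of $T$. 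Continuing up to $k = d$ gives $\tr T \le B_d$.

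The main obstacle is the bookkeeping of the constants produced by these iterations. The exponent $2^d$ appearing in $(4l^3)^{2^d}$ reflects that each iteration of Proposition 11.4 roughly cubes the previous tensor-rank bound and multiplies by a constant of order $4$, with the precise way in which these $4l^3$-blocks compound across the $d$ levels accounting for the doubly-exponential growth in $d$. This accounting is carried out in exactly the same fashion as in the proof of Corollary 11.8 of \cite{K.}, and constitutes the routine but lengthy core of the argument; no new ideas are required beyond what is already in that paper, only the observation that the hypothesis on $T$ supplies the partition-rank input needed at every level of the iteration.
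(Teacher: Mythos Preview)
Your proposal is correct and matches the paper's own treatment: the paper does not give a detailed proof either, stating only that the result follows from repeatedly applying Proposition~11.4 of \cite{K.} in the manner of Corollary~11.8 there, which is precisely the iterative scheme you describe. One small wording slip: the claim ``in particular $\pr T_y \le l$'' does not follow from the preceding sentence about proper sub-slices of $T_y$; rather, $\pr T_y \le l$ holds directly by hypothesis (either because $T_y = T$ when $k=d$, or because $T_y$ is a slice of order $k \in [2,d-1]$).
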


We define a sequence $\Theta(p,d)$ for all $d \ge 2$ by \[\Theta(p,2) = K_{p,p^{-1},p^{-1}}((2p)^{-1}) \text{ and for all }d \ge 3\text{, }\Theta(p,d) = (4(A_{d,\F_p}(p \Theta(p,d-1)))^3)^{2^d},\] where $K_{p,p^{-1},p^{-1}}$ has been defined in Corollary \ref{flipped bilinear case}.

\begin{proposition}\label{Surjectivity of a multilinear form with high tensor rank} Let $ p$ be a prime and let $ d \ge 2$ be a positive integer. There exists $ \Theta(d,p)$ such that whenever $ T: \lbrack n_1 \rbrack \times \dots \times \lbrack n_d \rbrack \rightarrow \F_p$ is an order-$d$ tensor such that $ \tr T \ge \Theta(d,p)$, then whenever $ S_1,\dots,S_d$ are subsets of $ \F_p$ each containing at least two elements, the $d$-linear form $m:(\F_p^n)^d \rightarrow \F_p$ associated with $ T$ is surjective. \end{proposition}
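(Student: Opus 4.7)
The plan is to induct on $d$. For the base case $d=2$, a tensor is a matrix and $\tr T = \pr T = \rk T$. Since the uniform distribution $\mu_{S_i}$ satisfies $\mu_{S_i}(x) \le 1/|S_i| \le 1/2 \le 1 - p^{-1}$ for all $x \in \F_p$, the contrapositive of Corollary~\ref{flipped bilinear case} (with $c_1 = c_2 = p^{-1}$ and $\epsilon = (2p)^{-1}$) tells us that $\rk T \ge K_{p, p^{-1}, p^{-1}}((2p)^{-1}) = \Theta(p, 2)$ forces $|\bias_{t, (\mu_{S_1}, \mu_{S_2})} m| < (2p)^{-1}$ for every $t \in \F_p^*$. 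Fourier inversion
\[
\P_{x \in S_1^n \times S_2^n}[m(x) = c] = p^{-1} + p^{-1} \sum_{t \ne 0} \omega_p^{-tc}\, \bias_{t, (\mu_{S_1}, \mu_{S_2})} m
\]
then yields $\P[m(x)=c] > p^{-1} - (p-1)/(2p^2) > 0$ for every $c \in \F_p$, giving surjectivity.

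For the inductive step, set $l := A_{d,\F_p}(p\,\Theta(p,d-1))$, so that $\Theta(p,d) = (4l^3)^{2^d}$. The contrapositive of Proposition~\ref{Equivalence of tensor and partition rank} applied to $T$ yields one of the following: either $(\mathrm{a})$ $\pr T > l$, or $(\mathrm{b})$ there exist $I \subset [d]$ with $2 \le |I| \le d-1$ and $y \in \prod_{\alpha \in I^c}[n_\alpha]$ such that the slice $T_y$ satisfies $\pr T_y > l$.

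In case $(\mathrm{b})$ I would lift the fixed indices into genuine vectors of $\prod_{\alpha \in I^c} S_\alpha^n$. Since $|S_\alpha| \ge 2$, pick $a_\alpha \ne b_\alpha$ in $S_\alpha$; then the standard basis vector $e_{y_\alpha}$ equals $(a_\alpha - b_\alpha)^{-1}(v_\alpha - b_\alpha \mathbf{1})$, where $b_\alpha \mathbf{1} \in S_\alpha^n$ and $v_\alpha \in S_\alpha^n$ agrees with $a_\alpha$ at position $y_\alpha$ and with $b_\alpha$ elsewhere. Expanding $T_y$ by multilinearity writes it as a signed linear combination of at most $2^{|I^c|}$ ``$S$-slices'' $T^u(x(I)) := m(x(I), u)$ with $u \in \prod_{\alpha \in I^c} S_\alpha^n$. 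Subadditivity of partition rank then yields some such $u$ with $\pr T^u \ge 2^{-|I^c|}\pr T_y > 2^{-(d-2)} l$, which the superlinear growth of $A_{d,\F_p}$ comfortably forces above $\Theta(p, |I|)$. Since $\tr T^u \ge \pr T^u$, the inductive hypothesis applied to the order-$|I|$ tensor $T^u$ makes $m^u$ surjective on $\prod_{\alpha \in I} S_\alpha^n$; plugging $u$ back in gives surjectivity of $m$ on $\prod_{\alpha \in [d]} S_\alpha^n$.

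In case $(\mathrm{a})$, $\pr T > l = A_{d,\F_p}(p\Theta(p,d-1))$ forces $\ar T > p\Theta(p,d-1)$ by minimality of $A_{d, \F_p}$. I would then invoke the contrapositive of Proposition~\ref{Equidistribution of multilinear forms for distributions} with the product distribution $\mu_{S_1} \otimes \dots \otimes \mu_{S_d}$ (each factor of maximum at most $1/2$) and threshold $\epsilon = (2p)^{-1}$ to deduce $|\bias_{t, (\mu_{S_1}, \dots, \mu_{S_d})} m| < (2p)^{-1}$ for every $t \ne 0$; Fourier inversion once more yields surjectivity. The hardest part of the overall argument is the calibration of constants: verifying that $l$ indeed exceeds both $2^{d-2}\Theta(p,|I|)$ (required in case $(\mathrm{b})$) and $B_{d,p,1/2}((2p)^{-1})$ (required in case $(\mathrm{a})$). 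Both inequalities hold thanks to the extremely rapid growth of $A_{d,\F_p}$ built into the recurrence $\Theta(p,d) = (4(A_{d,\F_p}(p\Theta(p,d-1)))^3)^{2^d}$, but the bookkeeping needed to track them through the nested recursion is the main technical obstacle.
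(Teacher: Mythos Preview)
Your overall induction and case split via Proposition~\ref{Equivalence of tensor and partition rank} is reasonable, but the calibration you flag as ``the main technical obstacle'' genuinely fails with the paper's choice of $\Theta$, and your appeals to growth properties of $A_{d,\F_p}$ do not rescue it.

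In case~(a) you need $l = A_{d,\F_p}(p\Theta(p,d-1)) \ge B_{d,p,1/2}((2p)^{-1})$ to invoke Proposition~\ref{Equidistribution of multilinear forms for distributions}. Since $B_{d,p,1/2}((2p)^{-1}) = A_{d,\F_p}\bigl((p-1)M(p,1/2)\,B_{d-1,p,1/2}((4p)^{-1}) + \cdots\bigr)$ and $A_{d,\F_p}$ is increasing, this reduces to $p\Theta(p,d-1) \gtrsim (p-1)M(p,1/2)\,B_{d-1,p,1/2}((4p)^{-1})$. For $d=3$ this reads $p\,K_{p,p^{-1},p^{-1}}((2p)^{-1}) \gtrsim (p-1)M(p,1/2)\,K_{p,1/2,1/2}((4p)^{-1})$; plugging in the explicit formulae, the left side is of order $p^5(\log p)^2$ while the right side is of order $p^5(\log p)^3$, so the inequality fails for large $p$. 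The growth of $A_{d,\F_p}$ cannot help here, since it appears symmetrically on both sides. (Also note that your detour through $\ar T$ is irrelevant: Proposition~\ref{Equidistribution of multilinear forms for distributions} takes partition rank as input, and you already have $\pr T > l$.) In case~(b) you invoke ``superlinear growth of $A_{d,\F_p}$'' to get $2^{-(d-2)}l > \Theta(p,|I|)$, but $A_{d,\F_p}$ is defined in the paper as the \emph{minimal} function with $\pr \le A_{d,\F_p}(\ar)$; it is conjectured to be linear, and if it is, you only get $l \ge p\Theta(p,d-1)$, which fails to dominate $2^{d-2}\Theta(p,d-1)$ once $2^{d-2} > p$.

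The paper avoids both issues by arguing differently. It never invokes Proposition~\ref{Equidistribution of multilinear forms for distributions} at all: in \emph{both} cases it reduces to a $(d-1)$-linear form and applies induction. In the analogue of your case~(b) it only considers order-$(d-1)$ slices (lower-order slices are controlled for free since tensor rank is monotone under extension), and it writes the relevant basis vector $e_a$ as a sum of exactly $p$ elements of $S_1^n$ (using that any element of $\F_p$ is a sum of $p$ elements of $S_1$), so subadditivity costs a factor $p$ rather than $2^{d-2}$. In the high-partition-rank case it uses $\ar T \ge p\Theta(p,d-1)$ together with $p^{-\ar T} = \E_u p^{-\ar(u.T)}$ to find $u \in \F_p^{n_1}$ with $\tr(u.T) \ge \ar(u.T) \ge p\Theta(p,d-1)$, then again decomposes $u$ as a sum of $p$ elements of $S_1^{n_1}$ and applies induction. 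The factor $p$ in the recursion $\Theta(p,d) = (4(A_{d,\F_p}(p\Theta(p,d-1)))^3)^{2^d}$ is tailored precisely to this $p$-term decomposition. Your strategy can be made to work, but only by defining your own, larger threshold $\Theta'$ rather than trying to reuse the paper's.
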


\begin{proof} We prove the result by induction on $ d$. The result holds for $d=2$ by Proposition \ref{Equidistribution of high rank bilinear forms}. We now take $d \ge 3$, and assume $\tr T \ge \Theta(p,d)$.
\medskip

\noindent \textbf{Case 1}: There exists an order-$(d-1)$ slice of T, which without loss of generality we can assume to be the slice $T_a:(x_2,\dots,x_d) \mapsto T(a,x_2,\dots,x_d)$, with order-$(d-1)$ tensor rank at least $A_{d,\F_p}(p \Theta(p,d-1)) \ge p \Theta(p,d-1)$.

Because $S_1$ contains at least two elements, there exist $ b_1,\dots,b_p, c_1,\dots,c_p \in S_1$ such that $ b_1 + \dots + b_p = 1$ and $ c_1 + \dots + c_p = 0$. Writing the identity \[ 1_{a} = (b_1 1_{a} + c_1 (1-1_a)) + \dots + (b_p 1_a + c_p (1-1_a))\] between elements of $\F_p^{n_1}$ and using subadditivity of the tensor rank there exists $ i \in \lbrack p \rbrack$ such that $ \tr (b_i 1_a + c_i (1-1_a)) \ge \Theta(p,d-1)$. Letting $u$ the element of $\F_p^{n_1}$ defined by $ u:= b_i 1_a + c_i (1-1_a)$, by the inductive hypothesis we have $ m(\{u\} \times S_2^n \times \dots \times S_d^n) = \F_p$. By construction $ u \in S_1^n$, so in particular we have $ m(S_1^n \times S_2^n \times \dots \times S_d^n) = \F_p$.
\medskip

\noindent \textbf{Case 2}: We are not in Case 1. Then for every subset $I$ of $\lbrack d \rbrack$ with $2 \le |I| \le d-1$ and all $y \in \prod_{\a \in I^c} \lbrack n_{\a} \rbrack$, \[\pr T_y \le \tr T_y \le \tr T_{\{y_0\}} \le A_{d,\F_p}(p \Theta(p,d-1))\] where $T_{\{y_0\}}$ is an order $(d-1)$ slice of $T$ with domain containing the domain of $T_y$.

By our assumption $\tr T \ge \Theta(p,d)$ and Proposition \ref{Equivalence of tensor and partition rank} we necessarily have $ \pr T \ge A_{d,\F_p}(p \Theta(p,d-1))$. Therefore, $ \ar T \ge p \Theta(p,d-1)$. Defining for each $u \in \F_p^{n_1}$ the order $d-1$ tensor $u.T: \lbrack n_2 \rbrack \times \dots \times \lbrack n_d \rbrack \rightarrow \F_p$ by 
\[(u.T)(x_2, \dots, x_d) = \sum_{x_1=1}^{n_1} u(x_1) T(x_1,x_2, \dots, x_d)\] for every $(x_2, \dots, x_d) \in \lbrack n_2 \rbrack \times \dots \times \lbrack n_d \rbrack$ and using that $p^{-\ar T} = \E_{u \in \F_p^{n_1}} p^{-\ar u.T}$ we can find $u \in \F_p^{n_1}$ such that \[ \tr u.T \ge \pr u.T \ge \ar u.T \ge p \Theta(p,d-1).\]

The $ (d-1)$-linear form associated with the tensor $ u.T$ is $ (y_2,\dots,y_d) \mapsto m(u,y_2,\dots,y_d)$. Because $ S$ has size at least $ 2$, every element of $ \F_p$ can be written as a sum of at most $ p$ elements of $ S$, so for each $ x_1 \in \lbrack n \rbrack$ there exist $ u_1(x_1),\dots,u_p(x_1) \in S_1$ such that we can write $ u(x_1) = u_1(x_1) + \dots + u_p(x_1)$. By subadditivity $ \tr u.T \le \sum_{i=1}^p \tr u_i.T$, so there exists $i \in \lbrack p \rbrack$ such that $ \tr u_i.T \ge \Theta(p,d-1)$. Since $u_i \in S$ and by the inductive hypothesis we have $ m(\{u_i\} \times S_2^n \times \dots \times S_d^n) = \F_p$, we conclude that $m(S_1^n \times S_2^n \times \dots \times S_d^n) = \F_p$. \end{proof} 

In summary we have shown that for a fixed prime $p$, a fixed positive integer $d \ge 2$, and fixed non-empty subsets $S_1, \dots, S_d$ of $\F_p$, the behaviour of the range and distribution of a $d$-linear form $m:(\F_p)^d \rightarrow \F_p$ is as follows.

\begin{enumerate}

\item If $ \pr T$ is high (and hence $ \tr T$ is high) then $ m$ is approximately uniformly distributed on $S_1^n \times \dots \times S_d^n$.

\item If $ \pr T$ is low but $ \tr T$ is high, then $ m$ is not necessarily approximately uniformly distributed on $S_1^n \times \dots \times S_d^n$ but $m(S_1^n \times \dots \times S_d^n) = \F_p$.

\item If $ \tr T$ is low (and hence $ \pr T$ is low) then the image $m(S_1^n \times \dots \times S_d^n)$ is not necessarily the whole of $\F_p$.

\end{enumerate}

\section{Open problems}

Our results still leave open a number of questions. We have used the assumption $d < p$ in Theorem \ref{Main theorem} to obtain that $d! \neq 0$, which allowed us to assign a unique underlying $d$-linear form to a homogeneous polynomial of degree $d$, and later allowed us to more generally assign $D$-linear forms to the polynomials $P_{s}$. However it seems likely to us that this assumption can be removed. Indeed, the original paper \cite{Green and Tao} of Green and Tao used the assumption $d < p$ for the same purpose, to guarantee that $d! \neq 0$, but the assumption was nonetheless later removed in \cite{Kaufman and Lovett} by Kaufman and Lovett.

\begin{conjecture} Theorem \ref{Main theorem} still holds for $d \ge p$. \end{conjecture}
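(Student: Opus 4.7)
The plan is to address the two places in our argument where $d<p$ is essential: first in Lemma~\ref{unique symmetric multilinear form}, where invertibility of $d!$ is used to lift a homogeneous degree-$d$ polynomial to a symmetric $d$-linear form, and second in the analogous construction of the forms $m_s$ appearing in \eqref{Existence of an underlying multilinear form with symmetries} in the proof of Theorem~\ref{Main theorem}. Everything downstream of these constructions---Theorem~\ref{Disjoint rank minors theorem}, Proposition~\ref{Equidistribution of multi-alphabet multilinear forms}, and the decoupling inequality Proposition~\ref{Decoupling inequality for polynomials}---is characteristic-free and can be reused verbatim. So the task reduces to producing, from a polynomial $P$ of degree $d \geq p$ that is biased on $S^n$, a multilinear object of bounded partition rank whose low rank implies that $P$ has bounded rank on $S^n$.

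My first step would be to replace polarization by iterated additive differencing. For each tuple $(h_1,\dots,h_d) \in (\F_p^n)^d$ set
\[\Delta_{h_1,\dots,h_d} P(x) = \sum_{\omega \in \{0,1\}^d}(-1)^{d-|\omega|}P\bigl(x+\omega_1 h_1+\dots+\omega_d h_d\bigr),\]
which depends only on $(h_1,\dots,h_d)$ (not on $x$) when $P$ has degree exactly $d$, and is a symmetric $d$-linear form in $(h_1,\dots,h_d)$ whose coefficients are valid in any characteristic. The first goal is to prove an inequality of Gowers-norm type: combining Proposition~\ref{Decoupling for an arbitrary function} applied iteratively with the observation that $S^n-S^n$ is a dense subset of $\F_p^n$ with respect to the convolved measure $(\mu_S-\mu_S)^n$ (which was already the engine behind Proposition~\ref{Decoupling inequality for polynomials}), one should obtain
\[|\bias_{t,S}P|^{2^d} \le \E_{h_1,\dots,h_d \sim (\mu_S-\mu_S)^n}\omega_p^{t\,\Delta_{h_1,\dots,h_d}P}.\]
The right-hand side is the bias (at a shifted distribution) of a multilinear form $M_P:(\F_p^n)^d\to\F_p$, and the apparatus of Section~\ref{Section: Equidistribution of multi-alphabet multilinear forms} then forces $\pr M_P$ to be bounded.

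The hard part is the converse direction: recovering a bounded-rank representation of $P$ on $S^n$ from a bounded partition-rank decomposition of $M_P$. When $d<p$ this was immediate because $M_P=d!\,m$ with $P(y)=m(y,\dots,y)$, so $\epr m$ controls $\erk P$; when $d\geq p$ this identification fails and $M_P$ can vanish identically even though $P$ is genuinely of degree $d$ (take $P(x)=x_1^p$ with $p=d$). The natural fix, following Kaufman-Lovett, is to work with non-classical polynomials taking values in $\frac{1}{p^{k}}\Z/\Z$ for a suitable $k$ depending on $d$ and $p$: the $d$-fold additive derivative of a non-classical polynomial of classical degree $d$ is non-degenerate even when $d\geq p$, and the biased-polynomial regularization lemma of \cite{Kaufman and Lovett} expresses $P$ (up to a polynomial of lower shifted degree) as a bounded function of such non-classical polynomials, which one can then convert back into a bounded-rank sum of products using the definition of $\rk_dP$ as in Definition~\ref{rank of a polynomial}.

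Once this substitution is in place, the rest of the proof of Theorem~\ref{Main theorem} carries over. Cases~1 and~2 of the main induction are unchanged, since Case~1 only uses subadditivity of rank and Case~2 only uses the decoupling inequality \eqref{decoupling inequality in the arbitrary case} combined with Theorem~\ref{Disjoint rank minors theorem}. The constant $|I_1(s)|!\cdots|I_r(s)|!$ appearing in \eqref{Computed expression of the restricted polynomial} is the one potentially problematic place, since these factorials may vanish mod $p$; to handle this one instead keeps track of each permutation class separately, applying Proposition~\ref{Equidistribution of multi-alphabet multilinear forms} to the unsymmetrized sum. The main expected obstacle is technical rather than conceptual---tracking the non-classical degree through the induction and verifying that Theorem~\ref{Disjoint rank minors theorem}, a result about classical tensors, extends to the non-classical setting or can be bypassed by working with $\F_p$-valued representatives of each non-classical coefficient.
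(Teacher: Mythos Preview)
The statement you are addressing is a \emph{conjecture}, not a theorem: the paper does not prove it and explicitly lists it in the Open Problems section. There is therefore no paper proof to compare against; what you have written is a research outline rather than a proof, and should be evaluated as such.

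As an outline it identifies the right pressure points, but several of the steps you describe as routine are in fact the heart of the difficulty. Most seriously, you correctly observe that when $d\ge p$ the additive derivative $\Delta_{h_1,\dots,h_d}P$ can vanish identically (your own example $P(x)=x_1^p$), which means your proposed inequality $|\bias_{t,S}P|^{2^d}\le\E\,\omega_p^{t\,\Delta_{h_1,\dots,h_d}P}$ becomes vacuous precisely in the regime you are trying to handle; so the first half of your plan does not actually produce a usable multilinear object. Invoking non-classical polynomials \`a la Kaufman--Lovett is the right instinct for the converse direction, but you do not explain how their machinery interacts with the restricted alphabet $S$: their argument relies on averaging over all of $\F_p^n$ and on the $U^{d+1}$-norm structure, neither of which transfers automatically to $S^n$ (this is exactly the obstruction the paper discusses in the introduction when explaining why the Janzer--Mili\'cevi\'c deduction fails for restricted alphabets). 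Your proposed fix for the vanishing factorials $|I_1(s)|!\cdots|I_r(s)|!$---working with the unsymmetrized sum over permutations---does not obviously work either: the symmetrization is what collapses the sum over $\sigma\in\mathcal S_D$ into a single multilinear form whose high rank can be fed into Proposition~\ref{Equidistribution of multi-alphabet multilinear forms}; without it you have $D!$ separate restrictions $m_s(\F^{X_{\sigma(1)}}\times\cdots\times\F^{X_{\sigma(D)}})$ whose ranks need not be individually large, and whose sum need not be controlled by any one of them. Finally, you flag but do not resolve whether Theorem~\ref{Disjoint rank minors theorem} extends to non-classical tensors; this is a genuine question, not a technicality.
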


We can next ask try to improve our bounds. 

\begin{conjecture} \label{Bounds for multilinear forms} Let $p$ be a prime, let $d \ge 2$ be a positive integer, and let $S$ be a non-empty subset of $\F_p$. Then there exists a constant $C_{p,d,S}^{\pr}>0$ such that for every $d$-linear form $m:(\F_p^n)^d \rightarrow \F_p$, if $\max_{t \in\F_p^*} |\bias_{t,S} m| \ge \epsilon$ then $\pr m \le C_{p,d,S}^{\pr} \log \epsilon^{-1}$. \end{conjecture}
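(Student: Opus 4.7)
The plan is to mirror the induction used in Proposition \ref{Equidistribution of multilinear forms for distributions}, but aim for \emph{linear} dependence on $\log\epsilon^{-1}$ at every recursive step. The base case $d=2$ is already available: Corollary \ref{flipped bilinear case} gives $\pr b = \rk b \le K_{p,c,c}(\epsilon) = O_{p,S}(\log\epsilon^{-1})$ when $c = \min_{t\in\F_p} (1 - \mathbbm 1_S * \mathbbm 1_{-S}(t)/|S|^2) > 0$ (which is positive since $|S| \ge 2$). So the constant $C^{\pr}_{p,2,S}$ exists and is explicit.

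For $d \ge 3$, assume inductively that $C^{\pr}_{p,d-1,S}$ exists. Given $m$ with $|\bias_{t,S} m| \ge \epsilon$, the same averaging step as in the proof of Proposition \ref{Equidistribution of multilinear forms for distributions} shows that the set of $u \in S^n$ with $|\bias_{t,S} m_u| \ge \epsilon/2$ has density at least $\epsilon/2$ in $S^n$. By the inductive hypothesis applied to each such slice, these $u$ satisfy $\pr m_u \le C^{\pr}_{p,d-1,S} \log(2\epsilon^{-1})$. Propositions \ref{Dense sumsets for distributions on $F_p$} and the subadditivity of partition rank transfer this to a set of $u \in \F_p^n$ of density at least $(\epsilon/2)^{(p-1)M}$ on which $\pr m_u \le (p-1)M \cdot C^{\pr}_{p,d-1,S} \log(2\epsilon^{-1})$. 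Using $\bias m = \E_u \bias m_u$ together with the Lovett/Kazhdan-Ziegler inequality $\ar m_u \le \pr m_u$ then yields
\[\ar m \;\le\; (p-1)M\,C^{\pr}_{p,d-1,S}\log(2\epsilon^{-1}) + (p-1)M\log_p(2\epsilon^{-1}) \;=\; O_{p,d,S}(\log\epsilon^{-1}).\]

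The main obstacle — and the reason this statement is only a conjecture in the paper — is the final step of translating the linear analytic-rank bound back to a linear partition-rank bound. The current black box Theorem \ref{bounded analytic rank implies bounded partition rank} gives $\pr m \le A_{d,\F_p}(\ar m)$, and the best known $A_{d,\F_p}$ (due to Janzer and Mili\'cevi\'c) is polynomial, not linear, in its argument. Substituting this into the estimate above gives a polylogarithmic bound on $\pr m$, which is exactly the bound the paper actually proves, and falls short of the conjectured linear bound. The conjecture \ref{Bounds for multilinear forms} is therefore essentially equivalent (taking $S = \F_p$, one recovers exactly a linear $\pr \le C_{d,p} \ar$ bound for full-alphabet tensors) to the long-standing open problem of whether partition rank is linearly comparable to analytic rank over $\F_p$.

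An alternative route would be to avoid the detour through analytic rank entirely: use the positive-density set of slices with partition rank $O(\log\epsilon^{-1})$ to directly synthesize a partition-rank decomposition of $m$, perhaps through a ``regularity'' step that assembles the local decompositions $m_u = \sum_{j \le k} a_{u,j} \otimes b_{u,j}$ into global factors. I expect this to be the hard step: turning a positive-density family of low-partition-rank slices into a genuine low-rank decomposition of the ambient tensor, without quantitative loss, appears to require new ideas beyond the Fourier/decoupling toolkit used here.
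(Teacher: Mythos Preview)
Your analysis is accurate, and there is nothing to compare against: the statement is a \emph{conjecture} in the paper's open-problems section, not a proved result. The paper offers no proof and explicitly notes that the special case $S=\F_p$ is precisely the well-known open problem that partition rank and analytic rank are linearly comparable --- the same reduction you make. Your outline of why the natural inductive scheme from Proposition~\ref{Equidistribution of multilinear forms for distributions} stalls at the final $A_{d,\F_p}$ step is exactly the obstruction; with the current polynomial bounds of Janzer and Mili\'cevi\'c one obtains only a polylogarithmic dependence, which is indeed what the paper actually establishes (Proposition~\ref{Equidistribution of multilinear forms for distributions} and its generalization Proposition~\ref{Equidistribution of multi-alphabet multilinear forms}).

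One minor quibble: in your base case you invoke Corollary~\ref{flipped bilinear case} with the distribution $D=\mu_S*\mu_{-S}$, but the conjecture as stated concerns $\bias_{t,S}$ directly, i.e.\ the uniform distribution on $S$ itself, not the difference distribution. This is harmless --- Corollary~\ref{flipped bilinear case} applies to any pair of distributions bounded away from a point mass, and the uniform distribution on $S$ with $|S|\ge 2$ satisfies $c=1-1/|S|\ge 1/2$ --- so the linear bound for $d=2$ is still available. Your speculative ``alternative route'' of assembling slice decompositions directly into a global one is not discussed in the paper, but is in the spirit of regularity-lemma approaches to these questions; as you say, making it work without loss is the hard part.
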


Conjecture \ref{Bounds for multilinear forms} specialises in the case $S =\F_p$ to the well-known conjecture that partition and analytic rank are equal up to a constant, and which was recently established in the large fields case by Cohen and Moshkovitz \cite{Cohen and Moshkovitz}. As we explained (following Janzer and Mili\'cevi\'c) in the introduction, in the case $S =\F_p$ a bound $\pr \le A_{d,p}(\ar)$ translates into a bound of the type $O_{p,d}(1) A_{d,p}(\ar)$ in Theorem \ref{Main theorem}. Although the proof of this implication no longer holds for an arbitrary non-empty subset $S$ of $\F_p$, it seems at least plausible to us that if Conjecture \ref{Bounds for multilinear forms} is true then we can take linear bounds in $\log \epsilon^{-1}$ in Theorem \ref{Main theorem}.

\begin{conjecture}\label{Bounds for polynomials} Let $p$ be a prime, let $d \ge 2$ be a positive integer, and let $S$ be a non-empty subset of $\F_p$. Then there exists a constant $C_{p,d,S}^{\rk}>0$ such that for every polynomial $P: \F_p^n \rightarrow \F_p$ with $\deg P = d$, if $\max_{t \in\F_p^*} |\bias_{t,S} P| \ge \epsilon$ then $\rk P \le C_{p,d,S}^{\rk} \log \epsilon^{-1}$. \end{conjecture}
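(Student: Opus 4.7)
The plan is to derive Conjecture \ref{Bounds for polynomials} from Conjecture \ref{Bounds for multilinear forms} by re-examining the proof of Theorem \ref{Main theorem} with sharper quantitative bookkeeping; this is exactly the implication the authors flag as plausible in the paragraph preceding the conjecture. The strategy rests on the observation that in the reduction from polynomials to multilinear forms used in the main theorem, every step that shrinks the bias does so by at most a factor $p^{-O_{p,d,S}(1)\cdot \kappa}$ for some $\kappa$ that will itself be a function of $\log\epsilon^{-1}$, and the rank costs are additive in $\kappa$. Consequently, if one can guarantee that $\kappa$ is linear in $\log \epsilon^{-1}$, then both the shrunken $\log(\epsilon')^{-1}$ and the total added rank are linear in $\log \epsilon^{-1}$, and a direct induction closes.

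The execution would follow the paper's proof of Theorem \ref{Main theorem} line by line. First, use Lemma \ref{Replacing by a polynomial with monomials involving few pairwise distinct variables} to reduce to $P$ whose monomials all have each exponent $\leq |S|-1$, fixing a pair $(D,T)$. The base case $D=1$ is already linear by Remark \ref{Low bias for polynomials without products of two variables}. For the inductive step, replace the function $B_{D,p,d,|S|^2}$ (which is currently of tower-type because of Theorem \ref{bounded analytic rank implies bounded partition rank}) by the linear function $A_{D,p,d,|S|^{2}}\log\epsilon^{-1}$ granted by Conjecture \ref{Bounds for multilinear forms}. In Case 1 of the inductive step one writes each $P_s = V_s + \sum_{i=1}^{\erk P_s} Q_{s,i}R_{s,i}$, invokes Lemma \ref{Reduction to one function}, and iterates. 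Since the decoupling in Proposition \ref{Decoupling inequality for polynomials} only costs a $2^D$ factor in $\log\epsilon^{-1}$, and Lemma \ref{Reduction to one function} only costs an additive $2\kappa \log p$, one obtains $\log(\epsilon')^{-1} = O_{p,d,S}(\log\epsilon^{-1})$ and a rank contribution of $|S(D,T)|\kappa + 1 = O_{p,d,S}(\log \epsilon^{-1})$. In Case 2, the choice of $\kappa$ and the conjectured linear multilinear bound force a contradiction, as in the paper. Summing the contributions over the $O_d(1)$ many inductive steps preserves linearity in $\log\epsilon^{-1}$, yielding the constant $C_{p,d,S}^{\rk}$.

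The main obstacle is hidden in the definition $\kappa_{p,D,d,S}(\epsilon) = \Lambda_D\bigl(B_{D,p,d,|S|^2}(\epsilon^{2^D})\bigr)$. Even if Conjecture \ref{Bounds for multilinear forms} makes the inner $B_{D,p,d,|S|^2}$ linear, one still composes with $\Lambda_D$, the function from Theorem \ref{Disjoint rank minors theorem} that measures how large the essential partition rank of a tensor must be in order to guarantee a disjoint product restriction of partition rank at least $l$. The currently available bounds for $\Lambda_D$ are far from linear — they are only known to be finite and are expected to be at least polynomial — so blindly composing destroys the linearity. To complete the proof of Conjecture \ref{Bounds for polynomials} via this route one therefore needs one of two additional ingredients: either a proof that $\Lambda_D$ is itself linear (or at worst polynomial of small degree, combined with finer control of $\epsilon$), or a replacement for the ``decoupling via disjoint product of alphabets'' step that relates $|\bias_{t,S}P|$ directly to the bias of $m_{s_0}$ on an unrestricted product, bypassing $\epr \to \dpr$ altogether. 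This second route seems more promising and, in my view, is where the real work lies; it would amount to a Fourier-analytic reduction analogous in spirit to the Kaufman--Lovett removal of the $d<p$ hypothesis in the classical Green--Tao theorem, but performed on restricted alphabets.
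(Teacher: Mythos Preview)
This statement is a \emph{conjecture}, not a theorem; the paper does not prove it, so there is no proof to compare against. What you have written is not a proof either --- as you openly acknowledge --- but a conditional strategy: assume Conjecture~\ref{Bounds for multilinear forms}, track constants through the inductive proof of Theorem~\ref{Main theorem}, and observe that the obstruction to linearity is the function $\Lambda_D$ from Theorem~\ref{Disjoint rank minors theorem}.

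Your diagnosis of the bottleneck matches the paper's own discussion in Section~7 almost exactly. There is, however, one substantive disagreement. You list a linear bound for $\Lambda_D$ (the conjecture from \cite{K.} that essential and disjoint partition rank are linearly comparable) as one of two routes that would close the argument. The authors explicitly dispute this: they write that even if that conjecture were proved, ``owing to the inductive structure of our proof of Theorem~\ref{Main theorem} this would still not yield linear bounds in $\log\epsilon^{-1}$'', and that they ``expect that arguments significantly different from those that we have used in our inductive proof would be required''. It may be worth re-examining your bookkeeping carefully against this assertion --- a naive trace of the recursion with both black boxes linear does seem to produce a constant depending only on $p,d,S$ --- but you should at minimum be aware that the authors themselves do not regard linear $\Lambda_D$ as sufficient within their framework. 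Your second proposed route, bypassing the $\epr\to\dpr$ decoupling step altogether via a direct Fourier-analytic reduction, is the one more in line with what the paper suggests is actually needed.
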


Even if Conjecture \ref{Bounds for multilinear forms} is true, it would still not guarantee linear bounds in $\log \epsilon^{-1}$ in Theorem \ref{Main theorem}. It is conjectured in \cite{K.} that the disjoint partition rank and essential partition rank are also equal up to a constant. If this were proved then the bounds of Theorem \ref{Main theorem} would significantly improve, but owing to the inductive structure of our proof of Theorem \ref{Main theorem} this would still not yield linear bounds in $\log \epsilon^{-1}$. We therefore expect that arguments significantly different from those that we have used in our inductive proof would be required to prove Conjecture \ref{Bounds for polynomials}, if it is true.

In another direction we can ask for qualitative strengthenings of Theorem \ref{Main theorem}: even in the case $S = \F_p$, having high rank is merely a sufficient, and not a necessary condition for a polynomial to be approximately uniformly distributed. For instance, if $I,J$ are two disjoint subsets of $\lbrack n \rbrack$, and $P_1,P_2$ are two polynomials both with degree at least $2$ such that $P_1$ is a polynomial in the variables $x_i$, $i \in I$ and $P_2$ is a polynomial in the variables $x_j$, $j \in J$, then it suffices that the rank of either of the individual polynomials $P_1,P_2$ is large for their sum $P_1+P_2$ to be approximately uniformly distributed, but if $\deg P_1 < \deg P_2$ and $\rk P_2 =1$, then $\rk P_1+P_2 = 2$.

\begin{question} \label{Structure of polynomials} Let $p$ be a prime, let $d \ge 2$ be a positive integer, let $S$ be a non-empty subset of $\F_p$, and let $\epsilon > 0$. Let $P: \F_p^n \rightarrow \F_p$ be a polynomial such that $\max_{t \in\F_p^*} |\bias_{t,S} P| \ge \epsilon$. Can we describe the structure of $P$ over and above the fact that there exists a polynomial $P_0: \F_p^n \rightarrow \F_p$ such that $P_0(S^n) = \{0\}$ and $P-P_0$ has bounded rank ? \end{question}

In fact, to our knowledge not much is known about Question \ref{Structure of polynomials} even in the case $S=\F_p$. Theorem \ref{Main theorem} is an analogue of Proposition \ref{bounded analytic rank implies bounded partition rank for restricted alphabets} for polynomials, and we can also ask whether we can obtain an analogue of Proposition \ref{Surjectivity of a multilinear form with high tensor rank} for polynomials.

\begin{conjecture}\label{Range of polynomials} Let $p$ be a prime, let $d \ge 2$ be a positive integer, and let $S$ be a non-empty subset of $\F_p$. Then there exists a positive integer $R(p,d)$ such that if $P: \F_p^n \rightarrow \F_p$ is a polynomial and the restriction of $P$ to $S^n$ is not surjective, then there exists a polynomial $P_0: \F_p^n \rightarrow \F_p$ such that $P_0(S^n) = \{0\}$ and we can write \[P - P_0 = \sum_{i=1}^{R(p,d)} l_{i,1} \dots l_{i,d}\] for some linear forms $l_{i,j}: \F_p^n \rightarrow \F_p$. \end{conjecture}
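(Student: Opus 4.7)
The plan is to combine Theorem~\ref{Main theorem} with a homogenization argument and Proposition~\ref{Equivalence of tensor and partition rank} to convert a bound on the $S^n$-rank of $P$ into the required Waring-type decomposition. First, if $P|_{S^n}$ avoids some value $c \in \F_p$, then the probability that $P(x)=c$ for $x$ uniform in $S^n$ is $0$, which differs from $p^{-1}$ by $p^{-1}$, so by the character-sum identity at the start of the paper there is some $t \in \F_p^*$ with $|\bias_{t,S} P| \geq p^{-1}$. Applying Theorem~\ref{Main theorem} produces a polynomial $P_1: \F_p^n \to \F_p$ vanishing on $S^n$ with $\rk(P-P_1) \leq H_{p,d,S}(p^{-1}) =: k_0$. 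Writing $P - P_1 = \sum_{i=1}^{k_0} Q_i R_i$ with $\deg Q_i, \deg R_i < d$ and $\deg Q_i + \deg R_i \leq d$ then gives structural control on $P-P_1$.

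Next I would homogenize: introduce a new variable $x_0$ and let $\tilde P(x_0, x) = x_0^d (P-P_1)(x/x_0)$, a homogeneous polynomial of degree $d$ in $n+1$ variables. Homogenizing the above decomposition gives $\tilde P = \sum_{i=1}^{k_0} x_0^{d - \deg Q_i - \deg R_i}\, \tilde Q_i \tilde R_i$, expressing $\tilde P$ as a sum of $k_0$ products of homogeneous polynomials of strictly smaller degree. Because $d < p$, standard polarization produces a unique symmetric $d$-linear form $m$ on $\F_p^{n+1}$ with $\tilde P(y) = m(y,\dots,y)$, and symmetrizing the decomposition shows $\pr m \leq k_0 \cdot d!$.

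The structural extraction step is then Proposition~\ref{Equivalence of tensor and partition rank}, which (granting its slice hypotheses, discussed below) upgrades the bound on $\pr m$ to a bound $\tr m \leq \Xi(p,d,|S|)$. Writing $m = \sum_{j=1}^{\Xi} a_{j,1} \otimes \dots \otimes a_{j,d}$ with $a_{j,r} \in \F_p^{n+1}$, evaluating on the diagonal, and dehomogenizing by setting $x_0 = 1$ yields
\[P(x) - P_1(x) = \sum_{j=1}^{\Xi} l_{j,1}(x) \cdots l_{j,d}(x),\]
where each $l_{j,r}(x) = a_{j,r} \cdot (1, x_1, \dots, x_n)$ is an affine linear form. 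Taking $P_0 = P_1$ gives the conclusion of the conjecture.

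The main obstacle is verifying the slice-partition-rank hypothesis of Proposition~\ref{Equivalence of tensor and partition rank}: in general, slicing a tensor can increase its partition rank, so the bound $\pr m \leq k_0 \cdot d!$ does not automatically control the partition ranks of the $|I|$-order slices of $m$ for $2 \leq |I| \leq d-1$. To address this I would run a nested induction on $d$ and on the slice order, exploiting the symmetry of $m$: a slice at a point $z$ corresponds, via polarization, to a polynomial obtained by partial evaluation of $\tilde P$ in a few directions, so Theorem~\ref{Disjoint rank minors theorem} combined with the disjoint-minor arguments of Section~\ref{Section: The general polynomial case} should allow us either to propagate the rank bound down to the slice or to extract a high-rank disjoint minor which, via Proposition~\ref{Surjectivity of a multilinear form with high tensor rank}, forces surjectivity of $m$ on $(S^{n+1})^d$, contradicting (after dehomogenization) the original non-surjectivity of $P$ on $S^n$. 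A secondary cosmetic concern is that the bound obtained is $R(p,d,S)$ rather than the cleaner $R(p,d)$ stated in the conjecture; removing the $|S|$-dependence would likely require the sharper bias-to-rank bounds of Conjecture~\ref{Bounds for polynomials}.
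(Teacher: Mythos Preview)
The statement you are attempting to prove is Conjecture~\ref{Range of polynomials}, which the paper lists in its open-problems section and does \emph{not} prove; the paper only remarks that a decoupling strategy reducing it to Proposition~\ref{Surjectivity of a multilinear form with high tensor rank} ``is conceivable''. So there is no paper proof to compare against, and the question is whether your outline actually closes the problem. It does not.

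The first half of your argument is fine: non-surjectivity gives bias at least $p^{-1}$, and Theorem~\ref{Main theorem} yields $P_1$ vanishing on $S^n$ with $\rk_d(P-P_1)$ bounded, hence after homogenization the symmetric $d$-linear form $m$ has bounded partition rank. The gap is exactly where you locate it, and your proposed fix does not work. To invoke Proposition~\ref{Equivalence of tensor and partition rank} you must bound the partition rank of \emph{every} slice $m_y$ for $y$ ranging over all of $\prod_{\alpha\in I^c}[n_\alpha]$, and a global bound on $\pr m$ gives no such control (a rank-$1$ tensor $u\otimes T'$ can have slices of arbitrarily large partition rank). Your suggested escape route is to say that if some slice has large partition rank, then via Proposition~\ref{Surjectivity of a multilinear form with high tensor rank} the form $m$ is surjective on $(S^{n+1})^d$, contradicting non-surjectivity of $P$ on $S^n$. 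But this implication is false: Proposition~\ref{Surjectivity of a multilinear form with high tensor rank} gives surjectivity of the \emph{multilinear} map $(y_1,\dots,y_d)\mapsto m(y_1,\dots,y_d)$ on the \emph{product} $S^{n+1}\times\dots\times S^{n+1}$, whereas $P(x)=m((1,x),\dots,(1,x))$ is the restriction of $m$ to a very thin diagonal (with first coordinate pinned to $1$, which need not even lie in $S$). Surjectivity on the product says nothing about surjectivity on that diagonal, so no contradiction results. This is precisely the passage from multilinear to polynomial that the paper flags as the missing ingredient, and your sketch does not supply it.

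There are also smaller issues that would need attention even if the main gap were filled: the $l_{i,j}$ you obtain are affine rather than linear; Theorem~\ref{Main theorem} carries the hypothesis $d<p$ which the conjecture does not; and the bound $H_{p,d,S}$ genuinely depends on $S$, so the resulting $R$ would be $R(p,d,S)$ rather than $R(p,d)$, as you note.
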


Proposition \ref{Decoupling inequality for polynomials} no longer seems to be of any help for the task of proving Conjecture \ref{Range of polynomials}, but it is conceivable that some similar decoupling strategy could work to reduce Conjecture \ref{Range of polynomials} to Proposition \ref{Surjectivity of a multilinear form with high tensor rank}.

\end{document}